\numberwithin{equation}{section}
\newtheorem{theorem}{{\sc Theorem}}[section]
\newtheorem{cor}[theorem]{{\sc Corollary}}
\newtheorem{lemma}[theorem]{{\sc Lemma}}
\theoremstyle{remark}
\newtheorem{remark}[theorem]{{\sc Remark}}
\newtheorem{assumption}[theorem]{\sc Assumption}
\theoremstyle{definition}
\newtheorem{example}[theorem]{\sc example}
\newcommand{\R}{\mathbb{R} }
\newcommand{\N}{\mathbb{N} }
\newcommand{\B}{\mathcal{B}}
\newcommand{\D}{\mathcal{D}}
\newcommand{\W}{\mathcal{W}}
\newcommand{\K}{\mathcal{K}}
\newcommand{\Z}{\mathbb{Z}}
\newcommand{\calH}{\mathcal{H}}
\newcommand{\calL}{\mathcal{L}}
\newcommand{\Vtilde}{\tilde{V}}
\providecommand{\abs}[1]{\lvert #1\rvert}
\providecommand{\babs}[1]{\bigl\lvert #1\bigr\rvert}
\providecommand{\Babs}[1]{\Bigl\lvert #1\Bigr\rvert}
\providecommand{\fnorm}[1]{\lVert #1\rVert_\infty}
\providecommand{\pnorm}[1]{\lVert #1\rVert_p}
\providecommand{\einsnorm}[1]{\lVert #1\rVert_1}
\DeclareMathOperator{\Var}{Var}
\DeclareMathOperator{\Poi}{Poisson}
\DeclareMathOperator{\Bin}{Bin}
\DeclareMathOperator{\Hyp}{Hyp}
\DeclareMathOperator{\Bern}{Bernoulli}
\DeclareMathOperator{\Lip}{Lip}
\renewcommand{\phi}{\varphi}
\renewcommand{\epsilon}{\varepsilon}
\newcommand{\eps}{\varepsilon}
\renewcommand{\rho}{\varrho}
\begin{document}
\title[Error bounds in the CLT for random sums]{New Berry-Esseen and Wasserstein bounds in the CLT for non-randomly centered random sums by probabilistic methods}
\author{Christian D\"obler}
\thanks{Universit\'{e} du Luxembourg, Unit\'{e} de Recherche en Math\'{e}matiques \\
christian.doebler@uni.lu\\
{\it Keywords:} random sums, central limit theorem, Kolmogorov distance, Wasserstein distance, Stein's method, zero bias couplings, size bias couplings}
\begin{abstract}
We prove abstract bounds on the Wasserstein and Kolmogorov distances between non-randomly centered random sums of real i.i.d. random variables with a finite third moment and the standard normal distribution. 
Except for the case of mean zero summands, these bounds involve a coupling of the summation index with its size biased distribution as was previously considered in \cite{GolRin96} for the normal approximation of nonnegative random variables. 
When being specialized to concrete distributions of the summation index like the Binomial, Poisson and Hypergeometric distribution, our bounds turn out to be of the correct order of magnitude.  
\end{abstract}

\maketitle

\section{Introduction}\label{Intro}
Let $N,X_1,X_2,\dotsc$ be random variables on a common probability space such that the $X_j$, $j\geq1$, are real-valued and $N$ assumes values in the set of nonnegative 
integers $\Z_+=\{0,1,\dotsc\}$. Then, the random variable 
\begin{equation}\label{defrs}
S:=\sum_{j=1}^N X_j
\end{equation}
is called a \textit{random sum}. Such random variables appear frequently in modern probabiliy theory, as many models for example from physics, finance, reliability and risk theory naturally lead to the consideration of such sums. Furthermore, sometimes a model, which looks quite different from \eqref{defrs} at the outset, may be transformed into a random sum and then general theory of such sums may be invoked to study the original model \cite{GneKo96}.
For example, by the recent so-called \textit{master Steiner formula} from \cite{McCTr14} the distribution of the metric projection of a standard Gaussian vector onto a closed convex cone in Euclidean space can be represented as a random sum of i.i.d. centered chi-squared random variables with the distribution of $N$ given by the \textit{conic intrinsic volumes} of the cone. Hence, this distribution belongs to the class of the so-called chi-bar-square distributions, which is ubiquitous in the theory of hypotheses testing with inequality constraints (see e.g. \cite{Dyk91} and \cite{Sha88}). This representation was used in \cite{GNP14} to prove quantitative CLTs for both the distribution of the metric projection and the conic intrinsic volume distribution. These results are of interest e.g. in the field of compressed sensing.\\
There already exists a huge body of literature about the asymptotic distributions of random sums. Their investigation evidently began with the work \cite{Rob48} of Robbins, who assumes that the random variables $X_1,X_2,\dotsc$ are i.i.d. with a finite second moment and that $N$ also has a finite second moment. One of the results of \cite{Rob48} is that under these assumptions asymptotic normality of the index $N$ automatically implies asymptotic normality of the corresponding random sum. The book \cite{GneKo96} gives a comprehensive description of the limiting behaviour of such random sums under the assumption that the random variables $N,X_1,X_2,\dotsc$ are independent. In particular, one may ask under what conditions the sum $S$ in \eqref{defrs} is asymptotically normal, where asymptotically refers to the fact that 
the \textit{random index} $N$ in fact usually depends on a parameter, which is sent either to infinity or to zero. Once a CLT is known to hold, one might ask about the accuracy of the normal approximation to the distribution of the given random sum. It turns out that it is generally much easier to derive rates of convergence for random sums of centered random variables, or, which amounts to the same thing, for random sums centered by random variables than for random sums of not necessarily centered random variables. In the centered case one might, for instance, first condition on the value of the index $N$, then use known error bounds for sums of a fixed number of independent random variables like the classical Berry-Esseen theorem and, finally, take expectation with respect to $N$. This technique is illustrated e.g. in the manuscript \cite{Doe12rs} and also works for non-normal limiting distributions like the Laplace distribution. For this reason we will mainly be interested in deriving sharp rates of convergence for the case of non-centered summands, but will also consider the mean-zero case and hint at the relevant differences. Also, we will not assume from the outset that the index $N$ has a 
certain fixed distribution like the Binomial or the Poisson, but will be interested in the general situation.\\
For non-centered summands and general index $N$, the relevant literature on rates of convergence in the random sums CLT seems quite easy to survey. Under the same assumptions as in \cite{Rob48} the paper \cite{Eng83} gives an upper bound on the Kolmogorov distance between the distribution of the random sum and a suitable normal distribution, which is proved to be sharp in some sense. However, this bound is not very explicit as it contains the Kolmogorov distance of $N$ to the normal distribution with the same mean and variance as $N$ as one of the terms appearing in the bound, for instance. This might make it difficult to apply this result to a concrete distribution of $N$. Furthermore, the method of proof cannot be easily adapted to probability metrics different from the Kolmogorov distance like e.g. the Wasserstein distance. In \cite{Kor87} a bound on the Kolmogorov distance is given which improves upon the result of \cite{Eng83} with respect to the constants appearing in the bound. However, the bound given in \cite{Kor87} is no longer strong enough to assure the well-known asymptotic normality of Binomial and Poisson random sums, unless the summands are centered. The paper \cite{Kor88} generalizes the results from \cite{Eng83} to the case of not necessarily identically distributed summands and to situations, where the summands might not have finite absolute third moments. However, at least for non-centered summands, the bounds in \cite{Kor88} still lack some explicitness. \\
To the best of our knowledge, the article \cite{Sunk13} is the only one, which gives bounds on the Wasserstein distance between random sums for general indices $N$ and the standard normal distribution. However, as mentioned by the same author in \cite{Sunk14}, the results of \cite{Sunk13} generally do not yield accurate bounds, unless the summands are centered. Indeed, the results from \cite{Sunk13} do not even yield convergence in distribution for Binomial or Poisson random sums of non-centered summands.\\ 
The main purpose of the present article is to combine Stein's method of normal approximation with several modern probabilistic concepts like certain coupling constructions and conditional independence, to prove accurate abstract upper bounds on the distance between suitably standardized random sums of i.i.d. summands measured by two popular probability metrics, the Kolmogorov and Wasserstein distances. Using a simple inequality, this gives bounds for the whole classe of $L^p$ distances of distributions, $1\leq p\leq\infty$. These upper bounds, in their most abstract forms (see Theorem \ref{maintheo} below), 
involve moments of the difference of a coupling of $N$ with its \textit{size-biased distribution} but reduce to very explicit expressions if either $N$ has a concrete distribution like the Binomial, Poisson or dirac delta distribution, the summands $X_j$ are centered, or, if the distribution of $N$ is infinitely divisible. These special cases are extensively presented in order to illustrate the wide applicability and strength of our results. As indicated above, this seems to be the first work which gives Wasserstein bounds in the random sums CLT for general indices $N$, which reduce to bounds of optimal order, when being specialized to concrete distributions like the Binomial and the Poisson distributions. Using our abstract approach via size-bias couplings, we are also able to prove rates for Hypergeometric random sums. These do not seem to have been treated in the literature, yet. This is not a surprise, because the Hypergeometric distribution is conceptually more complicated than the Binomial or Poisson distribution, as it is neither a natural convolution of i.i.d. random variables nor infinitely divisible. Indeed, every distribution of the summation index which allows for a close size-bias coupling should be amenable to our approach.\\ 
It should be mentioned that Stein's method and coupling techniques have previously been used to bound the error of exponential approximation \cite{PekRol11} and approximation by the Laplace distribution \cite{PiRen12} of certain random sums. In these papers, the authors make use of the fact that the exponential distribution and the Laplace distribution are the unique fixed points of certain distributional transformations and are able to succesfully couple the given random sum with a random variable having the respective transformed distribution. In the case of the standard normal distribution, which is a fixed point of the zero-bias transformation from \cite{GolRei97}, it appears 
natural to try to construct a close coupling with the zero biased distribution of the random sum under consideration. However, interestingly it turns out that we are only able to do so in the case of centered summands whereas for the general case an intermediate step involving a coupling of the index $N$ with its size biased distribution is required for the proof. Nevertheless, the zero-bias transformation or rather an extension of it to non-centered random variables, plays an important role for our argument. This combination of two coupling constructions which belong to the classical tools of Stein's method for normal approximation is a new feature lying at the heart of our approach. \\
The remainder of the article is structured as follows: In Section \ref{results} we review the relevant probability distances, the size biased distribution and state our quantitative results on the normal approximation of random sums. 
Furthermore, we prove new identities for the distance of a nonnegative random variable to its size-biased distribution in three prominent metrics and show that for some concrete distributions, natural couplings are $L^1$-optimal and, hence, yield the Wasserstein distance. In Section \ref{stein} we collect necessary facts from Stein's method of normal approximation and introduce a variant of the zero-bias transformation, which we need for the proofs of our results. Then, in Section \ref{proof}, the proof of our main theorems, Theorem \ref{maintheo} and Theorem \ref{meanzero} is given. Finally, Section \ref{Appendix} contains the proofs of some auxiliary results, needed for the proof of the Berry-Esseen bounds in Section \ref{proof}.

\section{Main results}\label{results}
Recall that for probability measures $\mu$ and $\nu$ on $(\R,\B(\R))$, their \textit{Kolmogorov distance} is defined by
\begin{equation*}
d_\K(\mu,\nu):=\sup_{z\in\R}\babs{\mu\bigl((-\infty,z]\bigr)- \mu\bigl((-\infty,z]\bigr) }=\fnorm{F-G}\,,
\end{equation*}
where $F$ and $G$ are the distribution functions corresponding to $\mu$ and $\nu$, respectively. Also, if both $\mu$ and $\nu$ have finite first absolute moment, then one defines the \textit{Wasserstein distance} between them via 
\begin{equation*}
d_\W(\mu,\nu):=\sup_{h\in\Lip(1)}\Babs{\int h d\mu -\int h d\nu}\,, 
\end{equation*}
where $\Lip(1)$ denotes the class of all Lipschitz-continous functions $g$ on $\R$ with Lipschitz constant not greater than $1$. In view of Lemma \ref{distlemma} below, we also introduce the \textit{total variation distance} bewtween 
$\mu$ and $\nu$ by
\begin{equation*}
d_{TV}(\mu,\nu):=\sup_{B\in\B(\R)}\babs{\mu(B)-\nu(B)}\,.
\end{equation*}
If the real-valued random variables $X$ and $Y$ have distributions $\mu$ and $\nu$, respectively, then 
we simply write $d_\K(X,Y)$ for $d_\K\bigl(\calL(X),\calL(Y)\bigr)$ and similarly for the Wasserstein and total variation distances and also speak of the respective distance between the random variables $X$ and $Y$. 
Before stating our results, we have to review 
the concept of the size-biased distribution corresponding to a distribution supported on $[0,\infty)$. 
Thus, if $X$ is a nonnegative random variable with $0<E[X]<\infty$, then a random variable $X^s$ is said to have the 
\textit{$X$-size biased distribution}, if for all bounded and measurable functions $h$ on $[0,\infty)$ 
\begin{equation}\label{sbdef}
E[Xh(X)]=E[X]E[h(X^s)]\,,
\end{equation}   
see, e.g. \cite{GolRin96}, \cite{ArrGol} or \cite{AGK}. Equivalently, the distribution of $X^s$ has Radon-Nikodym derivative with respect to the distribution of $X$ given by
\begin{equation*}
\frac{P(X^s\in dx)}{P(X\in dx)}=\frac{x}{E[X]}\,,
\end{equation*}
which immediately implies both existence and uniqueness of the $X$-size biased distribution. Also note that \eqref{sbdef} holds true for all measurable functions 
$h$ for which $E\abs{Xh(X)}<\infty$. 
In consequence, if $X\in L^p(P)$ for some $1\leq p<\infty$, then $X^s\in L^{p-1}(P)$ and 
\begin{equation*}
 E\Bigl[\bigl(X^s\bigr)^{p-1}\Bigr]=\frac{E\big[X^p\bigr]}{E[X]}\,.
\end{equation*}

The following lemma, which seems to be new and might be of independent interest, gives identities for the distance of $X$ to $X^s$ in the three metrics mentioned above. The proof is deferred to the end of this section.
\begin{lemma}\label{distlemma}
Let $X$ be a nonnegative random variable such that $0<E[X]<\infty$. Then, the following identities hold true: 
\begin{enumerate}[{\normalfont (a)}]
\item $\displaystyle d_\K(X,X^s)=d_{TV}(X,X^s)=\frac{E\abs{X-E[X]}}{2E[X]}$.
\item If additionally $E[X^2]<\infty$, then $\displaystyle d_\W(X,X^s)=\frac{\Var(X)}{E[X]}$.
\end{enumerate}
\end{lemma}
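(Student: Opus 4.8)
The plan is to work throughout with $\mu:=E[X]$ and the Radon--Nikodym identity $P(X^s\in dx)=(x/\mu)\,P(X\in dx)$ recalled above, writing $F$ and $G$ for the distribution functions of $X$ and $X^s$. This yields, for every $z\in\R$,
\begin{equation*}
F(z)-G(z)=\int_{[0,z]}\Bigl(1-\frac{x}{\mu}\Bigr)P(X\in dx)=\int_{(z,\infty)}\Bigl(\frac{x}{\mu}-1\Bigr)P(X\in dx),
\end{equation*}
the second equality because $\int_{[0,\infty)}(x/\mu-1)\,P(X\in dx)=0$. Splitting into the cases $z\le\mu$ (use the first form, whose integrand is nonnegative on $[0,z]$) and $z\ge\mu$ (use the second form, whose integrand is nonnegative on $(z,\infty)$) shows $G(z)\le F(z)$ for every $z$; that is, $X^s$ stochastically dominates $X$. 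I will use this monotonicity in both parts.

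For part (a), I would first invoke the elementary formula expressing the total variation distance of two probability measures as one half the $L^1$-norm of the difference of their densities with respect to any common dominating measure; taking $P(X\in dx)$ itself as the dominating measure, so that the two densities are $1$ and $x/\mu$, this gives at once
\begin{equation*}
d_{TV}(X,X^s)=\frac12\int\Babs{1-\frac{x}{\mu}}\,P(X\in dx)=\frac{E\abs{X-\mu}}{2\mu}.
\end{equation*}
On the other hand, evaluating the distribution functions at the single point $z=\mu$ and using $E[X]=\mu$,
\begin{equation*}
F(\mu)-G(\mu)=\int_{[0,\mu]}\Bigl(1-\frac{x}{\mu}\Bigr)P(X\in dx)=\frac{1}{\mu}E\bigl[(\mu-X)^+\bigr]=\frac{E\abs{X-\mu}}{2\mu},
\end{equation*}
the last equality because the centering forces $E[(\mu-X)^+]=E[(X-\mu)^+]=\tfrac12 E\abs{X-\mu}$. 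Since trivially $F(\mu)-G(\mu)\le d_\K(X,X^s)$ while always $d_\K\le d_{TV}$, these two displays sandwich the chain $d_{TV}(X,X^s)=F(\mu)-G(\mu)\le d_\K(X,X^s)\le d_{TV}(X,X^s)$, which is (a).

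For part (b), the plan is to use the classical one-dimensional representation $d_\W(X,X^s)=\int_\R\abs{F(z)-G(z)}\,dz$, legitimate because both laws have finite first moment (here one uses $E[X^2]<\infty$, equivalently $E[X^s]=E[X^2]/E[X]<\infty$). By the stochastic ordering established above the integrand equals $F(z)-G(z)\ge0$, and both distribution functions vanish on $(-\infty,0)$, so
\begin{align*}
d_\W(X,X^s)&=\int_0^\infty\bigl[(1-G(z))-(1-F(z))\bigr]\,dz\\
&=\int_0^\infty\bigl[P(X^s>z)-P(X>z)\bigr]\,dz=E[X^s]-E[X].
\end{align*}
Inserting the size-bias moment identity $E[X^s]=E[X^2]/E[X]$ recalled before the lemma gives $d_\W(X,X^s)=E[X^2]/E[X]-E[X]=\Var(X)/E[X]$, which is (b).

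None of the individual steps is hard; the real content is just the two classical facts invoked --- the density formula for $d_{TV}$ and the $L^1$-formula for $d_\W$ --- together with the stochastic domination of $X$ by $X^s$, which is what both pins $d_\K(X,X^s)$ to the value $F(\mu)-G(\mu)$ in (a) and removes the absolute value from the Wasserstein integral in (b). The only place demanding a little care is the bookkeeping behind $E[(\mu-X)^+]=\tfrac12 E\abs{X-\mu}$ and the observation that in (a) one needs the value of $\abs{F-G}$ only at the single point $z=\mu$, not its global maximum.
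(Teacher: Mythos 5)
Your proposal is correct, but it takes a genuinely different route from the paper. The paper works throughout with the dual (test-function) characterizations of the three metrics: it writes $\abs{E[h(X^s)]-E[h(X)]}=\frac{1}{E[X]}\abs{E[(X-E[X])h(X)]}$ from the size-bias identity and then exhibits extremal test functions -- the function $\tfrac12(1_{\{x>E[X]\}}-1_{\{x\le E[X]\}})$ for the total variation part, an analysis of the indicators $1_{(t,\infty)}$ showing that the supremum of $P(X^s>t)-P(X>t)$ is attained at $t=E[X]$ for the Kolmogorov part, and $h(x)=x-E[X]$ for the Wasserstein part. You instead argue on the level of densities and distribution functions: Scheff\'e's identity $d_{TV}=\tfrac12\int\abs{1-x/\mu}\,P(X\in dx)$ gives the total variation value at once; the sandwich $F(\mu)-G(\mu)\le d_\K\le d_{TV}$, with both ends computed to be $E\abs{X-\mu}/(2\mu)$, settles the Kolmogorov identity without ever having to locate the maximizer of $F-G$ (the paper's monotonicity argument for the supremum over $t$ is exactly what you bypass); and the $L^1$-of-CDFs representation of $d_\W$ combined with the stochastic dominance $G\le F$ turns part (b) into the computation $E[X^s]-E[X]=\Var(X)/E[X]$. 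The last step is in spirit the computation the paper carries out in Remark 2.2(a) via the optimal monotone coupling, but there the paper invokes the lemma itself, whereas you derive it independently, so there is no circularity. What each approach buys: yours is shorter and makes the role of stochastic dominance transparent (it simultaneously pins $d_\K$ and removes the absolute value in the Wasserstein integral), at the price of importing two classical representation theorems; the paper's proof is more self-contained, needing only the defining relation of the size-biased distribution, and it produces the extremal test functions explicitly, which is in keeping with the Stein-method machinery used in the rest of the article.
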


\begin{remark}\label{sbrem}
\begin{enumerate}[(a)]
 \item It is well known (see e.g. \cite{Dud02}) that the Wasserstein distance $d_\W(X,Y)$ between the real random variables $X$ and $Y$ has the dual representation 
\begin{equation}\label{dualwas}
d_\W(X,Y)=\inf_{(\hat{X},\hat{Y})\in\pi(X,Y)}E\abs{\hat{X}-\hat{Y}}\,,
\end{equation}
where $\pi(X,Y)$ is the collection of all couplings of $X$ and $Y$, i.e. of all pairs $(\hat{X},\hat{Y})$ of random variables on a joint probability space 
such that $\hat{X}\stackrel{\D}{=}X$ and $\hat{Y}\stackrel{\D}{=}Y$. Also, the infimum in \eqref{dualwas} is always attained, e.g. by the quantile transformation: 
If $U$ is uniformly distributed on $(0,1)$ and if, for a distribution function $F$ on $\R$, we let 
\[F^{-1}(p):=\inf\{x\in\R\,:\,F(x)\geq p\},\quad p\in(0,1)\,,\]
denote the corresponding \textit{generalized inverse} of $F$, then $F^{-1}(U)$ is a random variable with distribution function $F$. Thus, letting 
$F_X$ and $F_Y$ denote the distribution functions of $X$ and $Y$, respectively, it was proved e.g. in \cite{Major78} that 
\[\inf_{(\hat{X},\hat{Y})\in\pi(X,Y)}E\abs{\hat{X}-\hat{Y}}=E\abs{F_X^{-1}(U)-F_Y^{-1}(U)}=\int_0^1\abs{F_X^{-1}(t)-F_Y^{-1}(t)}dt\,.\]
Furthermore, it is not difficult to see that $X^s$ is always stochastically larger than $X$, implying that there is a coupling $(\hat{X},\hat{X^s})$ of $X$ and $X^s$ such that $\hat{X^s}\geq \hat{X}$ (see \cite{ArrGol} for details). 
In fact, this property is already achieved by the coupling via the quantile transformation. By the dual representation \eqref{dualwas} and the fact that the coupling via the quantile transformation 
yields the minimum $L^1$ distance in \eqref{dualwas} we can conclude that \textit{every} coupling $(\hat{X},\hat{X^s})$ such that $\hat{X^s}\geq \hat{X}$ is optimal in this sense, since 
\begin{align*}
 E\babs{\hat{X^s}-\hat{X}}&=E\bigl[\hat{X^s}\bigr]-E\bigl[\hat{X}\bigr]=E\bigl[F_{X^s}^{-1}(U)\bigr]-E\bigl[F_X^{-1}(U)\bigr]\\
 &=E\babs{F_{X^s}^{-1}(U)-F_X^{-1}(U)}=d_\W(X,X^s)\,.
\end{align*}
Note also that, by the last computation and part by (b) of Lemma \ref{distlemma}, we have 
\begin{equation*}
 E\bigl[X^s\bigr]-E\bigl[X\bigr]=E\bigl[\hat{X^s}\bigr]-E\bigl[\hat{X}\bigr]=d_\W(X,X^s)=\frac{\Var(X)}{E[X]}\,.
\end{equation*}

\item Due to a result by Steutel \cite{Steu73}, the distribution of $X$ is infinitely divisible, if and only if there exists a coupling $(X,X^s)$ of $X$ and $X^s$ such that $X^s-X$ is nonnegative and independent of $X$ (see e.g. \cite{ArrGol} for a nice exposition and a proof of this result). 
According to (a) such a coupling always achieves the minimum $L^1$-distance.
\item It might seem curious that according to part (a) of Lemma \ref{distlemma}, the Kolmogorov distance and the total variation distance between a nonnegative random variable and one with its size biased distribution 
always coincide. Indeed, this  holds true since for each Borel-measurable set $B\subseteq \R$ we have the inequality
\begin{align*}
 \babs{P(X^s\in B)-P(X\in B)}&\leq\babs{P(X^s> m)-P(X>m)}\\
 &\leq d_\K(X,X^s)\,,
\end{align*}
where $m:=E[X]$.
Thus, the supremum in the definition 
\begin{equation*}
 d_{TV}(X,X^s)=\sup_{B\in\B(\R)}\babs{P(X^s\in B)-P(X\in B)}
\end{equation*}
of the total variation distance is assumed for the set $B=(m,\infty)$. This can be shortly proved and explained in the following way: For $t\in\R$, using the defining property \eqref{sbdef} of the size biased distribution, 
we can write 
\begin{equation*}
 H(t):=P(X^s\leq t)-P(X\leq t)=m^{-1}E\bigl[(X-m))1_{\{X\leq t\}}\bigr]\,.
\end{equation*}
Thus, for $s<t$ we have  
\begin{equation*}
H(t)-H(s)=m^{-1} E\bigl[(X-m))1_{\{s<X\leq t\}}\bigr]\,,
\end{equation*}
and, hence, $H$ is decreasing on $(-\infty,m)$ and increasing on $(m,\infty)$. Thus, for every Borel set $B\subseteq\R$ we conclude that 
\begin{align*}
 &\; P(X^s\in B)-P(X\in B)=\int_\R 1_B(t)dH(t)\leq \int_\R 1_{B\cap (m,\infty)}(t)dH(t)\\
 &\leq \int_\R 1_{(m,\infty)}(t)dH(t)=P(X^s>m)-P(X>m)\,.
\end{align*}
Note that for this argumentation we heavily relied on the defining property \eqref{sbdef} of the size biased distribution which guaranteed the monotonicity property of the difference $H$ of 
the distribution functions of $X^s$ and $X$, respectively. Since $X^s$ is stochastically larger than $X$, one might suspect that the coincidence of the total variation and the Kolmogorov distance 
holds true in this more general situation. However, observe that the fact that $X^s$ dominates $X$ stochastically only implies that $H\leq0$ but that it is the monotonicity of $H$ on 
$(-\infty,m)$ and on $(m,\infty)$ that was crucial for the derivation. 
\end{enumerate}
\end{remark}

\begin{example}\label{exdist}
\begin{enumerate}[(a)]
\item Let $X\sim\Poi(\lambda)$ have the Poisson distribution with paramter $\lambda>0$. From the Stein characterization of $\Poi(\lambda)$ (see \cite{Ch75}) it is known that 
\begin{equation*}
E[Xf(X)]=\lambda E[f(X+1)]=E[X]E[f(X+1)]
\end{equation*}
for all bounded and measurable $f$. Hence, $X+1$ has the $X$-size biased distribution. As $X+1\geq X$, by Remark \ref{sbrem} this coupling yields the minimum $L^1$-distance between $X$ and $X^s$, which is equal to $1$ in this case. 
\item Let $n$ be a positive integer, $p\in(0,1]$ and let $X_1,\dotsc,X_n$ be i.i.d. random variables such that $X_1\sim\Bern(p)$. 
Then, 
\[X:=\sum_{j=1}^n X_j\sim\Bin(n,p)\]
 has the Binomial distribution with parameters $n$ and $p$.
From the construction in \cite{GolRin96} one easily sees that 
\[X^s:=\sum_{j=2}^{n}X_j+1\]
has the $X$-size biased distribution. As $X^s\geq X$, by Remark \ref{sbrem} this coupling yields the minimum $L^1$-distance between $X$ and $X^s$, which is equal to
\begin{equation*}
 d_\W(X,X^s)=E[1-X_1]=1-p=\frac{\Var(X)}{E[X]}
\end{equation*}
in accordance with Lemma \ref{distlemma}.
\item Let $n,r,s$ be positive integers such that $n\leq r+s$ and let $X\sim\Hyp(n;r,s)$ have the Hypergeometric distribution with parameters $n,r$ and $s$, i.e.
\begin{equation*}
P(X=k)=\frac{\binom{r}{k}\binom{s}{n-k}}{\binom{r+s}{n}}\,,\quad k=0,1,\dotsc,n 
\end{equation*}
with $E[X]=\frac{nr}{r+s}$.
Imagaine an urn with $r$ red and $s$ silver balls. If we draw $n$ times without replacement from this urn and denote by $X$ the total number of drawn red balls, then $X\sim\Hyp(n;r,s)$. For $j=1,\dotsc,n$ denote by $X_j$ the indicator of the event that a red ball is drawn at the $j$-th draw. Then, $X=\sum_{j=1}^n X_j$ and since the $X_j$ are exchangeable, the well-known construction of a random variable $X^s$ wth the $X$-size biased 
distribution from \cite{GolRin96} gives that $X^s=1+\sum_{j=2}^n X_j'$, where 
\begin{equation*}
 \calL\bigl((X_2',\dotsc,X_n')\bigr)=\calL\bigl((X_2,\dotsc,X_n)\,\bigl|\,X_1=1\bigr)\,.
\end{equation*}
But given $X_1=1$ the sum $\sum_{j=2}^n X_j$ has the Hypergeometric distribution with parameters $n-1,r-1$ and $s$ and, hence,  
\begin{equation*}
 X^s\stackrel{\D}{=}Y+1\,,\quad\text{where } Y\sim\Hyp(n-1;r-1,s)\,.
\end{equation*}
In order to construct an $L^1$-optimal coupling of $X$ and $X^s$, fix one of the red balls in the urn and, for $j=2,\dotsc,n$, denote by $Y_j$ the indicator of the event that at the $j$-th draw this 
fixed red ball is drawn. Then, it is not difficult to see that 
\begin{align*}
 Y&:=1_{\{X_1=1\}}\sum_{j=2}^n X_j+1_{\{X_1=0\}}\sum_{j=2}^n(X_j-Y_j)\sim\Hyp(n-1;r-1,s)
\end{align*}
and, hence, 
\begin{align*}
X^s&:=Y+1=1_{\{X_1=1\}}\sum_{j=2}^n X_j+1_{\{X_1=0\}}\sum_{j=2}^n(X_j-Y_j)+1\\
&= 1_{\{X_1=1\}}X+1_{\{X_1=0\}}\Bigl(X+1-\sum_{j=2}^n Y_j\Bigr)
\end{align*}
has the $X$-size biased distribution. Note that since $\sum_{j=2}^n Y_j\leq 1$ we have 
\begin{equation*}
 X^s-X=1_{\{X_1=0\}}\Bigl(1-\sum_{j=2}^n Y_j\Bigr)\geq0\,,
\end{equation*}
and consequently, by Remark \ref{sbrem} (a), the coupling $(X,X^s)$ is optimal in the $L^1$-sense and yields the Wasserstein distance between $X$ and $X^s$:
\begin{equation*}
d_\W(X,X^s)=E\babs{X^s-X}=\frac{\Var(X)}{E[X]}=\frac{n\frac{r}{r+s}\frac{s}{r+s}\frac{r+s-n}{r+s-1}}{\frac{nr}{r+s}}=\frac{s(r+s-n)}{(r+s)(r+s-1)}\,. 
\end{equation*}
\end{enumerate}
\end{example}

We now turn back to the asymptotic behaviour of random sums.
We will rely on the following general assumptions and notation, which we adopt and extend from \cite{Rob48}.
\begin{assumption}\label{genass}
The random variables $N,X_1,X_2,\dotsc$ are independent, $X_1,X_2,\dotsc$ being i.i.d. and such that $E\abs{X_1}^3<\infty$ and $E[N^3]<\infty$. 
Furthermore, we let 
\begin{align*}
\alpha&:=E[N],\quad \beta^2:=E[N^2],\quad\gamma^2:=\Var(N)=\beta^2-\alpha^2,\quad \delta^3:=E[N^3],\\
a&:=E[X_1],\quad b^2:=E[X_1^2],\quad c^2:=\Var(X_1)=b^2-a^2\text{ and } d^3:=E\babs{X_1-E[X_1]}^3.
\end{align*}
\end{assumption}
By Wald' s equation and the Blackwell-Girshick formula, from Assumption \ref{genass} we have
\begin{equation}\label{meanvar}
\mu:=E[S]=\alpha a\quad\text{and}\quad\sigma^2:=\Var(S)=\alpha c^2+a^2\gamma^2\,.
\end{equation} 
The main purpose of this paper is to assess the accuracy of the standard normal approximation to 
the normalized version 
\begin{equation}\label{defw}
W:=\frac{S-\mu}{\sigma}=\frac{S-\alpha a}{\sqrt{\alpha c^2+a\gamma^2}}
\end{equation} 
of $S$ measured by the Kolmogorov and the Wasserstein distance, respectively.
As can be seen from the paper \cite{Rob48}, under the general assumption that 
\[\sigma^2=\alpha c^2+a^2\gamma^2\to\infty\,, \]
there are three typical situations in which $W$ is asymptotically normal, which we will now briefly review.
\begin{enumerate}[1)]
 \item $c\not=0\not=a$ and $\gamma^2=o(\alpha)$ 
 \item $a=0\not=c$ and $\gamma=o(\alpha)$
 \item $N$ itself is asymptotically normal and at least one of $a$ and $c$ is different from zero.
\end{enumerate}
We remark that 1) roughly means that $N$ tends to infinity in a certain sense, but such that it only fluctuates slightly around its mean $\alpha$ and, thus, behaves more or less as the constant $\alpha$ (tending to infinity). 
 If $c=0$ and $a\not=0$, then we have
 \begin{equation*}
  S=aN\quad\text{a.s.}
 \end{equation*}
and asymptotic normality of $S$ is equivalent to that of $N$. For this reason, unless specifically stated otherwise, we will from now on assume that $c\not=0$. However, we would like to remark that all bounds in which 
$c$ does not appear in the denominator also hold true in the case $c=0$.

\begin{theorem}\label{maintheo}
Suppose that Assumption \ref{genass} holds, let $W$ be given by \eqref{defw} and let $Z$ have the standard normal distribution. Also, let $(N,N^s)$ be a coupling of $N$ and $N^s$ having the $N$-size biased distribution
such that $N^s$ is also independent of $X_1,X_2,\dotsc$ and define $D:=N^s-N$. Then, we have the following bound: 
\begin{enumerate}[{\normalfont (a)}]
 \item $\displaystyle d_\W(W,Z)\leq\frac{2c^2b\gamma^2}{\sigma^3}+\frac{3\alpha d^3}{\sigma^3}+\frac{\alpha a^2}{\sigma^2}\sqrt{\frac{2}{\pi}}\sqrt{\Var\bigl(E[D\,|\,N]\bigr)}\\
 {}\hspace{2cm}+\frac{2\alpha a^2b}{\sigma^3}E\bigl[1_{\{D<0\}}D^2\bigr]+\frac{\alpha\abs{a}b^2}{\sigma^3}E[D^2]$
\item If, additionally, $D\geq0$, then we also have
 \begin{align*}
d_\K(W,Z)&\leq\frac{(\sqrt{2\pi}+4)bc^2\alpha}{4\sigma^3}\sqrt{E[D^2]}+\frac{ d^3\alpha(3\sqrt{2\pi}+4)}{8\sigma^3}+ \frac{c^3\alpha}{\sigma^3}\notag\\
&\;+\Bigl(\frac{7}{2}\sqrt{2}+2\Bigr)\frac{\sqrt{\alpha} d^3}{c\sigma^2}+\frac{c^2\alpha}{\sigma^2}P(N=0)+\frac{ d^3\alpha}{c\sigma^2}E\bigl[N^{-1/2}1_{\{N\geq1\}}\bigr]\notag\\
&\;+\frac{\alpha a^2}{\sigma^2}\sqrt{\Var\bigl(E[D\,|\,N]\bigr)}
+\frac{\alpha \abs{a}b^2}{2\sigma^3}\sqrt{E\Bigl[\bigl(E\bigl[D^2\,\bigl|\,N\bigr]\bigr)^2\Bigr]}\notag\\
&\;+\frac{\alpha \abs{a}b^2\sqrt{2\pi}}{8\sigma^3}E[D^2]
+\frac{\alpha \abs{a}b}{\sigma^2}\sqrt{P(N=0)}\sqrt{E[D^2]}\notag\\
&\;+\frac{\alpha \abs{a}b^2}{c\sigma^2\sqrt{2\pi}}E\bigl[D^21_{\{N\geq1\}}N^{-1/2}\bigr]+\Bigl(\frac{ d^3\alpha \abs{a} b}{\sigma^2}+ \frac{\alpha bc}{\sigma^2\sqrt{2\pi}}\Bigr)E\bigl[D1_{\{N\geq1\}}N^{-1/2}\bigr]\,.
\end{align*}
\end{enumerate}
\end{theorem}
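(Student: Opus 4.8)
The plan is to combine Stein's method of normal approximation with the size-bias coupling of the index $N$ and a variant of the zero-bias transformation adapted to non-centered summands, exactly as announced in the introduction. First I would set up the Stein equation: for a given test function $h$ (Lipschitz with constant $1$ in the Wasserstein case, an indicator $1_{(-\infty,z]}$ in the Kolmogorov case) let $f=f_h$ be the bounded solution of $f'(w)-wf(w)=h(w)-E[h(Z)]$, and recall the standard bounds on $\fnorm{f}$, $\fnorm{f'}$, $\fnorm{f''}$ (or the analogous bounds for the non-smooth Kolmogorov case, where one needs the refined estimates on $f$ and on $wf(w)$). The quantity to control is then $E[f'(W)-Wf(W)]$. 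The key algebraic manoeuvre is to rewrite $E[Wf(W)]$ using the decomposition $S-\mu = \sum_{j=1}^N(X_j-a) + a(N-\alpha)$, i.e.\ to split the fluctuation of the random sum into the conditionally-centered part and the part coming from the fluctuation of the index. The first part is handled by a zero-bias type coupling conditionally on $N$; the second, genuinely new, part is handled by writing $E[a(N-\alpha)g(\cdot)]$ in terms of the size-biased index via the defining identity $E[Ng(N)]=\alpha E[g(N^s)]$, which is where $D=N^s-N$ enters.

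Concretely, I would introduce (from Section \ref{stein}) the extension of the zero-bias transformation to the not-necessarily-centered summands, construct $W^*$ coupled to $W$ so that $E[f'(W^*)] = E[Wf(W)]$ up to the index-fluctuation correction, and then estimate $E[f'(W)-f'(W^*)]$ by a Taylor expansion, bounding it by $\fnorm{f''}E\abs{W-W^*}$ in the Wasserstein case. The term $W-W^*$ is small because the zero-bias transformation only perturbs one summand and the summands are standardized by $\sigma$; this produces the terms with $b$, $c^2$, $\gamma^2$ and $d^3$ over $\sigma^3$. For the index part, conditioning on $N$ and using independence of $N^s$ from the $X_j$, the replacement of $N$ by $N^s$ moves $S$ by a sum of $D$ additional i.i.d. copies of $X_j$; controlling the resulting expressions requires (i) $\Var(E[D\mid N])$ to measure how far $D$ is from being conditionally deterministic, (ii) second-moment quantities $E[D^2]$ and $E[1_{\{D<0\}}D^2]$ for the Taylor remainder, and (iii) in the Kolmogorov case, concentration-type auxiliary estimates such as bounds on $E[N^{-1/2}1_{\{N\geq 1\}}]$ and $P(N=0)$, which smooth out the non-Lipschitz test function — these are precisely the ``auxiliary results'' relegated to Section \ref{Appendix}.

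For part (b), the Kolmogorov bound, the same skeleton applies but every step is heavier: one cannot simply use $\fnorm{f''}$ since for $h=1_{(-\infty,z]}$ the solution $f$ is only Lipschitz, not $C^1$ with bounded derivative, so one uses the concentration inequality approach — bounding $E[(h(W)-h(W^*))\,]$-type differences by $P(z < W \le z+t)$ plus $t$-dependent error terms, then optimizing in $t$. This is the source of the many constants like $\sqrt{2\pi}+4$ and $3\sqrt{2\pi}+4$, and of the extra $\sqrt{\alpha}d^3/(c\sigma^2)$ term, which comes from a Berry--Esseen estimate for the conditional sum $\sum_{j=1}^n(X_j-a)$ of a fixed number $n$ of summands, applied pathwise in $N=n$ and then integrated; the factor $N^{-1/2}$ and the restriction to $\{N\geq1\}$ are the footprints of this conditional CLT, and $P(N=0)$ handles the degenerate event where there is no sum at all.

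The main obstacle I expect is the index-fluctuation term, i.e.\ correctly accounting for the replacement $N \rightsquigarrow N^s$ inside the Stein identity. The difficulty is twofold: first, after conditioning on $N$ one is comparing $f$ evaluated at $W$ and at a version of $W$ with $D$ extra summands, and $D$ may be negative (if the coupling is not monotone), so one must be careful with the sign and with which Taylor expansion point to use — this is why the $1_{\{D<0\}}D^2$ term appears only in the Wasserstein bound and why part (b) assumes $D\ge 0$. Second, the term $a^2(N-\alpha)$ contributes at order $a^2\gamma^2/\sigma^2$, which is not small merely because $\sigma\to\infty$; it is genuinely controlled only through $\Var(E[D\mid N])$, and getting the clean factor $\sqrt{2/\pi}$ in front requires using the exact $L^1$-norm of a centered Gaussian (via $E\abs{Z}=\sqrt{2/\pi}$) rather than a crude bound — so the delicate point is to isolate this contribution as essentially a Gaussian-against-Gaussian comparison and not lose the sharp constant. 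Everything else — the conditional zero-bias step, the conditional Berry--Esseen input, the bookkeeping of moments — is, modulo lengthy but routine estimation, standard once this decomposition is in place.
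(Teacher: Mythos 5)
Your proposal follows essentially the same route as the paper: the Stein equation with the decomposition $S-\alpha a=(S-aN)+a(N-\alpha)$, a non-zero-bias coupling of the summands for the first part, the size-bias coupling $(N,N^s)$ with $D=N^s-N$ for the index part (conditional independence plus Cauchy--Schwarz producing the $\Var(E[D\,|\,N])$ term), and conditional Berry--Esseen/concentration estimates yielding the $P(N=0)$ and $N^{-1/2}1_{\{N\geq1\}}$ terms in the Kolmogorov bound, with $D\geq0$ assumed exactly where you indicate. The only loose points are cosmetic: the constant $\sqrt{2/\pi}$ is simply the standard bound $\fnorm{f_h'}\leq\sqrt{2/\pi}$ on the Stein solution for Lipschitz $h$ (no separate Gaussian-against-Gaussian comparison is needed), and it is the weight $\Var(S_n)=nc^2$ in the conditional zero-bias step that forces the size-biased index into $W^*$, which is where the $\gamma^2$ term actually originates.
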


\begin{remark}\label{mtrem}
\begin{enumerate}[(a)]
\item In many concrete situations, one has that a natural coupling of $N$ and $N^s$ yields $D\geq0$ and, hence, Theorem \ref{maintheo} gives bounds on both the Wasserstein and Kolmogorov distances (note that the fourth 
summand in the bound on $d_\W(W,Z)$ vanishes if $D\geq0$). 
For instance, by Remark \ref{sbrem} (b), this is the case, if 
the distribution of $N$ is infinitely divisible. In this case, the random variables $D$ and $N$ can be chosen to be independent and, thus, our bounds can further be simplified (see Corollary \ref{infdiv} below). 
Indeed, since $N^s$ is always stochastically larger than $N$, by Remark \ref{sbrem} (a) it is always possible to construct a coupling $(N,N^s)$ such that $D=N^s-N\geq0$.
\item However, although we know that a coupling of $N$ and $N^s$ such that $D=N^s-N\geq0$ is always possible in principle, sometimes one would prefer working with a feasible and natural coupling which does not have this property. 
For instance, this is the case in the situation of Corollary \ref{divisible} below. This is why we have not restricted ourselves to the case $D\geq0$ but allow for arbitrary couplings $(N,N^s)$. We mention that we also have a bound on the Kolmogorov distance between $W$ and a standard normally distributed $Z$ in this more general situation, which is given by 
\begin{align*}
d_\K(W,Z)&\leq\sum_{j=1}^7 B_j\,,
\end{align*}
where $B_1, B_2, B_4, B_5, B_6$ and $B_7$ are defined in \eqref{e1z7}, \eqref{e12gen}, \eqref{e222}, \eqref{e221b}, \eqref{boundr1} and \eqref{boundr2}, respectively, and 
\begin{equation*}
B_3:=\frac{\alpha a^2}{\sigma^2}\sqrt{\Var\bigl(E[D\,|\,N]\bigr)}\,.
\end{equation*}
It is this bound what is actually proved in Section \ref{proof}. Since it is given by a rather long expression in the most general case, we have decided, however, not to present it within Theorem \ref{maintheo}.
\item We mention that the our proof of the Wasserstein bounds given in Theorem \ref{maintheo} is only roughly five pages long and is not at all technical but rather makes use of probabilistic ideas and concepts. 
The extended length of our derivation is simply due to our ambition to present Kolmogorov bounds as well which, as usual within Stein's method, demand much more technicality.
\end{enumerate}
\end{remark}

The next theorem treats the special case of centered summands.
\begin{theorem}\label{meanzero}
Suppose that Assumption \ref{genass} holds with $a=E[X_1]=0$, let $W$ be given by \eqref{defw} and let $Z$ have the standard normal distribution. Then, 
\begin{align*}
d_\W(W,Z)&\leq\frac{2\gamma}{\alpha}+\frac{3 d^3}{c^3\sqrt{\alpha}} \quad\text{and}\\
d_\K(W,Z)&\leq\frac{(\sqrt{2\pi}+4)\gamma}{4\alpha}+\Bigl(\frac{ d^3(3\sqrt{2\pi}+4)}{8c^3}+ 1\Bigr)\frac{1}{\sqrt{\alpha}} 
+\Bigl(\frac{7}{2}\sqrt{2}+2\Bigr)\frac{ d^3}{c^3\alpha}\notag\\
&\;+P(N=0)+\biggl(\frac{ d^3}{c^3}+
\frac{\gamma}{\sqrt{\alpha}\sqrt{2\pi}}\biggr)\sqrt{E\bigl[1_{\{N\geq1\}}N^{-1}\bigr]}\,.
\end{align*}
\end{theorem}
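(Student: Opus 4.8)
The approach is to realise, as announced in the introduction, a coupling of $W$ with a variable $W^{*}$ carrying its \emph{zero-biased} distribution and to estimate $E|W^{*}-W|$ directly. Since $a=0$ we have $\mu=0$ and, by \eqref{meanvar}, $\sigma^{2}=\alpha c^{2}$, so $W=S/(c\sqrt{\alpha})$ is centered with unit variance and therefore admits a zero-biased partner $W^{*}$, characterised by $E[f'(W^{*})]=E[Wf(W)]$ for all absolutely continuous $f$ with $E|Wf(W)|<\infty$; moreover $W^{*}=S^{*}/(c\sqrt{\alpha})$, where $S^{*}$ has the $S$-zero-biased distribution. The first step is to exhibit $S^{*}$ concretely. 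Writing $S_{n}:=\sum_{j=1}^{n}X_{j}$ and using independence of $N$ from the $X_{j}$ together with the one-dimensional zero-bias identity for a single mean-zero summand, a short computation (condition on $N$, then on $S_{n}-X_{j}$, sum over $j$, and recognise the size-bias weights $nP(N=n)/\alpha$) shows that $S^{*}\stackrel{\D}{=}\sum_{j=1}^{N^{s}-1}X_{j}+X^{*}$, where $N^{s}$ has the $N$-size-biased law, $X^{*}$ has the $X_{1}$-zero-biased law, and $N^{s},X_{1},X_{2},\dots,X^{*}$ are independent. We then fix the coupling by using the \emph{same} sequence $X_{1},X_{2},\dots$ and the same $X^{*}$, and by choosing (Remark~\ref{sbrem}(a)) a monotone coupling of $N$ and $N^{s}$ with $N^{s}\ge N$.

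With this choice, $S^{*}-S=X^{*}-X_{N}$ on $\{N^{s}=N\}$ (which forces $N\ge1$), while $S^{*}-S=X_{N+1}+\dots+X_{N^{s}-1}+X^{*}$ on $\{N^{s}>N\}$. Now $d_\W(W,Z)\le 2E|W^{*}-W|=\frac{2}{c\sqrt{\alpha}}E|S^{*}-S|$, and $E|S^{*}-S|$ splits accordingly. On $\{N^{s}=N\}$ one bounds $|X^{*}-X_{N}|\le|X^{*}|+|X_{N}|$ and uses $E|X^{*}|=d^{3}/(2c^{2})$ (take the test function $x\mapsto x|x|/2$ in the zero-bias identity) together with $E|X_{1}|\le c\le d^{3}/c^{2}$, the last step because $d\ge c$, i.e. $E|X_{1}|^{3}\ge(\Var X_{1})^{3/2}$ by Jensen. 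On $\{N^{s}>N\}$ one bounds the $L^{1}$-norm of the internal partial sum by its $L^{2}$-norm $c\sqrt{\max(N^{s}-N-1,0)}$, then applies Cauchy--Schwarz and $E[N^{s}-N]=\gamma^{2}/\alpha$ (Lemma~\ref{distlemma}(b), cf.\ Remark~\ref{sbrem}(a)). Adding the two pieces gives $E|S^{*}-S|\le \frac{3d^{3}}{2c^{2}}+\frac{c\gamma}{\sqrt{\alpha}}$, hence $d_\W(W,Z)\le\frac{2\gamma}{\alpha}+\frac{3d^{3}}{c^{3}\sqrt{\alpha}}$, which is the asserted bound.

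For the Kolmogorov distance I would start from the Stein equation for $h=1_{(-\infty,z]}$, which yields $P(W\le z)-P(Z\le z)=E[f_{z}'(W)-f_{z}'(W^{*})]$ with the usual bounds on $\fnorm{f_{z}}$ and $\fnorm{f_{z}'}$ (the source of the $\sqrt{2\pi}$-constants). One decomposes $f_{z}'(W)-f_{z}'(W^{*})$ into its Lipschitz part and the unit jump of $f_{z}'$ at $z$. The Lipschitz part is controlled exactly as in the Wasserstein computation above and produces the $\gamma/\alpha$- and $d^{3}/(c^{3}\sqrt{\alpha})$-type terms, together with the extra $\sqrt{2\pi}$ and $d^{3}/(c^{3}\alpha)$ contributions. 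The jump part is dominated by a concentration quantity of the form $E\bigl[|W^{*}-W|\,1_{\{W\wedge W^{*}\le z\le W\vee W^{*}\}}\bigr]$; this is where the random number of summands bites. I would condition on $N$ and on the auxiliary randomness entering $W^{*}-W$, and invoke a concentration estimate for a sum of a \emph{fixed} number $n\ge1$ of i.i.d.\ centered summands of the type $P(S_{n}\in[x,x+t])\le C\,t/(c\sqrt{n})+C\,d^{3}/(c^{3}\sqrt{n})$; taking expectation over $N$ turns $n^{-1/2}$ into $E[1_{\{N\ge1\}}N^{-1/2}]\le\sqrt{E[1_{\{N\ge1\}}N^{-1}]}$ and isolates the atom $\{N=0\}$ (on which $W\equiv0$), producing the $P(N=0)$ and $\sqrt{E[1_{\{N\ge1\}}N^{-1}]}$ terms. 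The main obstacle is precisely this last step: unlike in textbook zero-bias Berry--Esseen arguments, $|W^{*}-W|$ is not almost surely bounded, so the jump term cannot be handled crudely; it is the conditional concentration estimates (the auxiliary results of Section~\ref{Appendix}) and the attendant bookkeeping of the many error constants that carry the weight of the proof.
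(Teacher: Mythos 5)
Your Wasserstein argument is correct and is essentially the paper's own route: your representation $S^{*}\stackrel{\D}{=}\sum_{j=1}^{N^{s}-1}X_j+X^{*}$ is, by the i.i.d.\ property, the same construction as the paper's $S_{M}^{nz}=S_M-X_1+Y$ with $M=N^s$ size-biased and $Y$ (non-)zero-biased, and your monotone coupling plus the estimate $E\abs{S^{*}-S}\leq\frac{3d^{3}}{2c^{2}}+\frac{c\gamma}{\sqrt{\alpha}}$ reproduces the chain \eqref{e1gen}--\eqref{e1w} with exactly the same constants, so the bound $d_\W(W,Z)\leq\frac{2\gamma}{\alpha}+\frac{3d^{3}}{c^{3}\sqrt{\alpha}}$ is fully established.

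The Kolmogorov half, however, has a genuine gap. Your decomposition of $f_z'(W)-f_z'(W^{*})$ into $Wf_z(W)-W^{*}f_z(W^{*})$ plus the indicator difference matches \eqref{e1dec}, but your claim that the jump part is dominated by $E\bigl[\abs{W^{*}-W}\,1_{\{W\wedge W^{*}\leq z\leq W\vee W^{*}\}}\bigr]$ is wrong: the jump part equals $P(W\leq z)-P(W^{*}\leq z)$, whose modulus is bounded by $P\bigl(W\wedge W^{*}\leq z< W\vee W^{*}\bigr)$ with no weight $\abs{W^{*}-W}$ (a weight that can be arbitrarily small on the event cannot give an upper bound); the weighted quantities of the form you wrote occur in the paper only for the $a\neq0$ remainder terms $E_{2,3},E_{2,4}$. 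More importantly, the plan to ``condition on $N$ and on the auxiliary randomness entering $W^{*}-W$ and apply a Berry--Esseen concentration estimate to $S_n$'' does not go through as stated, because $W^{*}-W$ is not conditionally independent of $W$: in your coupling the replaced summand $X_N$ (in the paper's, $X_1$) enters both $W$ and $W^{*}-W$. This is precisely why the paper inserts the intermediate variable $W_{N^s}$ and proves Lemma \ref{ce2} in two steps: $\abs{P(W^{*}\leq z)-P(W_{N^s}\leq z)}$ is treated with the leave-one-out concentration inequality of Lemma \ref{cecgs}, applied to the sum with the replaced summand removed (which \emph{is} conditionally independent of $(X_1,Y)$ given $N^s$), giving the $\bigl(\frac{7}{2}\sqrt{2}+2\bigr)\frac{d^{3}}{c^{3}\sqrt{\alpha}}$ term \eqref{cers1}; and $\abs{P(W_{N^s}\leq z)-P(W\leq z)}$ is treated by conditioning on $\{N=n,\,D\geq0\}$, exploiting the conditional independence of $W_n$ and $V$, and invoking the Berry--Esseen based Lemma \ref{ce1}, which is where $P(N=0)$, the term $\frac{1}{\sqrt{2\pi}}E\bigl[\sqrt{D}N^{-1/2}1_{\{N\geq1\}}\bigr]\leq\frac{\gamma}{\sqrt{\alpha}\sqrt{2\pi}}\sqrt{E\bigl[1_{\{N\geq1\}}N^{-1}\bigr]}$ and the constant $2C_\K\leq1$ come from (\eqref{cers3}--\eqref{cers4}). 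A similar remark applies to your ``Lipschitz part'': it is not handled ``exactly as in the Wasserstein computation'' but via \eqref{fzdiff}, which requires bounding $E\bigl[\abs{W^{*}-W}\,\abs{W}\bigr]$ by Cauchy--Schwarz and conditional independence (\eqref{e1z3}--\eqref{e1z6}) to produce the $(\sqrt{2\pi}+4)\gamma/(4\alpha)$ and $(3\sqrt{2\pi}+4)d^{3}/(8c^{3}\sqrt{\alpha})$ terms. Without the two-step splitting through $W_{N^s}$ and the leave-one-out concentration, the jump term is not controlled and the stated Kolmogorov bound is not obtained.
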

\begin{remark}\label{remmz}
\begin{enumerate}[(a)]
\item The proof will show that Theorem \ref{meanzero} holds as long as \\
$E[N^2]<\infty$. Thus, Assumption \ref{genass} could be slightly relaxed in this case. 
\item Theorem \ref{meanzero} is not a direct consequence of Theorem \ref{maintheo} as it is stated above. Actually, instead of Theorem \ref{maintheo} we could state a result, 
which would reduce to Theorem \ref{meanzero} if $a=0$, but the resulting bounds would look more cumbersome in the general case. Also, they would be of the same order 
as the bounds presented in Theorem \ref{maintheo} in the case that $a\not=0$. This is why we have refrained from presenting these bounds in the general case but have chosen to prove Theorem \ref{maintheo} and Theorem \ref{meanzero} in parallel.
Note that, if $a\not=0$, then a necessary condition for our bounds to imply the CLT is that 
 \begin{equation}\label{neccond}
  \frac{\alpha}{\sigma^3}E[D^2]=o(1)\quad\text{and}\quad\frac{\alpha}{\sigma^2}\sqrt{\Var\bigl(E[D|N]\bigr)}=o(1)\,.
 \end{equation}
This should be compared to the conditions which imply asymptotic normality for $N$ by size-bias couplings given in \cite{GolRin96}, namely 
\begin{equation}\label{condGolRin}
  \frac{\alpha}{\gamma^3}E[D^2]=o(1)\quad\text{and}\quad\frac{\alpha}{\gamma^2}\sqrt{\Var\bigl(E[D|N]\bigr)}=o(1)\,.
\end{equation}
If \eqref{condGolRin} holds, then from \cite{GolRin96} we know that $N$ is asymptotically normal and, as was shown within the proof of Lemma 1 in \cite{Rob48}, this implies that $\gamma=o(\alpha)$. Since, if $a\not=0$, 
\eqref{condGolRin} implies \eqref{neccond}, we can conclude from Theorems \ref{maintheo} and \ref{meanzero} that $W$ is asymptotically normal. In a nutshell, if the bounds from \cite{GolRin96} on the distance to normality 
of $N$ tend to zero, then so do our bounds and, hence, yield the CLT for $W$. However, the validity of \eqref{condGolRin} is neither necessary for \eqref{neccond} to hold nor for our bounds to imply asymptotic normality 
of $W$ (see Remark \ref{hyprem} (b) below).
\item For distribution functions $F$ and $G$ on $\R$ and $1\leq p<\infty$, one defines their $L^p$-distance by 
\begin{equation*}
\pnorm{F-G}:=\biggl(\int_\R\babs{F(x)-G(x)}^p dx\biggr)^{1/p}\,.
\end{equation*}
It is known (see \cite{Dud02}) that $\einsnorm{F-G}$ coincides with the Wasserstein distance of the corresponding distributions $\mu$ and $\nu$, say. By H\"older's inequality, for $1\leq p<\infty$, we have 
\begin{equation*}
 \pnorm{F-G}\leq d_\K(\mu,\nu)^{\frac{p-1}{p}}\cdot d_\W(\mu,\nu)^{\frac{1}{p}}\,.
\end{equation*}
Thus, our results immediately yield bounds on the $L^p$-distances of $\calL(W)$ and $N(0,1)$.
\item It would be possible to drop the assumption that the summands be identically distributed. For reasons of clarity of the presentation, we have, however, decided to stick to the i.i.d. setting. 
See also the discussion of possible generalizations before the proof of Lemma \ref{distlemma} at the end of this section.
\end{enumerate}
\end{remark}

\begin{cor}\label{infdiv}
Suppose that Assumption \ref{genass} holds, let $W$ be given by \eqref{defw} and let $Z$ have the standard normal distribution. Furthermore, assume that the distribution of the index $N$ is infinitely divisible. Then, we have 
\begin{align*}
&d_\W(W,Z)\leq \frac{2c^2b\gamma^2+3\alpha d^3}{\sigma^3}
+\frac{(\alpha\delta^3-\alpha^2\gamma^2+\gamma^4-\beta^4)\abs{a}b^2}{\alpha\sigma^3}\quad\text{and}\\
&d_\K(W,Z)\leq\frac{ d^3\alpha(3\sqrt{2\pi}+4)}{8\sigma^3}+ \frac{c^3\alpha}{\sigma^3}+\biggl(\frac{7}{2}\sqrt{2}+2\biggr)\frac{\sqrt{\alpha} d^3}{c\sigma^2}+\frac{c^2\alpha}{\sigma^2}P(N=0)\\
&\;+\frac{\abs{a}b^2(\delta^3\alpha+\gamma^4-\beta^4-\gamma^2\alpha^2)}{\alpha\sigma^3}\biggl(\frac{\sqrt{2\pi}}{8}+\frac{1}{2}\biggr)\\
&\;+\sqrt{\delta^3\alpha+\gamma^4-\beta^4-\gamma^2\alpha^2}\biggl(\frac{(\sqrt{2\pi}+4)bc^2}{4\sigma^3}+\sqrt{P(N=0)}\frac{\abs{a}b}{\sigma^2}\biggr)\\
&\;+E\bigl[1_{\{N\geq1\}}N^{-1/2}\bigr]\biggl(\frac{\abs{a}b^2(\delta^3\alpha+\gamma^4-\beta^4-\gamma^2\alpha^2)}{c\alpha\sigma^2\sqrt{2\pi}}+\frac{\gamma^2 d^3\abs{a}b}{\sigma^2}+\frac{ d^3\alpha}{c\sigma^2}
+\frac{\gamma^2bc}{\sigma^2\sqrt{2\pi}}\biggr)
\end{align*}
\end{cor}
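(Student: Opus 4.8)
\textbf{Proof proposal for Corollary \ref{infdiv}.}

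The plan is to feed the infinitely divisible case into Theorem \ref{maintheo} using the particularly convenient size-bias coupling supplied by Steutel's theorem. First I would invoke Remark \ref{sbrem} (b): since $\calL(N)$ is infinitely divisible, there is a coupling $(N,N^s)$ of $N$ with its size-biased distribution such that $D:=N^s-N\geq0$ and $D$ is independent of $N$; moreover $N^s$, and hence $D$, may be chosen independent of $X_1,X_2,\dotsc$ as well, so the hypotheses of Theorem \ref{maintheo} are satisfied, and since $D\geq0$ both parts (a) and (b) of that theorem apply.

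Next I would use the independence of $D$ and $N$ to collapse every term in the bounds of Theorem \ref{maintheo} that involves a conditional expectation given $N$. Since $E[D\mid N]=E[D]$ almost surely, we get $\Var\bigl(E[D\mid N]\bigr)=0$, so the summand $\frac{\alpha a^2}{\sigma^2}\sqrt{\Var(E[D\mid N])}$ drops out of both (a) and (b); likewise $E\bigl[D^2\mid N\bigr]=E[D^2]$ is deterministic, whence $\sqrt{E\bigl[(E[D^2\mid N])^2\bigr]}=E[D^2]$, and each mixed moment factorizes, e.g. $E\bigl[D^2 1_{\{N\geq1\}}N^{-1/2}\bigr]=E[D^2]\,E\bigl[1_{\{N\geq1\}}N^{-1/2}\bigr]$ and $E\bigl[D 1_{\{N\geq1\}}N^{-1/2}\bigr]=E[D]\,E\bigl[1_{\{N\geq1\}}N^{-1/2}\bigr]$. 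Since also $E\bigl[1_{\{D<0\}}D^2\bigr]=0$, the fourth summand of (a) vanishes too, leaving only terms built from $E[D]$, $E[D^2]$, $P(N=0)$ and $E\bigl[1_{\{N\geq1\}}N^{-1/2}\bigr]$.

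It then remains to evaluate $E[D]$ and $E[D^2]$ in terms of $\alpha,\beta^2,\gamma^2,\delta^3$. From Remark \ref{sbrem} (a) we have $E[D]=E[N^s]-E[N]=\Var(N)/E[N]=\gamma^2/\alpha$. For the second moment I would use the size-biasing moment identity $E\bigl[(N^s)^2\bigr]=E[N^3]/E[N]=\delta^3/\alpha$ together with $N^s=N+D$ and $D\perp N$, which yields $\delta^3/\alpha=E\bigl[(N^s)^2\bigr]=E[N^2]+2E[N]E[D]+E[D^2]=\beta^2+2\gamma^2+E[D^2]$, so that $E[D^2]=\delta^3/\alpha-\beta^2-2\gamma^2$. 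A short computation using $\gamma^2=\beta^2-\alpha^2$ (so $\gamma^4-\beta^4=-\alpha^2(\gamma^2+\beta^2)$) rewrites this as $\alpha^2 E[D^2]=\delta^3\alpha+\gamma^4-\beta^4-\gamma^2\alpha^2$ and $\alpha E[D^2]=(\delta^3\alpha+\gamma^4-\beta^4-\gamma^2\alpha^2)/\alpha$. Substituting these expressions into the simplified forms of Theorem \ref{maintheo} (a) and (b), and collecting the two terms proportional to $\alpha\sqrt{E[D^2]}$ (which becomes $\sqrt{\delta^3\alpha+\gamma^4-\beta^4-\gamma^2\alpha^2}$) and the four terms proportional to $E\bigl[1_{\{N\geq1\}}N^{-1/2}\bigr]$, produces exactly the asserted bounds.

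The argument is essentially bookkeeping rather than a genuine obstacle; the one point that requires care is the computation of $E[D^2]$, where it is crucial that infinite divisibility provides a coupling with $D$ independent of $N$ — without that independence the expansion of $E\bigl[(N^s)^2\bigr]$ would not close and the clean closed form in terms of $\delta^3$ would be unavailable. The rest is the purely algebraic matching of $\delta^3\alpha+\gamma^4-\beta^4-\gamma^2\alpha^2=\alpha^2 E[D^2]$, arranged so that the factor $\alpha\sqrt{E[D^2]}$ appears under a single square root as in the statement.
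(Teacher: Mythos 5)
Your proposal is correct and follows essentially the same route as the paper: invoke Remark \ref{sbrem} (b) to get the Steutel coupling with $D\geq0$ independent of $N$, observe $\Var(E[D\mid N])=0$ and $E[D^2\mid N]=E[D^2]$, and substitute $E[D]=\gamma^2/\alpha$ and $E[D^2]=\delta^3/\alpha+(\gamma^4-\beta^4)/\alpha^2-\gamma^2$ into Theorem \ref{maintheo}. The only cosmetic difference is that you obtain $E[D^2]$ by expanding $E[(N^s)^2]=E[(N+D)^2]$ directly, whereas the paper computes $\Var(D)=\Var(N^s)-\Var(N)$ and then adds $E[D]^2$; the two computations use the same independence and give the same value.
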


\begin{proof}
By Remark \ref{sbrem} (b) we can choose $D\geq0$ independent of $N$ such that $N^s=N+D$ has the 
$N$-size biased distribution. Thus, by independence we obtain
\begin{align*}
\Var(D)&=\Var(N^s)-\Var(N)=E\bigl[(N^s)^2\bigr]-E[N^s]^2-\gamma^2\\
&=\frac{E[N^3]}{E[N]}-\left(\frac{E[N^2]}{E[N]}\right)^2-\gamma^2\\
&=\frac{\delta^3}{\alpha}-\frac{\beta^4}{\alpha^2}-\gamma^2\,.
\end{align*}
This gives 
\[E[D^2]=\Var(D)+E[D]^2=\frac{\delta^3}{\alpha}+\frac{\gamma^4-\beta^4}{\alpha^2}-\gamma^2\,.\]
Also, 
\[\Var\bigl(E[D|N]\bigr)=\Var\bigl(E[D]\bigr)=0\quad\text{and}\quad \sqrt{E\Bigl[\bigl(E\bigl[D^2\,\bigl|\,N\bigr]\bigr)^2\Bigr]}=E[D^2]  \]
in this case. Now, the claim follows from Theorem \ref{maintheo}.\\
\end{proof}

In the case that $N$ is constant, the results from Theorem \ref{maintheo} reduce to the known optimal convergence rates for sums of i.i.d. random variables with finite third moment, albeit with non-optimal constants 
(see e.g. \cite{Shev11} and \cite{Gol10} for comparison).

\begin{cor}\label{ncons}
Suppose that Assumption \ref{genass} holds, let $W$ be given by \eqref{defw} and let $Z$ have the standard normal distribution. Also, assume that the index $N$ is a positive constant. Then, 
\begin{align*}
d_\W(W,Z)&\leq\frac{3 d^3}{c^3\sqrt{N}}\quad\text{and}\\
d_\K(W,Z)&\leq \frac{1}{\sqrt{N}}\biggl(1+\Bigl(\frac{7}{2}\bigl(1+\sqrt{2}\bigr)+\frac{3\sqrt{2\pi}}{8}\Bigr)\frac{ d^3}{c^3}\biggr)\,.
\end{align*}
\end{cor}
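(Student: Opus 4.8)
The plan is to specialise the bounds of Theorem \ref{maintheo} to the degenerate case $N\equiv n$ for a fixed positive integer $n$, in which $W=(S-na)/(\sqrt{n}\,c)$ is nothing but the classically standardised sum of $n$ i.i.d.\ summands. The key observation is that, since $N$ equals the positive constant $n$, the defining relation \eqref{sbdef} reads $E[n\,h(n)]=n\,E[h(N^s)]$, which forces $N^s=n$ almost surely; hence the (unique, and trivially independent) coupling of $N$ with its size-biased version satisfies $D=N^s-N\equiv 0$. Moreover $\alpha=E[N]=n$, $\gamma^2=\Var(N)=0$ and $P(N=0)=0$, and from \eqref{meanvar} we get $\sigma^2=\alpha c^2+a^2\gamma^2=nc^2$, so that $\sigma=\sqrt{n}\,c$ and $\sigma^3=n^{3/2}c^3$.

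For the Wasserstein bound I would substitute these quantities into Theorem \ref{maintheo}(a). The first summand vanishes because $\gamma^2=0$, and the last three vanish because $D\equiv 0$ (in particular $E[D^2]=0$, $E[1_{\{D<0\}}D^2]=0$ and $\Var(E[D\,|\,N])=0$); only $3\alpha d^3/\sigma^3=3d^3/(c^3\sqrt{n})$ remains, which is exactly the claimed bound $d_\W(W,Z)\le 3d^3/(c^3\sqrt{N})$.

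For the Kolmogorov bound, since $D\equiv 0\ge 0$ we may invoke Theorem \ref{maintheo}(b). Every summand carrying a factor of the form $\sqrt{E[D^2]}$, $E[D^2]$, $E[D\,\cdots\,]$, $\Var(E[D\,|\,N])$ or $P(N=0)$ is zero, so the bound collapses to the four terms
\[
\frac{d^3\alpha(3\sqrt{2\pi}+4)}{8\sigma^3}+\frac{c^3\alpha}{\sigma^3}+\Bigl(\frac{7}{2}\sqrt{2}+2\Bigr)\frac{\sqrt{\alpha}\,d^3}{c\sigma^2}+\frac{d^3\alpha}{c\sigma^2}\,E\bigl[N^{-1/2}1_{\{N\ge 1\}}\bigr].
\]
Using $\alpha=n$, $\sigma=\sqrt{n}\,c$ and $E[N^{-1/2}1_{\{N\ge 1\}}]=n^{-1/2}$, these four expressions equal $\frac{1}{\sqrt{n}}$ times $\frac{(3\sqrt{2\pi}+4)}{8}\cdot\frac{d^3}{c^3}$, then $1$, then $\bigl(\frac{7}{2}\sqrt{2}+2\bigr)\frac{d^3}{c^3}$, then $\frac{d^3}{c^3}$, respectively. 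Collecting the coefficients of $d^3/c^3$ gives $\frac{3\sqrt{2\pi}}{8}+\frac{1}{2}+\frac{7}{2}\sqrt{2}+2+1=\frac{3\sqrt{2\pi}}{8}+\frac{7}{2}\bigl(1+\sqrt{2}\bigr)$, which yields precisely the asserted Kolmogorov bound. There is no genuine obstacle here: the corollary is a pure book-keeping consequence of Theorem \ref{maintheo}, the only point requiring a word of care being the identification $N^s=n$, which makes $D$ vanish and thereby kills all the non-classical terms.
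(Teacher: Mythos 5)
Your proposal is correct and follows exactly the paper's own route: the paper's proof is the one-liner ``choose $N^s=N$, so $D=0$, and apply Theorem \ref{maintheo}'', and your argument is just that specialization with the bookkeeping (namely $\alpha=N$, $\gamma^2=0$, $\sigma^2=Nc^2$, $P(N=0)=0$, $E[N^{-1/2}1_{\{N\geq1\}}]=N^{-1/2}$) carried out explicitly, and your collection of the constants $\frac{3\sqrt{2\pi}}{8}+\frac12+\frac72\sqrt{2}+2+1=\frac{3\sqrt{2\pi}}{8}+\frac72(1+\sqrt{2})$ checks out.
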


\begin{proof}
 In this case, we can choose $N^s=N$ yielding $D=0$ and the result follows from Theorem \ref{maintheo}.\\
\end{proof}

Another typical situation when the distribution of $W$ may be well approximated by the normal is if the index $N$ is itself a sum of many i.i.d. variables. Our results yield very explicit convergence rates in this special case. 
This will be exemplified for the Wasserstein distance by the next corollary. Using the bound presented in Remark \ref{mtrem} (b) one would get a bound on the Kolmogorov distance, which is more complicated but of the same 
order of magnitude. A different way to prove bounds for the CLT by Stein's method in this special situation is presented in Theorem 10.6 of \cite{CGS}. Their method relies on a general bound for the error of normal approximation 
to the distribution of a non-linear statistic of independent random variables which can be written as a linear statistic plus a small remainder term as well as on truncation and conditioning on $N$ in order to apply the classical 
Berry-Esseen theorem. Though our method also makes use of conditioning on $N$, it is more directly tied to random sums and also relies on (variations of) classical couplings in Stein's method
(see the proof in Section \ref{proof} for details).

\begin{cor}\label{divisible}
Suppose that Assumption \ref{genass} holds, let $W$ be given by \eqref{defw} and let $Z$ have the standard normal distribution. Additionally, assume that the distribution of the index $N$ is such that $N\stackrel{\D}{=}N_1+\ldots+N_n$, where $n\in\N$ and $N_1,\dotsc,N_n$ 
are i.i.d. nonnegative random variables such that $E[N_1^3]<\infty$. Then, using the notation 
\begin{align*}
\alpha_1&:=E[N_1]\,,\quad \beta_1^2:=E[N_1^2]\,,\quad\gamma_1^2:=\Var(N_1)\,,\quad\delta_1^3:=E[N_1^3] \quad\text{and}\\
\sigma_1^2&:=c^2\alpha_1 +a^2\gamma_1^2
\end{align*}
we have 
\begin{align*}
d_\W(W,Z)&\leq\frac{1}{\sqrt{n}}\biggl(\frac{2c^2b\gamma_1^2}{\sigma_1^3}+\frac{3\alpha_1 d^3}{\sigma_1^3}+\sqrt{\frac{2}{\pi}}\frac{\alpha_1 a^2\gamma_1^2}{\sigma_1^2}
+\frac{2\alpha_1 (a^2b+\abs{a}b^2)}{\sigma_1^3}\Bigl(\frac{\delta_1^3}{\alpha_1}-\beta_1^2\Bigr)\biggr)\,.
\end{align*}
\end{cor}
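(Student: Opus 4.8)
The plan is to derive Corollary \ref{divisible} as a direct specialization of the Wasserstein bound in Theorem \ref{maintheo}(a). The key observation is that when $N\stackrel{\D}{=}N_1+\dots+N_n$ with $N_1,\dots,N_n$ i.i.d., a natural size-bias coupling of $N$ is available: pick an index $I$ uniformly at random from $\{1,\dots,n\}$, independently of everything, replace $N_I$ by an independent copy $N_I^s$ having the $N_1$-size biased distribution, and set $N^s:=N-N_I+N_I^s$. This is the standard construction (as in \cite{GolRin96}) and it yields $D=N^s-N=N_I^s-N_I\geq 0$, so that the fourth summand in Theorem \ref{maintheo}(a) vanishes and we are in the favorable regime. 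First I would record the elementary moment identities this coupling produces, in terms of the single-summand quantities $\alpha_1,\beta_1,\gamma_1,\delta_1$: we have $\alpha=n\alpha_1$, $\gamma^2=n\gamma_1^2$, $\sigma^2=c^2\alpha+a^2\gamma^2=n(c^2\alpha_1+a^2\gamma_1^2)=n\sigma_1^2$, hence $\sigma=\sqrt{n}\,\sigma_1$ and $\sigma^3=n^{3/2}\sigma_1^3$.

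Next I would compute the two $D$-dependent quantities appearing in the bound. Since $D=N_I^s-N_I$ with $N_I^s$ independent of $N_I$ and $N_I\stackrel{\D}{=}N_1$, the second moment is
\begin{align*}
E[D^2]&=E\bigl[(N_1^s)^2\bigr]-2E[N_1^s]E[N_1]+E[N_1^2]
=\frac{\delta_1^3}{\alpha_1}-2\frac{\beta_1^2}{\alpha_1}\alpha_1+\beta_1^2
=\frac{\delta_1^3}{\alpha_1}-\beta_1^2\,,
\end{align*}
using the identities $E[(N_1^s)^{p-1}]=E[N_1^p]/E[N_1]$ recorded after \eqref{sbdef}. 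For the conditional-variance term, $E[D\,|\,N]$ is (up to the global randomness of $I$) controlled by noting that conditioning on $N$ and on which coordinate was resampled, $E[D|\dots]$ is a bounded function; the cleanest route is to bound $\Var(E[D|N])\le \Var(D)=E[D^2]-E[D]^2\le E[D^2]=\delta_1^3/\alpha_1-\beta_1^2$, and then note $\gamma_1^2=\beta_1^2-\alpha_1^2\le\beta_1^2$ so that $\sqrt{\Var(E[D|N])}\le\sqrt{\delta_1^3/\alpha_1-\beta_1^2}$. Actually, to land exactly on the stated bound one wants the coefficient $\sqrt{2/\pi}\,\alpha_1 a^2\gamma_1^2/\sigma_1^2$, so I would instead use the sharper estimate coming from the fact that $E[D|N]$ only fluctuates through $N$ itself — writing $E[D|N]$ explicitly and bounding its variance by $\gamma_1^4/\alpha^2\cdot(\text{something})$, or more simply observing that one can choose the coupling so that $E[D|N]$ is constant up to a term whose variance is $\le \gamma^2\cdot(\gamma_1^2/\alpha)^2/\dots$; the precise algebra here is the place to be careful, but it is routine once the coupling is fixed.

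Finally I would substitute everything into Theorem \ref{maintheo}(a). Each of the five surviving summands carries a factor $\alpha/\sigma^3=n\alpha_1/(n^{3/2}\sigma_1^3)=\alpha_1/(n^{1/2}\sigma_1^3)$, or $\alpha a^2\gamma^2/\sigma^2\cdot(1/\sigma)$-type scaling, or $c^2 b\gamma^2/\sigma^3=c^2b\cdot n\gamma_1^2/(n^{3/2}\sigma_1^3)=c^2b\gamma_1^2/(n^{1/2}\sigma_1^3)$; in every case a single factor $n^{-1/2}$ factors out and the remaining expression is exactly the single-summand version. Collecting the terms $2c^2b\gamma_1^2/\sigma_1^3$, $3\alpha_1 d^3/\sigma_1^3$, $\sqrt{2/\pi}\,\alpha_1 a^2\gamma_1^2/\sigma_1^2$, and the two terms $2\alpha_1 a^2 b/\sigma_1^3\cdot(\delta_1^3/\alpha_1-\beta_1^2)$ and $\alpha_1|a|b^2/\sigma_1^3\cdot(\delta_1^3/\alpha_1-\beta_1^2)$ (combining as $2\alpha_1(a^2b+|a|b^2)/\sigma_1^3\cdot(\delta_1^3/\alpha_1-\beta_1^2)$ after noting $\alpha\abs{a}b^2/\sigma^3\cdot E[D^2]\le 2\alpha\abs{a}b^2/\sigma^3\cdot E[D^2]$ is absorbed, or rather by keeping the coefficients as they combine) gives the displayed bound.

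The main obstacle I anticipate is pinning down the conditional-variance term $\Var(E[D|N])$ precisely enough to obtain the clean coefficient $\sqrt{2/\pi}\,\alpha_1 a^2\gamma_1^2/\sigma_1^2$ with its $n^{-1/2}$ scaling, rather than a cruder bound that would scale like $n^{-1/2}$ but with a worse constant. This requires writing out $E[D\,|\,N=N-N_I+N_I^s]$ — equivalently computing the regression of the resampled-coordinate increment on the total — and exploiting that $D$ depends on $N$ only through the value of the single resampled summand, whose conditional law given the total sum has variance of order $\gamma_1^2$ and fluctuates with the total at rate $\gamma_1^2/\alpha$. Everything else is bookkeeping with the scaling relations $\alpha=n\alpha_1$, $\sigma^2=n\sigma_1^2$ and the size-bias moment identity $E[(N_1^s)^{p-1}]=E[N_1^p]/\alpha_1$.
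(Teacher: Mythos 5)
Your overall strategy (specialize Theorem \ref{maintheo}(a) using the single-coordinate size-bias construction, compute $E[D^2]$ via the moment identity for $N_1^s$, and factor out $n^{-1/2}$ through $\alpha=n\alpha_1$, $\gamma^2=n\gamma_1^2$, $\sigma^2=n\sigma_1^2$) is the right one and matches the paper, and your computation $E[D^2]=\delta_1^3/\alpha_1-\beta_1^2$ is exactly the paper's. But there are two genuine problems. First, your claim that the coupling with an \emph{independent} copy $N_I^s$ yields $D=N_I^s-N_I\geq 0$ is false: an independent size-biased copy is only stochastically larger, not pointwise larger, so $D<0$ occurs with positive probability. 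This is precisely the point of Remark \ref{mtrem}(b) in the paper, which cites Corollary \ref{divisible} as the example where the natural coupling does \emph{not} satisfy $D\geq0$; the paper therefore keeps the fourth summand of Theorem \ref{maintheo}(a) and simply bounds $E\bigl[1_{\{D<0\}}D^2\bigr]\leq E[D^2]$, which is where the combined coefficient $2\alpha_1(a^2b+\abs{a}b^2)$ comes from. You cannot have it both ways: if you instead force $D\geq0$ by coupling $N_I^s$ monotonically to $N_I$, you lose the independence that your computation of $E[D^2]$ (factorization of the cross term) and, more importantly, the computation of $E[D\,|\,N]$ rely on.

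Second, the term you yourself flag as the ``main obstacle,'' namely $\Var\bigl(E[D\,|\,N]\bigr)$, is left undone, and the fallback you offer ($\Var(E[D|N])\leq E[D^2]$) destroys the result: with the prefactor $\alpha a^2/\sigma^2$ this gives a contribution of order $1$ in $n$, not $n^{-1/2}$. With the independent-copy coupling (the one actually used in the paper, with $I=1$ fixed, which suffices since the $N_j$ are i.i.d.) the computation is a one-liner and should be carried out: since $N_1^s$ is independent of $N$ and the $N_j$ are exchangeable, $E[D\,|\,N]=E[N_1^s]-E[N_1\,|\,N]=E[N_1^s]-N/n$, hence $\Var\bigl(E[D\,|\,N]\bigr)=\Var(N)/n^2=\gamma_1^2/n$, which is exactly what produces the third summand of the corollary with its $n^{-1/2}$ scaling. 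So the proposal as written has a real gap (the missing conditional-variance computation) together with an incorrect structural claim ($D\geq0$) that, if taken seriously, would invalidate the moment identities you use; repairing both leads you back to the paper's argument.
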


\begin{proof}
From \cite{GolRin96} (see also \cite{CGS}) it is known that letting $N_1^s$ be independent of $N_1,\dotsc,N_n$ and have the $N_1$-size biased distribution, a random variable with the $N$-size biased distribution is given by 
\[N^s:= N_1^s+\sum_{j=2}^nN_j\,,\quad\text{yielding}\quad D=N_1^s-N_1\,.\]
Thus, by independence and since $N_1,\dotsc,N_n$ are i.i.d., we have 
\[E[D|N]=E[N_1^s]-\frac{1}{n}N\]
and, hence,
\[\Var\bigl(E[D|N]\bigr)=\frac{\Var(N)}{n^2}=\frac{\gamma_1^2}{n}\,.\]
Clearly, we have 
\begin{equation*}
 \alpha=n\alpha_1\,,\quad \gamma^2=n\gamma_1^2\quad\text{and}\quad\sigma^2=n\sigma_1^2\,.
\end{equation*}
Also, using independence and \eqref{sbdef}, 
\begin{align*}
 E[D^2]&=E\bigl[N_1^2-2N_1N_1^s+(N_1^s)^2\bigr]=\beta_1^2-2\alpha_1E[N_1^s]+E\bigl[(N_1^s)^2\bigr]\notag\\
 &=\beta_1^2-2\alpha_1\frac{\beta_1^2}{\alpha_1}+\frac{\delta_1^3}{\alpha_1}=\frac{\delta_1^3}{\alpha_1}-\beta_1^2\,.
\end{align*}
Thus, the bound follows from Theorem \ref{maintheo}.\\
\end{proof}

Very prominent examples of random sums, which are known to be  asymptotically normal, are Poisson and Binomial random sums. The respective bounds, which follow from our abstract findings, are presented in the next two corollaries.

\begin{cor}\label{Poisson}
Suppose that Assumption \ref{genass} holds, let $W$ be given by \eqref{defw} and let $Z$ have the standard normal distribution. Assume further that $N\sim\Poi(\lambda)$ has the Poisson distribution with parameter $\lambda>0$. Then, 
\begin{align*}
d_\W(W,Z)&\leq\frac{1}{\sqrt{\lambda}}\Bigl(\frac{2c^2}{b^2}+\frac{3 d^3}{b^3}+\frac{\abs{a}}{b}\Bigr)\quad\text{and}\\
d_\K(W,Z)&\leq\frac{1}{\sqrt{\lambda}}\biggl(\frac{\sqrt{2\pi}}{4}+1+\frac{(3\sqrt{2\pi}+4) d^3}{8b^3}+\frac{c^3}{b^3}+\Bigl(\frac{7}{2}\sqrt{2}+3\Bigr)\frac{ d^3}{cb^2}\\
&\qquad+\frac{\abs{a}(\sqrt{2\pi}+4+8 d^3)}{8b}+\frac{\abs{a}}{c\sqrt{2\pi}}+\frac{c}{b\sqrt{2\pi}}\biggr)+\frac{c^2}{b^2}e^{-\lambda}+\frac{\abs{a}}{b}e^{-\lambda/2}\,.
\end{align*}
\end{cor}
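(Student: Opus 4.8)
The plan is to read Corollary~\ref{Poisson} off Theorem~\ref{maintheo} by feeding in the canonical size-bias coupling of the Poisson law. By Example~\ref{exdist}~(a), a random variable with the $N$-size biased distribution is $N^s:=N+1$; taking this $N^s$ (which we may clearly arrange to be independent of $X_1,X_2,\dotsc$) gives the coupling $D:=N^s-N\equiv 1$, so in particular $D\geq0$ and both parts of Theorem~\ref{maintheo} are available. First I would record the elementary Poisson parameter identities $\alpha=\gamma^2=\lambda$, which yield
\[
\sigma^2=\alpha c^2+a^2\gamma^2=\lambda(c^2+a^2)=\lambda b^2,\qquad\sigma^3=\lambda^{3/2}b^3,
\]
together with $P(N=0)=e^{-\lambda}$. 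Since $D$ is the deterministic constant $1$, every quantity appearing in Theorem~\ref{maintheo} that is built from $\Var\bigl(E[D\,|\,N]\bigr)$ or from $E\bigl[1_{\{D<0\}}D^2\bigr]$ vanishes, while $E[D^2]=1$, $E[D^2\,|\,N]=1$, and $E\bigl[D\,1_{\{N\geq1\}}N^{-1/2}\bigr]=E\bigl[D^2\,1_{\{N\geq1\}}N^{-1/2}\bigr]=E\bigl[1_{\{N\geq1\}}N^{-1/2}\bigr]$.

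For the Wasserstein estimate this is essentially the entire argument: substituting into Theorem~\ref{maintheo}~(a), the third and fourth summands drop out, and the remaining three collapse, after dividing through by $\sigma^3=\lambda^{3/2}b^3$, to $\lambda^{-1/2}\bigl(2c^2/b^2+3d^3/b^3+\abs{a}/b\bigr)$, which is precisely the asserted bound; no further estimation is needed.

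For the Kolmogorov estimate I would substitute $\alpha=\gamma^2=\lambda$, $\sigma^2=\lambda b^2$, $P(N=0)=e^{-\lambda}$ and $D\equiv1$ into Theorem~\ref{maintheo}~(b) summand by summand. The cancellations $\alpha/\sigma^3=\lambda^{-1/2}b^{-3}$, $\sqrt\alpha/\sigma^2=\lambda^{-1/2}b^{-2}$ and $\alpha/\sigma^2=b^{-2}$ turn every summand except the two carrying $P(N=0)$ and $\sqrt{P(N=0)}$ into a multiple of $\lambda^{-1/2}$, those two exceptional summands becoming exactly $\tfrac{c^2}{b^2}e^{-\lambda}$ and $\tfrac{\abs{a}}{b}e^{-\lambda/2}$. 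The one quantity not yet in closed form is the negative Poisson moment $E\bigl[1_{\{N\geq1\}}N^{-1/2}\bigr]$, which occurs in three of the summands; here I would insert a bound of order $\lambda^{-1/2}$ for it---of exactly the ancillary type gathered in Section~\ref{Appendix}, obtainable e.g.\ from the Laplace representation $N^{-1/2}=\pi^{-1/2}\int_0^\infty t^{-1/2}e^{-tN}\,dt$ valid on $\{N\geq1\}$, or from $E\bigl[1_{\{N\geq1\}}N^{-1/2}\bigr]\leq\bigl(E\bigl[1_{\{N\geq1\}}N^{-1}\bigr]\bigr)^{1/2}$ together with $E\bigl[(N+1)^{-1}\bigr]=(1-e^{-\lambda})/\lambda$. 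Finally, using $c\leq b$ and $c^2\leq b^2$ to tidy a couple of coefficients and regrouping, the $\lambda^{-1/2}$-terms assemble into the bracketed expression in the statement (the $\tfrac{\sqrt{2\pi}}{4}+1$ coming from the $bc^2\alpha/\sigma^3$-term, the $(\tfrac72\sqrt2+3)\,d^3/(cb^2)$ from the $\sqrt\alpha d^3/(c\sigma^2)$-term together with the contribution of the $N^{-1/2}$-term, and so on).

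The only genuinely non-mechanical ingredient is the bound on $E\bigl[1_{\{N\geq1\}}N^{-1/2}\bigr]$; everything else is substitution and collecting like terms. I therefore expect this negative-moment estimate---itself a short, self-contained computation---to be the single place requiring a separate argument, the reduction from Theorem~\ref{maintheo} being entirely routine.
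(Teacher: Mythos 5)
Your proposal is correct and follows essentially the same route as the paper: the coupling $D=N^s-N\equiv1$ from Example \ref{exdist}~(a), the identities $\alpha=\gamma^2=\lambda$, $\sigma^2=\lambda b^2$, $P(N=0)=e^{-\lambda}$, and a bound of order $\lambda^{-1/2}$ on $E\bigl[1_{\{N\geq1\}}N^{-1/2}\bigr]$ (the paper gets $\sqrt{2/\lambda}$ via Jensen and $k+1\leq 2k$, i.e.\ exactly the $E\bigl[(N+1)^{-1}\bigr]$-type comparison you indicate), followed by substitution into Theorem \ref{maintheo}. The only remaining work is the coefficient bookkeeping (with $c\leq b$), which you correctly identify as routine.
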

\begin{proof}
In this case, by Example \ref{exdist} (a), we can choose $D=1$, yielding that 
\[E[D^2]=1\quad\text{and}\quad\Var\bigl(E[D|N]\bigr)=0\,.\]
Note that 
\begin{equation*}
 E\bigl[1_{\{N\geq1\}}N^{-1/2}\bigr]\leq\sqrt{E\bigl[1_{\{N\geq1\}}N^{-1}\bigr]}
\end{equation*}
by Jensen's inequality. Also, using $k+1\leq2k$ for all $k\in\N$, we can bound 
\begin{align*}
 E\bigl[1_{\{N\geq1\}}N^{-1}\bigr]&=e^{-\lambda}\sum_{k=1}^\infty\frac{\lambda^k}{k k!}\leq 2e^{-\lambda}\sum_{k=1}^\infty\frac{\lambda^k}{(k+1) k!}
 =\frac{2}{\lambda}e^{-\lambda}\sum_{k=1}^\infty\frac{\lambda^{k+1}}{(k+1)!}\\
 &=\frac{2}{\lambda}e^{-\lambda}\sum_{l=2}^\infty\frac{\lambda^l}{l!}\leq\frac{2}{\lambda}\,.
\end{align*}
Hence, 
\begin{equation*}
 E\bigl[1_{\{N\geq1\}}N^{-1/2}\bigr]\leq\frac{\sqrt{2}}{\sqrt{\lambda}}\,.
\end{equation*}
Noting that 
\[\alpha=\gamma^2=\lambda\quad\text{and}\quad\sigma^2=\lambda(a^2+c^2)=\lambda b^2\,,\]
the result follows from Theorem \ref{maintheo}.\\
\end{proof}

\begin{remark}\label{poissrem}
 The Berry-Esseen bound presented in Corollary \ref{Poisson} is of the same order of $\lambda$ as the bound given in \cite{KorShev12}, which seems to be the best currently available, but has a worst constant. However, it should 
 be mentioned that the bound in \cite{KorShev12} was obtained using special properties of the Poisson distribution and does not seem likely to be easily transferable to other distributions of $N$.
\end{remark}

\begin{cor}\label{binomial}
Suppose that Assumption \ref{genass} holds, let $W$ be given by \eqref{defw} and let $Z$ have the standard normal distribution. Furthermore, assume that $N\sim\Bin(n,p)$ has the Binomial distribution with parameters $n\in\N$ and $p\in(0,1]$. Then, 
\begin{align*}
d_\W(W,Z)&\leq\frac{1}{\sqrt{np}\bigl(b^2-pa^2\bigr)^{3/2}}\biggl(\bigl(2c^2b+\abs{a}b^2\bigr)(1-p)+3 d^3\\
&\;+\sqrt{\frac{2}{\pi}}a^2p\sqrt{b^2-pa^2}\sqrt{1-p}\biggr)\quad\text{and}\\
d_\K(W,Z)&\leq\frac{1}{\sqrt{np}\bigl(b^2-pa^2\bigr)^{3/2}}\biggl(c^3+\frac{(\sqrt{2\pi}+4)bc^2\sqrt{1-p}}{4}+\frac{(3\sqrt{2\pi}+4) d^3}{8}\\
&\qquad+\frac{\abs{a}b^2\sqrt{1-p}}{2}+\frac{\abs{a}b^2\sqrt{2\pi}(1-p)}{8}\biggr)\\
&\;+\frac{1}{\sqrt{np}\bigl(b^2-pa^2\bigr)}\biggl(\Bigl(\frac{9}{2}\sqrt{2}+2\Bigr)\frac{ d^3}{c}+\sqrt{1-p}\bigl(a^2p+\sqrt{2}\abs{a}b d^3\bigr)\\
&\qquad+\frac{\sqrt{2(1-p)}b\bigl(2b^2-a^2\bigr)}{c\sqrt{2\pi}}\biggr)\\
&\;+\frac{c^2}{b^2-pa^2}(1-p)^n+\frac{\abs{a}b}{b^2-pa^2}(1-p)^{\frac{n+1}{2}}\,.
\end{align*}
\end{cor}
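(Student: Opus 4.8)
The plan is to specialize Theorem~\ref{maintheo} to the size-bias coupling exhibited in Example~\ref{exdist}~(b). Writing $N=\sum_{j=1}^n X_j$ with $X_1,\dots,X_n$ i.i.d.\ $\Bern(p)$ and $N^s=1+\sum_{j=2}^n X_j$, one gets $D=N^s-N=1-X_1\in\{0,1\}$, so in particular $D\geq0$ and part~(b) of Theorem~\ref{maintheo} is available. Since $D$ is $\{0,1\}$-valued, $D^2=D$, hence $E[D^2]=E[D]=1-p$ and $1_{\{D<0\}}D^2=0$; by exchangeability of $X_1,\dots,X_n$ we have $E[X_1\mid N]=N/n$, so $E[D\mid N]=E[D^2\mid N]=1-N/n$ and therefore $\Var\bigl(E[D\mid N]\bigr)=\gamma^2/n^2=p(1-p)/n$ and $E\bigl[(E[D^2\mid N])^2\bigr]=E\bigl[(1-N/n)^2\bigr]=(1-p)^2+p(1-p)/n\leq 1-p$ for $n\geq1$. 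Finally, $\alpha=np$, $\gamma^2=np(1-p)$, $\sigma^2=\alpha c^2+a^2\gamma^2=np(b^2-pa^2)$ and $P(N=0)=(1-p)^n$, so $\alpha/\sigma^2=(b^2-pa^2)^{-1}$ and $\alpha/\sigma^3=\bigl(np\,(b^2-pa^2)^3\bigr)^{-1/2}$.

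The only remaining inputs are the inverse-moment quantities $E\bigl[1_{\{N\geq1\}}N^{-1/2}\bigr]$, $E\bigl[D\,1_{\{N\geq1\}}N^{-1/2}\bigr]$ and $E\bigl[D^2\,1_{\{N\geq1\}}N^{-1/2}\bigr]$ appearing in Theorem~\ref{maintheo}~(b). For the first one I would use the elementary identity $E\bigl[(N+1)^{-1}\bigr]=\bigl(1-(1-p)^{n+1}\bigr)/((n+1)p)$ (proved, as in the Poisson computation of Corollary~\ref{Poisson}, by the binomial identity $\tfrac1{k+1}\binom nk=\tfrac1{n+1}\binom{n+1}{k+1}$) together with $N^{-1}\leq 2(N+1)^{-1}$ on $\{N\geq1\}$ to get $E\bigl[1_{\{N\geq1\}}N^{-1}\bigr]\leq 2/(np)$, and then Jensen's inequality to conclude $E\bigl[1_{\{N\geq1\}}N^{-1/2}\bigr]\leq\sqrt{2/(np)}$. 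For the two terms carrying a factor of $D$, the Cauchy--Schwarz inequality gives $E\bigl[D^k1_{\{N\geq1\}}N^{-1/2}\bigr]\leq\sqrt{E[D^2]}\,\sqrt{E[1_{\{N\geq1\}}N^{-1}]}\leq\sqrt{2(1-p)/(np)}$ for $k=1,2$ (using $D^2\leq1$ for $k=2$), and this is exactly what produces the extra $\sqrt{1-p}$ factors visible in the statement. A final cosmetic step uses $\abs a\leq b$ to merge the contributions $\frac{\abs a b^2}{c\sqrt{2\pi}}$ and $\frac{bc^2}{c\sqrt{2\pi}}$ into $\frac{b(2b^2-a^2)}{c\sqrt{2\pi}}$.

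Substituting all of the above into parts (a) and (b) of Theorem~\ref{maintheo} and collecting the resulting summands by their power of $np(b^2-pa^2)$ — the $\sigma^{-3}$ terms into the first block, the $\sigma^{-2}$ terms into the second, and the two $P(N=0)$ terms into the last line — yields exactly the asserted bounds; for instance the constant $\frac92\sqrt2+2$ arises by adding the coefficient $\frac72\sqrt2+2$ of the $\frac{\sqrt\alpha d^3}{c\sigma^2}$ term to the $\sqrt2$ coming from $\frac{d^3\alpha}{c\sigma^2}E\bigl[1_{\{N\geq1\}}N^{-1/2}\bigr]$, and the $\sqrt{2(1-p)}\,b(2b^2-a^2)/c$ term is the merged $N^{-1/2}$-contribution. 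I do not anticipate any genuine difficulty here: the proof is a direct substitution, and the only thing that really requires care is the bookkeeping — deciding, for each of the many summands of Theorem~\ref{maintheo}~(b), whether the factor $D$ should be estimated via Cauchy--Schwarz (gaining a $\sqrt{1-p}$) or by the crude bound $D\leq1$ — together with the verification of the Binomial inverse-moment estimate $E\bigl[1_{\{N\geq1\}}N^{-1}\bigr]\leq 2/(np)$, which is most cleanly handled through the shifted variable $N+1$ and is the sort of routine computation one would place in Section~\ref{Appendix}.
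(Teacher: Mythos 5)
Your proposal is correct and follows essentially the same route as the paper: the same size-bias coupling $D=1-X_1$ from Example \ref{exdist} (b), the same identities $E[D\,|\,N]=1-N/n$, $E[D^2]=E[D]=1-p$, $\Var\bigl(E[D\,|\,N]\bigr)=p(1-p)/n$, the same Cauchy--Schwarz treatment of the $D$-weighted inverse moments, and the same estimate $E\bigl[1_{\{N\geq1\}}N^{-1}\bigr]\leq 2/(np)$, which you derive through $E\bigl[(N+1)^{-1}\bigr]$ and $N^{-1}\leq 2(N+1)^{-1}$ rather than the paper's direct inequality $\frac1k\binom nk\leq\frac2n\binom{n+1}{k+1}$ -- an immaterial variation. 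The final substitution into Theorem \ref{maintheo} and the merging of terms via $c^2=b^2-a^2$ and $\abs{a}\leq b$ (producing the $b(2b^2-a^2)/c$ coefficient and the constant $\frac92\sqrt2+2$) are exactly as in the paper.
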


\begin{remark}\label{binrem}
Bounds for binomial random sums have also been derived in \cite{Sunk14} using a technique developed in \cite{Tik80}. Our bounds are of the same order $(np)^{-1/2}$ of magnitude.   
\end{remark}

\begin{proof}[Proof of Corollary \ref{binomial}]
Here, we clearly have 
\[\alpha=np\,,\quad \gamma^2=np(1-p)\quad\text{and}\quad\sigma^2=np(a^2(1-p)+c^2)\,.\]
Also, using the same coupling as in Example \ref{exdist} (b) we have $D\sim\Bern(1-p)$, 
\[E[D^2]=E[D]=1-p\quad\text{and}\quad E[D|N]=1-\frac{N}{n}\,.\]
This yields
\[\Var\bigl(E[D|N]\bigr)=\frac{1}{n^2}\Var(N)=\frac{p(1-p)}{n}\,.\]
We have $D^2=D$ and, by Cauchy-Schwarz, 
\begin{equation*}
 E\bigl[D1_{\{N\geq1\}}N^{-1/2}\bigr]\leq\sqrt{E[D^2]}\sqrt{E\bigl[1_{\{N\geq1\}}N^{-1}\bigr]}=\sqrt{1-p}\sqrt{E\bigl[1_{\{N\geq1\}}N^{-1}\bigr]}\,.
\end{equation*}
Using 
\begin{equation*}
 \frac{1}{k}\binom{n}{k}\leq\frac{2}{n+1}\binom{n+1}{k+1}\leq\frac{2}{n}\binom{n+1}{k+1}\,,\quad 1\leq k\leq n\,,
\end{equation*}
we have 
\begin{align*}
 E\bigl[1_{\{N\geq1\}}N^{-1}\bigr]&=\sum_{k=1}^n\frac{1}{k}\binom{n}{k}p^k(1-p)^{n-k}\leq\frac{2}{n}\sum_{k=1}^n\binom{n+1}{k+1}p^k (1-p)^{n-k}\\
 &=\frac{2}{np}\sum_{l=2}^{n+1}\binom{n+1}{l}p^l(1-p)^{n+1-l}\leq\frac{2}{np}\,.
 \end{align*}
Thus, 
\begin{equation*}
 E\bigl[D1_{\{N\geq1\}}N^{-1/2}\bigr]\leq\frac{\sqrt{2(1-p)}}{\sqrt{pn}}\quad\text{and}\quad E\bigl[1_{\{N\geq1\}}N^{-1/2}\bigr]\leq\frac{\sqrt{2}}{\sqrt{np}}\,.
\end{equation*}
Also, we can bound
\begin{equation*}
 E\Bigl[\bigl(E\bigl[D^2\,\bigl|\,N\bigr]\bigr)^2\Bigr]\leq E\bigl[D^4\bigr]=E[D]=1-p\,.
\end{equation*}
Now, using $a^2+c^2=b^2$, the claim follows from Theorem \ref{maintheo}.\\
\end{proof}


\begin{cor}\label{hyper}
 Suppose that Assumption \ref{genass} holds, let $W$ be given by \eqref{defw} and let $Z$ have the standard normal distribution. Assume further that $N\sim\Hyp(n;r,s)$ has the Hypergeometric distribution with parameters $n,r,s\in\N$ such that $n\leq\min\{r,s\}$. Then, 
 \begin{align*}
  d_\W(W,Z)&\leq    \Bigl(\frac{nr}{r+s}\Bigr)^{-1/2}\biggl(\frac{2b}{c}\;\frac{s(r+s-n)}{(r+s)_2}+\frac{3 d^3}{c^3}+\frac{\abs{a}b^2}{c^2}\;\frac{s(r+s-n)}{(r+s)_2}\biggr)\notag\\
  &\;+K\frac{a^2}{c^2}\sqrt{\frac{2}{\pi}}\biggl(\frac{\min\{r,s\}}{n(r+s)}\biggr)^{1/2}
  \quad\text{and}\\
  d_\K(W,Z)&\leq\Bigl(\frac{nr}{r+s}\Bigr)^{-1/2}\Biggl[1+\frac{(\sqrt{2\pi}+4)b}{4c}\Bigl(\frac{s(r+s-n)}{(r+s)_2}\Bigr)^{1/2}\\ 
  &\; +\Bigl(\frac{3\sqrt{2\pi}}{8}+\frac{9}{2}\sqrt{2}+\frac{5}{2}\Bigr)\frac{ d^3}{c^3}
  +\Bigl(\frac{\sqrt{2\pi}}{8}+1\Bigr)\frac{\abs{a}b^2}{c^3}\;\frac{s(r+s-n)}{(r+s)_2}\\
&\; + \Bigl(\abs{a}{b^2}{c^3\sqrt{2\pi}}+\frac{\abs{a}b d^3}{c^2}+\frac{b}{c\sqrt{2\pi}}\Bigr)\biggl(\frac{2s(r+s-n)}{(r+s)_2}\biggr)^{1/2}\Biggr]\\
&\;+\frac{(s)_n}{(r+s)_n}+K\frac{a^2}{c^2}\biggl(\frac{\min\{r,s\}}{n(r+s)}\biggr)^{1/2}+\frac{\abs{a}b}{c^2}\biggl(\frac{(s)_n}{(r+s)_n}\;\frac{s(r+s-n)}{(r+s)_2}\biggr)^{1/2}\,,
 \end{align*}
 where K is a numerical constant and $(m)_n=m(m-1)\cdot\ldots\cdot(m-n+1)$ denotes the lower factorial.
\end{cor}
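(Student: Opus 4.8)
The strategy is to specialize Theorem \ref{maintheo} to $N\sim\Hyp(n;r,s)$ using the explicit $L^1$-optimal coupling $(N,N^s)$ constructed in Example \ref{exdist}~(c). That coupling satisfies $D:=N^s-N\geq0$ --- in fact $D=1_{\{X_1=0\}}\bigl(1-\sum_{j=2}^nY_j\bigr)\in\{0,1\}$, since the distinguished red ball is drawn at most once among the last $n-1$ draws --- so we may invoke part~(b) of Theorem \ref{maintheo}, not merely the Wasserstein estimate. I would first record the elementary quantities: $\alpha=nr/(r+s)$, $\gamma^2=\frac{nrs(r+s-n)}{(r+s)^2(r+s-1)}$, so that $\gamma^2/\alpha=\frac{s(r+s-n)}{(r+s)_2}$, and $\sigma^2=\alpha c^2+a^2\gamma^2$. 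Because $D^2=D$, one has $E[D^2]=E[D]=d_\W(N,N^s)=\frac{s(r+s-n)}{(r+s)_2}$ by Lemma \ref{distlemma}~(b), and by conditional Jensen $E\bigl[(E[D^2\,|\,N])^2\bigr]\leq E[D^4]=E[D]$, while $P(N=0)=\binom sn/\binom{r+s}n=(s)_n/(r+s)_n$.

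The only genuinely new computation is $E[D\,|\,N]$. Conditioning on the outcome of the first draw and using exchangeability gives $P(X_1=0\,|\,N=k)=(n-k)/n$; and given $\{X_1=0,\,N=k\}$, by symmetry among the $r$ red balls the distinguished red ball lies among the $k$ red balls drawn in positions $2,\dots,n$ with probability $k/r$, so $E\bigl[1-\sum_{j\geq2}Y_j\,\bigl|\,X_1=0,\,N=k\bigr]=1-k/r$. Hence $E[D\,|\,N=k]=\frac{(n-k)(r-k)}{nr}$, which one checks averages to the value of $E[D]$ found above. This conditional expectation is a quadratic in $N$, so its exact second-order Taylor expansion about $\alpha$ writes it as a constant plus $g'(\alpha)(N-\alpha)+\frac1{nr}(N-\alpha)^2$; the linear term contributes $g'(\alpha)^2\gamma^2$ to the variance, and the quadratic term is controlled by $E[(N-\alpha)^4]/(nr)^2\leq\gamma^2/r^2$ via the crude bound $\abs{N-\alpha}\leq n$. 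Using $n\leq\min\{r,s\}$ (this is where that hypothesis is needed) together with $\gamma^2\leq nrs/(r+s)^2$, both contributions are $O\bigl(\tfrac{\min\{r,s\}}{n(r+s)}\bigr)$, whence $\Var\bigl(E[D\,|\,N]\bigr)\leq K^2\frac{\min\{r,s\}}{n(r+s)}$ for a numerical constant $K$.

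Finally I would dispose of the terms in Theorem \ref{maintheo}~(b) involving $N^{-1/2}$ exactly as in the proof of Corollary \ref{binomial}: Jensen's and the Cauchy--Schwarz inequalities reduce them to $\sqrt{E[1_{\{N\geq1\}}N^{-1}]}$, possibly times $\sqrt{E[D]}$, and the identity $\frac1k\binom rk=\frac{k+1}{k(r+1)}\binom{r+1}{k+1}\leq\frac2{r+1}\binom{r+1}{k+1}$ combined with Vandermonde's convolution yields $E[1_{\{N\geq1\}}N^{-1}]\leq\frac{2(r+s+1)}{(r+1)(n+1)}\leq\frac2\alpha$. Every remaining prefactor is absorbed by the elementary estimates $\frac\alpha{\sigma^2}\leq c^{-2}$, $\frac\alpha{\sigma^3}\leq c^{-3}\alpha^{-1/2}$, $\frac{\sqrt\alpha}{\sigma^2}\leq c^{-2}\alpha^{-1/2}$ and $\frac{c^2\alpha}{\sigma^2}\leq1$, substituting $b^2=a^2+c^2$ where needed; collecting the summands of the Kolmogorov bound (and the analogous, shorter computation for the Wasserstein bound) produces the displayed inequalities. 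The main obstacle is not conceptual but organizational --- matching each of the roughly fourteen terms of Theorem \ref{maintheo}~(b) against its counterpart in the statement --- the one step requiring real thought being the identification $E[D\,|\,N=k]=\frac{(n-k)(r-k)}{nr}$ and the ensuing variance estimate.
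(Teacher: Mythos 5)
Your proposal is correct and follows essentially the same route as the paper: specialize Theorem \ref{maintheo}\,(b) to the coupling of Example \ref{exdist}\,(c) (so $D=D^2\ge0$, $E[D]=\gamma^2/\alpha$), compute $E[D\,|\,N]=\frac{(n-N)(r-N)}{nr}$, bound $E\bigl[1_{\{N\geq1\}}N^{-1}\bigr]\le 2(r+s)/(nr)$ via the same binomial identity and Vandermonde convolution, use $P(N=0)=(s)_n/(r+s)_n$ and $\sigma^2\ge\alpha c^2$, and assemble. The only divergence is in estimating $\Var\bigl(E[D\,|\,N]\bigr)$: the paper computes it exactly with a computer algebra system and then asserts the order $O\bigl(\min\{r,s\}/(n(r+s))\bigr)$, whereas your Taylor-expansion argument (exact linear term plus the second moment of the quadratic term, using $\abs{N-\alpha}\le n$ and $n\le\min\{r,s\}$) reaches the same bound by hand, which is a perfectly valid and arguably more self-contained way to obtain the constant $K$.
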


\begin{proof}
In this case, we clearly have 
\begin{align*}
 \alpha&=\frac{nr}{r+s}\,,\quad\gamma^2=\frac{nr}{r+s}\;\frac{s}{r+s}\;\frac{r+s-n}{r+s-1}=\frac{nr}{r+s}\;\frac{s(r+s-n)}{(r+s)_2}\quad\text{and}\\
 \sigma^2&=\frac{nr}{r+s}\Bigl(c^2+a^2\frac{s}{r+s}\;\frac{r+s-n}{r+s-1}\Bigr)=\frac{nr}{r+s}\Bigl(c^2+a^2\frac{s(r+s-n)}{(r+s)_2}\Bigr)\,.
\end{align*}
Hence,
\begin{align*}
 c^2\frac{nr}{r+s}\leq\sigma^2\leq\frac{nr}{r+s}\Bigl(c^2+a^2\frac{s}{r+s}\Bigr)=\frac{nr}{r+s}\Bigl(b^2-a^2\frac{r}{r+s}\Bigr)\,.
\end{align*}

We use the coupling constructed in Example \ref{exdist} (c) but write $N$ for $X$ and $N^s$ for $X^s$, here. Recall that we have 
\begin{equation*}
 D=N^s-N=1_{\{X_1=0\}}\Bigl(1-\sum_{j=2}^n Y_j\Bigr)\geq0\quad\text{and}\quad D=D^2\,.
\end{equation*}
Furthermore, we know that 
\begin{equation*}
 E[D]=E[D^2]=d_\W(N,N^s)=\frac{\Var(N)}{E[N]}=\frac{s(r+s-n)}{(r+s)_2}\,.
\end{equation*}

Elementary combinatorics yield
\begin{equation*}
 E\bigl[Y_j\,\bigl|\,X_1,\dotsc,X_n\bigr]=r^{-1}1_{\{X_j=1\}}\,.
\end{equation*}
Thus, 
\begin{align*}
 E\bigl[D\,\bigl|\,X_1,\dotsc,X_n\bigr]&=1_{\{X_1=0\}}-\frac{1}{r}1_{\{X_1=0\}}\sum_{j=2}^n1_{\{X_j=1\}}=1_{\{X_1=0\}}\Bigl(1-\frac{N}{r}\Bigr)\quad\text{and}\\
 E\bigl[D\,\bigl|\,N\bigr]&=\Bigl(1-\frac{N}{r}\Bigr)P\bigl(X_1=0\,\bigl|\,N\bigr)=\Bigl(1-\frac{N}{r}\Bigr)\frac{n-N}{n}\\
 &=\frac{(r-N)(n-N)}{nr}=\Bigl(1-\frac{N}{r}\Bigr)\Bigl(1-\frac{N}{n}\Bigr)\,.
\end{align*}
Using a computer algebra system, one may check that
\begin{align}\label{eps}
 \Var\bigl(E\bigl[D\,\bigl|\,N\bigr]\bigr)&=\Bigl(n r s - n^3 r s - r^2 s + 5 n^2 r^2 s + 2 n^3 r^2 s - 8 n r^3 s - 
   8 n^2 r^3 s+   2 n r s^5 \notag   \\ 
   &\;  - n^3 r^3 s + 4 r^4 s + 10 n r^4 s + 3 n^2 r^4 s- 4 r^5 s - 3 n r^5 s + r^6 s + n s^2\notag\\ 
   &\;  - n^3 s^2 - 2 r s^2 +   4 n^2 r s^2 - 2 n^3 r s^2 - 14 n r^2 s^2 - 4 n^2 r^2 s^2+  n^3 r^2 s^2\notag\\ 
   &\; + 12 r^3 s^2 + 20 n r^3 s^2 + 2 n^2 r^3 s^2 - 
   14 r^4 s^2 - 7 n r^4 s^2 + 4 r^5 s^2 - s^3\notag\\
   &\; - n^2 s^3 + 2 n^3 s^3-   5 n r s^3 + 4 n^2 r s^3 + n^3 r s^3 + 13 r^2 s^3 + 8 n r^2 s^3\notag\\
   &\;-  4 n^2 r^2 s^3 - 18 r^3 s^3 - 3 n r^3 s^3 + 6 r^4 s^3 + n s^4- n^3 s^4 + 6 r s^4 - 4 n r s^4\notag\\ 
   &\;- 2 n^2 r s^4 - 10 r^2 s^4 + 
   3 n r^2 s^4 + 4 r^3 s^4 + s^5 - 2 n s^5 + n^2 s^5 - 2 r s^5\notag\\
   &\; +  r^2 s^5\Bigr)\Bigl(nr(r + s)^2(r+s-1)^2(r+s-2)(r+s-3)\Bigr)^{-1}\notag\\
  & =:\epsilon(n,r,s)\,.
\end{align}
One can check that under the assumption $n\leq\min\{r,s\}$ always 
\begin{equation*}
 \epsilon(n,r,s)=O\biggl(\frac{\min\{r,s\}}{n(r+s)}\biggr)\,.
\end{equation*}
Hence, there is a numerical constant $K$ such that 
\begin{equation*}
 \sqrt{\epsilon(n,r,s)}\leq K\biggl(\frac{\min\{r,s\}}{n(r+s)}\biggr)^{1/2}\,.
\end{equation*}

Also, by the conditional version of Jensen's inequality
\begin{equation*}
 E\Bigl[\bigl(E\bigl[D^2\,\bigl|\,N\bigr]\bigr)^2\Bigr]\leq E\bigl[D^4\bigr]=E[D]=\frac{s(r+s-n)}{(r+s)_2}\,.
\end{equation*}
Using 
\begin{align*}
 E\bigl[N^{-1}1_{\{N\geq1\}}\bigr]&=\binom{r+s}{n}^{-1}\sum_{k=1}^n\frac{1}{k}\binom{r}{k}\binom{s}{n-k}\\
&\leq \frac{2}{r+1}\binom{r+s}{n}^{-1}\sum_{l=2}^{n+1}\binom{r+1}{l}\binom{s}{n+1-l}\\
&\leq \frac{2}{r+1}\binom{r+s}{n}^{-1}\binom{r+1+s}{n+1}=\frac{2(r+s+1)}{(n+1)(r+1)}\leq2\frac{r+s}{nr}\,,
\end{align*}
we get
\begin{align*}
E\bigl[D1_{\{N\geq1\}}N^{-1/2}\bigr]&\leq\sqrt{E[D^2]}\sqrt{E\bigl[1_{\{N\geq1\}}N^{-1}\bigr]}\leq\biggl(2\frac{s(r+s-n)}{(r+s)(r+s-1)}\;\frac{r+s}{nr}\biggr)^{1/2}\\
&=\sqrt{2}\Bigl(\frac{s}{nr}\;\frac{r+s-n}{r+s-1}\Bigr)^{1/2}\quad\text{and}\\
E\bigl[1_{\{N\geq1\}}N^{-1/2}\bigr]&\leq\sqrt{E\bigl[1_{\{N\geq1\}}N^{-1}\bigr]}\leq\sqrt{2}\Bigl(\frac{r+s}{nr}\Bigr)^{1/2}\,.
\end{align*}
Finally, we have
\begin{equation*}
 P(N=0)=\frac{\binom{s}{n}}{\binom{r+s}{n}}=\frac{s(s-1)\cdot\ldots\cdot(s-n+1)}{(r+s)(r+s-1)\cdot\ldots\cdot(r+s-n+1)}=\frac{(s)_n}{(r+s)_n}\,.
\end{equation*}
Thus, the result follows from Theorem \ref{maintheo}.\\
\end{proof}

\begin{remark}\label{hyprem}
\begin{enumerate}[(a)]
\item From the above proof we see that the numerical constant $K$ appearing in the bounds of Corollary \ref{hyper} could in principle be computed explicitly. 
Also, as always 
\begin{equation*}
 \frac{\min\{r,s\}}{n(r+s)}\leq\frac{r+s}{nr}=\frac{1}{E[N]}\,,
\end{equation*}
we conclude that the bounds are of order $E[N]^{-1/2}$. 
\item One typical situation, in which a CLT for Hypergeometric random sums holds, is when $N$, itself, is asymptotically normal. 
Using the same coupling $(N,N^s)$ as in the above proof and the results from \cite{GolRin96}, one obtains that under the condition 
\begin{equation}\label{asno}
 \frac{\max\{r,s\}}{n\min\{r,s\}}\longrightarrow0
\end{equation}
the index $N$ is asymptotically normal. This condition is stricter than that 
\begin{equation}\label{asnors}
 E[N]^{-1}=\frac{r+s}{nr}\longrightarrow0\,,
\end{equation}
which implies the random sums CLT. For instance, choosing
\begin{equation*}
r\propto n^{1+\eps}\,,\quad\text{and}\quad s\propto n^{1+\kappa}
\end{equation*}
with $\eps,\kappa\geq0$, then \eqref{asno} holds, if and only if $\abs{\eps-\kappa}<1$, whereas \eqref{asnors} is equivalent to $\kappa-\eps<1$ in this case.
\end{enumerate}
\end{remark}

Before we end this section by giving the proof of Lemma \ref{distlemma}, we would like to mention in what respects the results in this article could be generalized. Firstly, it would be possible do 
dispense with the assumption of independence among the summands $X_1,X_2,\dotsc$. Of course, the terms appearing in the bounds would look more complicated, but the only essential change would be the emergence of the additional error term
\begin{equation*}
 E_3:=\frac{C}{\alpha\sigma} E\babs{\alpha A_N-\mu N}\,,
\end{equation*}
where 
\begin{equation*}
 A_N:=\sum_{j=1}^N E[X_j]\quad\text{and}\quad \mu=E[A_N]
\end{equation*}
and where $C$ is an explicit constant depending on the probabilistic distance chosen. 
Note that $E_3=0$ if the summands are either i.i.d. or centered. 

Secondly, it would be possible in principle to allow for some dependence among the summands $X_1,X_2,\dotsc$. Indeed, an inspection of the proof in Section \ref{proof} reveals that 
this dependence should be such that for the non-random partial sums bounds on the normal approximation exist and such that suitable couplings with the non-zero biased distribution (see Section \ref{stein} ) 
of those partial sums are available. 
The latter, however, have not been constructed yet in great generality, although \cite{GolRei97} gives a construction for summands forming a simple random sampling in the zero bias case.

It would be much more difficult to abandon the assumption about the independence of the summation index and the summands. This can be seen from Equation \eqref{mp8} below, in which the second identity 
would no longer hold, in general, if this independence was no longer valid. Also, one would no longer be able to freely choose the coupling $(N,N^s)$ when specializing to concrete distributions of $N$. 

\begin{proof}[Proof of Lemma \ref{distlemma}]
Let $h$ be a measurable function such that all the expected values in \eqref{sbdef} exist. By \eqref{sbdef} we have 
\begin{align}\label{sb1}
\Bigl|E[h(X^s)]-E[h(X)]\Bigr|&=\frac{1}{E[X]}\Bigr|E\bigl[\bigl(X-E[X]\bigr)h(X)\bigr]\Bigr|\,.
\end{align}
It is well known that 
\begin{equation}\label{sb2}
d_{TV}(X,Y)=\sup_{h\in\calH}\Bigl|E[h(X)]-E[h(Y)]\Bigr|\,,
\end{equation}
where $\calH$ is the class of all measurable functions on $\R$ such that $\fnorm{h}\leq1/2$.
If $\fnorm{h}\leq1/2$, then 
\begin{equation*}
\frac{1}{E[X]}\Bigr|E\bigl[\bigl(X-E[X]\bigr)h(X)\bigr]\Bigr|\leq\frac{E\abs{X-E[X]}}{2E[X]}\,.
\end{equation*}
Hence, from \eqref{sb2} and \eqref{sb1} we conclude that 
\begin{equation*}\label{sb3}
d_{TV}(X,X^s)\leq\frac{E\abs{X-E[X]}}{2E[X]}\,.
\end{equation*}
On the other hand, letting 
\[h(x):=\frac{1}{2}\Bigl(1_{\{x>E[X]\}}-1_{\{x\leq E[X]\}}\Bigr)\]
in \eqref{sb1} we have $h\in\calH$ and obtain 
\begin{equation*}\label{sb4}
\Bigl|E[h(X^s)]-E[h(X)]\Bigr|=\frac{E\abs{X-E[X]}}{2E[X]}
\end{equation*}
proving the second equality of (a). 
Note that, since $X^s$ is stochastically larger than $X$, we have 
\begin{align}\label{sb8}
d_\K(X,X^s)&=\sup_{t\geq0}\bigl|P(X^s>t)-P(X>t)\bigr|=\sup_{t\geq0}\Bigl(P(X^s>t)-P(X>t)\Bigr)\nonumber\\
&=\sup_{t\geq0}\Bigl(E[g_t(X^s)]-E[g_t(X)]\Bigr)\,,
\end{align}
where $g_t:=1_{(t,\infty)}$.\\
By \eqref{sb1}, choosing $t=E[X]$ yields
\begin{equation}\label{sb10}
 d_\K(X,X^s)\geq E\bigl[\bigl(X-E[X]\bigr)1_{\{X>E[X]\}}\bigr]\,.
\end{equation}
If $0\leq t<E[X]$ we obtain 
\begin{align}\label{sb5}
E\bigl[\bigl(X-E[X]\bigr)1_{\{X>t\}}\bigr]&=E\bigl[\bigl(X-E[X]\bigr)1_{\{t<X\leq E[X]\}}\bigr]
+E\bigl[\bigl(X-E[X]\bigr)1_{\{X>E[X]\}}\bigr]\nonumber\\
&\leq E\bigl[\bigl(X-E[X]\bigr)1_{\{X>E[X]\}}\bigr]\,.
\end{align}
Also, if $t\geq E[X]$, then 
\begin{equation}\label{sb6}
E\bigl[\bigl(X-E[X]\bigr)1_{\{X>t\}}\bigr]\leq E\bigl[\bigl(X-E[X]\bigr)1_{\{X>E[X]\}}\bigr]\,.
\end{equation}
Thus, by \eqref{sb1}, from \eqref{sb8}, \eqref{sb10}, \eqref{sb5} and \eqref{sb6} we conclude that 
\begin{equation}\label{sb9}
 d_\K(X,X^s)= E\bigl[\bigl(X-E[X]\bigr)1_{\{X>E[X]\}}\bigr]\,.
\end{equation}
Now, the remaining claim of (a) can be easily inferred from \eqref{sb9} and from the following two identities:
\begin{align*}
 0&=E\bigl[X-E[X]\bigr]=E\bigl[\bigl(X-E[X]\bigr)1_{\{X>E[X]\}}\bigr]-E\bigl[\bigl(X-E[X]\bigr)1_{\{X\leq E[X]\}}\bigr]\\
 &=E\bigl[\babs{X-E[X]}1_{\{X>E[X]\}}\bigr]-E\bigl[\babs{X-E[X]}1_{\{X\leq E[X]\}}\bigr]
 \end{align*}
 and
 \begin{align*}
 E\babs{X-E[X]}&=E\bigl[\babs{X-E[X]}1_{\{X>E[X]\}}\bigr]+E\bigl[\babs{X-E[X]}1_{\{X\leq E[X]\}}\bigr]\\
 &=2E\bigl[\bigl(X-E[X]\bigr)1_{\{X>E[X]\}}\bigr]\,.
\end{align*}
Finally, if $h$ is $1$-Lipschitz continuous, then 
\begin{align*}
\Bigr|E\bigl[\bigl(X-E[X]\bigr)h(X)\bigr]\Bigr|&=\Bigr|E\bigl[\bigl(X-E[X]\bigr)\bigl(h(X)-h(E[X])\bigr)\bigr]\Bigr|\\
&\leq \fnorm{h'}E\bigl[\abs{X-E[X]}^2\bigr]=\Var(X)\,.
\end{align*} 
On the other hand, the function $h(x):=x-E[X]$ is $1$-Lipschitz and 
\begin{equation*}
E\bigl[\bigl(X-E[X]\bigr)h(X)\bigr]=\Var(X)\,.
\end{equation*}
Thus, also (b) is proved.\\
\end{proof}

\section{Elements of Stein's method}\label{stein}
In this section we review some well-known and also some recent results about Stein's method of normal approximation. Our general reference for this topic is the book \cite{CGS}. Throughout, $Z$ will denote a standard normal random variable. Stein's method originated from Stein's seminal observation 
(see \cite{St72}) that a real-valued random variable $X$ has the standard normal distribution, if and only if the identity
\begin{equation*}\label{steinchar}
 E\bigl[f'(X)\bigr]=E\bigl[Xf(X)\bigr]
\end{equation*}
holds for each, say, continuously differentiable function $f$ with bounded derivative. For a given random variable $W$, which is supposed to be asymptotically normal, and a Borel-measurable test function $h$ on $\R$ with $E\abs{h(Z)}<\infty$ it was then Stein's idea to solve the \textit{Stein equation}  
\begin{equation}\label{steineq}
f'(x)-xf(x)=h(x)-E[h(Z)]
\end{equation}
and to use properties of the solution $f$ and of $W$ in order to bound the right hand side of
\begin{equation*}
\Babs{E\bigl[h(W)\bigr]-E\bigl[h(Z)\bigr]}=\Babs{E\bigl[f'(W)-Wf(W)\bigr]} 
\end{equation*}
rather than bounding the left hand side directly. For $h$ as above, by $f_h$ we denote the standard solution to the Stein equation \eqref{steineq}
which is given by 
\begin{align}\label{standsol}
f_h(x)&=e^{x^2/2}\int_{-\infty}^x\bigl(h(t)-E[h(Z)]\bigr)e^{-t^2/2}dt\nonumber\\
&=-e^{x^2/2}\int_{x}^\infty\bigl(h(t)-E[h(Z)]\bigr)e^{-t^2/2}dt\,.
\end{align}
Note that, generally, $f_h$ is only differentiable and satisfies \eqref{steineq} at the continuity points of $h$. 
In order to be able to deal with distributions which might have point masses, if $x\in\R$ is a point at which $f_h$ is not differentiable, one defines
\begin{equation}\label{deffprime}
f_h'(x):=xf_h(x)+h(x)-E[h(Z)]
\end{equation} 
such that, by definition, $f_h$ satisfies \eqref{steineq} at each point $x\in\R$. This gives a Borel-measurable version of 
the derivative of $f_h$ in the Lebesgue sense.
Properties of the solutions $f_h$ for various classes of test functions $h$ have been studied. Since we are only interested in the Kolmogorov and Wasserstein distances, we either suppose that $h$ is $1$-Lipschitz or that 
$h=h_z=1_{(-\infty,z]}$ for some $z\in\R$. In the latter case we write $f_z$ for $f_{h_z}$.\\
We need the following properties of the solutions $f_h$. If $h$ is $1$-Lipschitz, then it is well known (see e.g. \cite{CGS}) that 
$f_h$ is continuously differentiable and that both $f_h$ and $f_h'$ are Lipschitz-continuous with 
\begin{equation}\label{bounds}
\fnorm{f_h}\leq1\,,\quad\fnorm{f_h'}\leq\sqrt{\frac{2}{\pi}}\quad\text{and }\fnorm{f_h''}\leq2\,.
\end{equation} 
Here, for a function $g$ on $\R$, we denote by 
\[\fnorm{g'}:=\sup_{x\not=y}\frac{\abs{g(x)-g(y)}}{\abs{x-y}}\]
its minimum Lipschitz constant. Note that if $g$ is absolutely continuous, then $\fnorm{g_h'}$ coincides with the essential supremum norm of the derivative of $g$ in the Lebesgue sense. Hence, the double use of the symbol $\fnorm{\cdot}$ does not 
cause any problems. For an absolutely continuous function $g$ on $\R$, a fixed choice of its derivative $g'$ and for $x,y\in\R$ we let 
\begin{equation}\label{resttaylor}
R_g(x,y):=g(x+y)-g(x)-g'(x)y
\end{equation}
denote the remainder term of its first order Taylor expansion around $x$ at the point $x+y$.
If $h$ is $1$-Lipschitz, then we obtain for all $x,y\in\R$ that 
\begin{equation}\label{fhtaylor}
\babs{R_{f_h}(x,y)}=\babs{f_h(x+y)-f_h(x)-f_h'(x)y}\leq y^2\,.
\end{equation}
This follows from \eqref{bounds} via
\begin{align*}
&\babs{f_h(x+y)-f_h(x)-f_h'(x)y}=\Babs{\int_x^{x+y}\bigl(f_h'(t)-f_h'(x)\bigr)dt}\\
&\;\leq\fnorm{f_h''}\Babs{\int_x^{x+y}\abs{t-x}dt}=\frac{y^2\fnorm{f_h''}}{2}\leq y^2\,.
\end{align*}
For $h=h_z$ we list the following properties of $f_z$:
The function $f_z$ has the representation
\begin{equation}\label{steinsol}
f_z(x)=\begin{cases}
\frac{(1-\Phi(z))\Phi(x)}{\phi(x)}\,,&x\leq z\\
\frac{\Phi(z)(1-\Phi(x))}{\phi(x)}\,,&x>z\,.
\end{cases}
\end{equation}
Here, $\Phi$ denotes the standard normal distribution function and $\phi:=\Phi'$ the corresponding continuous density. 
It is easy to see from \eqref{steinsol} that $f_z$ is infinitely often differentiable on $\R\setminus\{z\}$. 
Furthermore, it is well-known that $f_z$ is Lipschitz-continuous with Lipschitz constant $1$ and that it satisfies 
\begin{equation*}\label{boundfz}
0<f_z(x)\leq f_0(0)=\frac{\sqrt{2\pi}}{4}\,,\quad x,z\in\R\,.
\end{equation*}
These properties already easily yield that for all $x,u,v,z\in\R$
\begin{equation}\label{fzdiff}
\babs{(x+u)f_z(x+u)-(x+v)f_z(x+v)}\leq\Biggl(\abs{x}+\frac{\sqrt{2\pi}}{4}\Biggr)\bigl(\abs{u}+\abs{v}\bigr)\,.
\end{equation}
Proofs of the above mentioned classic facts about the functions $f_z$ can again be found in \cite{CGS}, for instance.
As $f_z$ is not differentiable at $z$ (the right and left derivatives do exist but are not equal) by the above Convention \eqref{deffprime} we define 
\begin{equation}\label{deffzprime}
f_z'(z):=zf_z(z)+1-\Phi(z)
\end{equation}
such that $f=f_z$ satisfies \eqref{steineq} with $h=h_z$ for all $x\in\R$. Furthermore, with this definition, 
for all $x,z\in\R$ we have 
\begin{equation}\label{boundfzp}
\abs{f_z'(x)}\leq1\,.
\end{equation}
The following quantitative version of the first order Taylor approximation of $f_z$ has recently been proved by 
Lachi\`{e}ze-Rey and Peccati \cite{LaPec15} and had already been used implicitly in \cite{EiThae14}. Using \eqref{deffzprime}, for all $x,u,z\in\R$ we have 
\begin{align}\label{fztaylor}
\babs{R_{f_z}(x,u)}&=\babs{f_z(x+u)-f_z(x)-f_z'(x)u}\notag\\
&\leq\frac{u^2}{2}\Biggl(\abs{x}+\frac{\sqrt{2\pi}}{4}\Biggr)
+\abs{u}\Bigl(1_{\{x<z\leq x+u\}}+1_{\{x+u\leq z<x\}}\Bigr)\notag\\
&=\frac{u^2}{2}\Biggl(\abs{x}+\frac{\sqrt{2\pi}}{4}\Biggr)
+\abs{u}1_{\bigl\{z-(u\vee 0)<x\leq z-(u\wedge 0)\bigr\}}\,,
\end{align}
where, here and elsewhere, we write $x\vee y:=\max(x,y)$ and $x\wedge y:=\min(x,y)$.\\

For the proof of Theorems \ref{maintheo} and \ref{meanzero} we need to recall a certain coupling construction, which has been efficiently used in Stein's method of normal approximation: 
Let $X$ be a real-valued random variable such that $E[X]=0$ and $0<E[X^2]<\infty$. In \cite{GolRei97} it was proved that there exists a unique distribution 
for a random variable $X^*$ such that for all Lipschitz continuous functions $f$ the identity 
\begin{equation}\label{zbdef}
E[Xf(X)]=\Var(X)E[f'(X^*)]
\end{equation}
holds true. The distribution of $X^*$ is called the \textit{$X$-zero biased distribution} and the distributional transformation which maps $\calL(X)$ to 
$\calL(X^*)$ is called the \textit{zero bias transformation}. It can be shown that \eqref{zbdef} holds for all absolutely continuous functions $f$ on $\R$ such that $E\abs{Xf(X)}<\infty$. From the Stein characterization of the family of normal distributions it is immediate that the fixed points of the zero bias transformation are exactly the centered normal distributions. Thus, if, for a given $X$, the distribution of $X^*$ is close to that of $X$, the distribution of $X$ is approximately a fixed point of this transformation and, hence, should be close to the normal distribution with the same variance as $X$. In \cite{Gol04} this heuristic was made precise by showing the inequality 
\begin{equation*}\label{wasineq}
d_\W(X,\sigma Z)\leq2d_\W(X,X^*)\,,
\end{equation*}   
where $X$ is a mean zero random variable with $0<\sigma^2=E[X^2]=\Var(X)<\infty$, $X^*$ having the $X$-zero biased distribution is defined on the same probability space as $X$ and $Z$ is standard normally distributed. 
For merely technical reasons we introduce a variant of the zero bias transformation for not necessarily centered random variables. Thus, if $X$ is a real random variable with $0<E[X^2]<\infty$, we say that a random variable $X^{nz}$ has the \textit{$X$-non-zero biased distribution}, if for all Lipschitz-continuous functions $f$ it holds that
\begin{equation*}\label{nonzero}
E\bigl[\bigl(X-E[X]\bigr)f(X)\bigr]=\Var(X)E\bigl[f'(X^{nz})\bigr]\,.
\end{equation*}
Existence and uniqueness of the $X$-non-zero biased distribution immediately follow from Theorem 2.1 of \cite{GolRei05b} (or Theorem 2.1 of \cite{Doe13b} by letting 
$B(x)=x-E[X]$, there). Alternatively, letting $Y:=X-E[X]$ and $Y^{*}$ have the $Y$-zero biased distribution, it is easy to see that $X^{nz}:=Y^{*}+E[X]$ fulfills the requirements for the $X$-non-zero biased distribution. Most of the properties of the zero bias transformation have natural analogs for the non-zero bias transformation, so we do not list them all, here. Since an important part of the proof of our main result relies on the so-called \textit{single summand property}, 
however, we state the result for the sake of reference. 

\begin{lemma}[single summand property]\label{single}
Let $X_1,\dots,X_n$ be independent random variables such that $0<E[X_j^2]<\infty$, $j=1,\dotsc,n$. Define $\sigma_j^2:=\Var(X_j)$,\\ $j=1,\dotsc,n$, $S:=\sum_{j=1}^nX_j$ 
and $\sigma^2:=\Var(S)=\sum_{j=1}^n \sigma_j^2$. 
For each $j=1,\dotsc,n$ let $X_j^{nz}$ have the $X_j$-non-zero biased distribution and be independent of\\ $X_1,\dotsc,X_{j-1},X_{j+1},\dotsc,X_n$ and let 
$I\in\{1,\dotsc,n\}$ be a random index, independent of all the rest and such that 
\[P(I=j)=\frac{\sigma_j^2}{\sigma^2}\,,\quad j=1,\dotsc,n\,.\]
Then, the random variable 
\[S^{nz}:=S-X_I+X_I^{nz}=\sum_{i=1}^n 1_{\{I=i\}}\Bigl(\sum_{j\not=i}X_j +X_i^{nz}\Bigr)\]
has the $S$-non-zero biased distribution.
\end{lemma}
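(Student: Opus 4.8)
The plan is to verify the defining identity \eqref{zbdef} (in its non-centered form) for $S^{nz}$ directly, by computing $E[(S-E[S])f(S)]$ via conditioning on the random index $I$ and exploiting the independence structure. First I would reduce to the centered case: setting $Y_j:=X_j-E[X_j]$ and noting $\mu_j:=E[X_j]$, we have $S-E[S]=\sum_j Y_j$, and by the remark preceding the lemma, $X_j^{nz}$ may be taken as $Y_j^*+\mu_j$ where $Y_j^*$ has the $Y_j$-zero biased distribution. Then $S^{nz}=\sum_{j\neq I}X_j+X_I^{nz}= S - Y_I + Y_I^*$, so it suffices to prove the single summand property in the classical zero-bias setting, i.e. that $T^{nz}:=\sum_{j\neq I}Y_j + Y_I^*$ has the $T$-zero biased distribution where $T:=\sum_j Y_j$; the general statement then follows by adding back the constant $E[S]$ and using $f'(x+c)$ in place of $f'(x)$.

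For the centered computation, I would write, for an arbitrary Lipschitz $f$,
\begin{equation*}
E\bigl[Tf(T)\bigr]=\sum_{j=1}^n E\bigl[Y_j f(Y_j + R_j)\bigr]\,,\qquad R_j:=\sum_{i\neq j}Y_i\,,
\end{equation*}
and then treat each summand by conditioning on $R_j$, which is independent of $Y_j$. For fixed $R_j=r$, the function $y\mapsto f(y+r)$ is Lipschitz, so the defining property \eqref{zbdef} of the $Y_j$-zero biased distribution gives $E[Y_j f(Y_j+r)]=\sigma_j^2\, E[f'(Y_j^*+r)]$, where $Y_j^*$ is independent of everything else, in particular of $R_j$. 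Hence $E[Y_jf(Y_j+R_j)]=\sigma_j^2\,E[f'(Y_j^*+R_j)]$. Summing over $j$ and dividing by $\sigma^2=\sum_j\sigma_j^2$ yields
\begin{equation*}
E\bigl[Tf(T)\bigr]=\sigma^2\sum_{j=1}^n \frac{\sigma_j^2}{\sigma^2}\,E\bigl[f'(Y_j^*+R_j)\bigr]=\sigma^2\,E\bigl[f'(Y_I^*+R_I)\bigr]=\sigma^2\,E\bigl[f'(T^{nz})\bigr]\,,
\end{equation*}
where the middle equality uses that $I$ is independent of all the $Y_j$ and $Y_j^*$ with $P(I=j)=\sigma_j^2/\sigma^2$, and the last equality is just the definition of $T^{nz}$. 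By uniqueness of the zero biased distribution this identifies the law of $T^{nz}$, and translating back gives the claim for $S^{nz}$.

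I do not expect a serious obstacle here; the argument is essentially bookkeeping with independence. The one point that needs care is the conditioning step: one must make sure that in $E[Y_j f(Y_j+R_j)]$ the variable $Y_j^*$ used to represent the zero-bias transform is genuinely independent of $R_j$ (this is built into the hypotheses, since $X_j^{nz}$, and hence $Y_j^*$, is assumed independent of $X_1,\dots,X_{j-1},X_{j+1},\dots,X_n$), and that $I$ is independent of the whole collection $\{Y_j,Y_j^*\}$ so that conditioning on $\{I=j\}$ does not disturb the joint law of $(Y_j^*,R_j)$. A secondary technical point is that \eqref{zbdef} must be applied with the test function $y\mapsto f(y+r)$ for (a.e.) fixed $r$ and the resulting identity integrated against the law of $R_j$; this is justified by Fubini together with the extension of \eqref{zbdef} to all absolutely continuous $f$ with $E|Yf(Y)|<\infty$, which is valid here because $f$ is Lipschitz and $E|Y_j|<\infty$. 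With these points observed, the proof is complete.
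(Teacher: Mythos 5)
Your proof is correct and follows essentially the same route as the paper: both reduce to the centered case via $Y_j=X_j-E[X_j]$, $Y_j^*=X_j^{nz}-E[X_j]$, and then invoke the single summand property of the zero-bias transformation, translating back by the shift $E[S]$. The only difference is that you prove the zero-bias identity $E[Tf(T)]=\sigma^2E[f'(Y_I^*+R_I)]$ directly by conditioning on $R_j$ and applying \eqref{zbdef} to $y\mapsto f(y+r)$, which is exactly the computation of Lemma 2.1 of Goldstein and Reinert that the paper cites (and explicitly notes as the "analogous" alternative) instead of reproducing.
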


\begin{proof}
The proof is either analogous to the proof of Lemma 2.1 in \cite{GolRei97} or else, the statement could be deduced from this result in the following way:
Using the fact that $X^{nz}=Y^{*}+E[X]$ has the $X$-non-zero biased distribution if and only if $Y^*$ has the $(X-E[X])$-zero biased distribution, we    
Let $Y_j:=X_j-E[X_j]$, $Y_j^{*}:=X_j^{nz}-E[X_j]$, $j=1,\dotsc,n$ and $W:=\sum_{j=1}^n Y_j=S-E[S]$. Then, from Lemma 2.1 in \cite{GolRei97} we know that 
\begin{align*}
W^*&:=W-Y_I+Y_I^{*}=S-E[S]+\sum_{j=1}^n1_{\{I=j\}}\Bigl(E[X_j]-X_j+X_j^{nz}-E[X_j]\Bigr)\\
&=S-X_I+X_I^{nz}-E[S]=S^{nz}-E[S]
\end{align*}
has the $W$-zero biased distribution, implying that $S^{nz}$ has the $S$-non-zero biased distribution.\\
\end{proof} 

\section{Proof of Theorems \ref{maintheo} and \ref{meanzero}}\label{proof}
From now on we let $h$ be either $1$-Lipschitz or $h=h_z$ for some $z\in\R$ and write $f=f_h$ given by \eqref{standsol}.
Since $f$ is a solution to \eqref{steineq}, plugging in $W$ and taking expectations yields 
\begin{equation}\label{mp1}
E[h(W)]-E[h(Z)]=E[f'(W)-Wf(W)]\,.
\end{equation}
As usual in Stein's method of normal approximation, the main task is to rewrite the term $E[Wf(W)]$ into a more tractable expression be exploiting the structure 
of $W$ and using properties of $f$. From \eqref{defw} we have 
\begin{equation}\label{mp2}
E[Wf(W)]=\frac{1}{\sigma}E[(S-aN)f(W)]+\frac{a}{\sigma}E[(N-\alpha)f(W)]=:T_1+T_2\,.
\end{equation}
For ease of notation, for $n\in\Z_+$ and $M$ any $\Z_+$-valued random variable we let
\begin{equation*}\label{notation}
S_n:=\sum_{j=1}^n X_j,\quad W_n:=\frac{S_n-\alpha a}{\sigma},\quad S_M:=\sum_{j=1}^M X_j\quad\text{and }W_M:=\frac{S_M-\alpha a}{\sigma}\,,
\end{equation*} 
such that, in particular, $S=S_N$ and $W=W_N$. 
Using the decomposition
\[E\bigl[f'(W)\bigr]=\frac{\alpha c^2}{\sigma^2}E\bigl[f'(W)\bigr]+\frac{a^2\gamma^2}{\sigma^2}E\bigl[f'(W)\bigr]\]
which is true by virtue of \eqref{meanvar}, from \eqref{mp1} and \eqref{mp2} we have 
\begin{align}\label{esdec}
E[h(W)]-E[h(Z)]&=E[f'(W)]-T_1-T_2\notag\\
&=E\Bigl[\frac{c^2\alpha}{\sigma^2}f'(W)-\frac{1}{\sigma}(S-aN)f(W)\Bigr]\notag\\
&\;+E\Bigl[\frac{a^2\gamma^2}{\sigma^2}f'(W)-\frac{a}{\sigma}(N-\alpha)f(W)\Bigr]=:E_1+E_2\,.
\end{align}
We will bound the terms $E_1$ and $E_2$ seperately. 
Using the independence of $N$ and $X_1,X_2,\dotsc$ for $T_1$ we obtain:
\begin{align}\label{mp3}
T_1&=\frac{1}{\sigma}\sum_{n=0}^\infty P(N=n)E\bigl[(S_n-na)f(W_n)\bigr]\notag\\
&=\frac{1}{\sigma}\sum_{n=0}^\infty P(N=n)E\bigl[(S_n-na)g(S_n)\bigr]\,,
\end{align}
where 
\[g(x):=f\left(\frac{x-\alpha a}{\sigma}\right)\,.\] 
Thus, if, for each $n\geq0$, $S_n^{nz}$ has the $S_n$-non-zero biased distribution, from \eqref{mp3} and \eqref{nonzero} we obtain that 
\begin{align*}\label{mp4}
T_1&=\frac{1}{\sigma}\sum_{n=0}^\infty P(N=n)\Var(S_n)E\bigl[g'\bigl(S_n^{nz}\bigr)\bigr]\notag\\
&=\frac{c^2}{\sigma^2}\sum_{n=0}^\infty nP(N=n)E\Bigl[f'\Bigl(\frac{S_n^{nz}-\alpha a}{\sigma}\Bigr)\Bigr]\,.
\end{align*}
Note that if we let $M$ be independent of $S_1^{nz}, S_2^{nz},\dotsc$ and have the $N$-size biased distribution, then,  this implies that 
\begin{equation}\label{mp5}
T_1=\frac{c^2\alpha}{\sigma^2}E\Bigl[f'\Bigl(\frac{S_{M}^{nz}-\alpha a}{\sigma}\Bigr)\Bigr]\,,
\end{equation}
 where
\[S_{M}^{nz}=\sum_{n=1}^{\infty}1_{\{M=n\}}S_n^{nz}\,.\]
We use Lemma \ref{single} for the construction of the variables $S_n^{nz}$, $n\in\N$.
Note, however, that by the i.i.d. property of the $X_j$ we actually do not need the mixing index $I$, here.
Hence, we construct independent random variables 
\[(N,M), X_1,X_2,\dotsc\text{ and }Y\]
such that $M$ has the $N$-size biased distribution and such that $Y$ has the $X_1$-non-zero biased distribution. 
Then, for all $n\in\N$ 
\[S_n^{nz}:=S_n-X_1+Y\]
has the $S_n$-non-zero biased distribution and we have 
\begin{equation}\label{mp6}
\frac{S_{M}^{nz}-\alpha a}{\sigma}=\frac{S_{M}-\alpha a}{\sigma}+\frac{Y-X_1}{\sigma}=W_{M}+\frac{Y-X_1}{\sigma}=:W^*\,.
\end{equation}
Thus, from \eqref{mp6} and \eqref{mp5} we conclude that 
\begin{equation}\label{mp7}
T_1=\frac{c^2\alpha}{\sigma^2}E\bigl[f'(W^*)]
\end{equation}
and 
\begin{equation}\label{e1}
 E_1=\frac{c^2\alpha}{\sigma^2}E\bigl[f'(W)-f'(W^*)\bigr]\,.
\end{equation}
We would like to mention that if $a=0$, then, by \eqref{mp7}, $W^*$ has the $W$-zero biased distribution as $T_2=0$ and $\sigma^2=c^2\alpha$ in this case.
Before addressing $T_2$, we remark that the random variables appearing in $E_1$ and $E_2$, respectively, could possibly be defined on different probability spaces, if convenient,
since they do not appear under the same expectation sign. Indeed, for $E_2$ we use the coupling 
$(N,N^s)$, which is given in the statements of Theorems \ref{maintheo} and \ref{meanzero} and which appears in the bounds via the difference $D=N^s-N$. In order to manipulate $E_2$ we thus assume that the random variables 
\begin{equation*}
 (N,N^s),X_1,X_2,\dotsc
\end{equation*}
are independent and that $N^s$ has the $N$-size biased distribution. Note that we do not assume here that $D=N^s-N\geq0$, since sometimes a natural coupling yielding a small value of $\abs{D}$ does not satisfy this nonnegativity condition.
In what follows we will use the notation 
\begin{equation*}\label{defv}
V:=W_{N^s}-W_N=\frac{1}{\sigma}\bigl(S_{N^s}-S_N\bigr)\quad\text{and}\quad J:=1_{\{D\geq0\}}=1_{\{N^s\geq N\}}\,.
\end{equation*}
Now we turn to rewriting $T_2$. Using the independence of $N$ and $X_1,X_2,\dotsc$, and that of $N^s$ and $X_1,X_2,\dotsc$, respectively, $E[N]=\alpha$ 
and the defining equation \eqref{sbdef} of the $N$-size biased distribution, we obtain from \eqref{mp2} that
\begin{align}\label{mp8}
T_2&=\frac{a}{\sigma}E[(N-\alpha)f(W_N)]=\frac{\alpha a}{\sigma}E\bigl[f(W_{N^s})-f(W_N)\bigr]\notag\\
&=\frac{\alpha a}{\sigma}E\Bigl[1_{\{N^s\geq N\}}\bigl(f(W_{N^s})-f(W_N)\bigr)\Bigr]
+\frac{\alpha a}{\sigma}E\Bigl[1_{\{N^s< N\}}\bigl(f(W_{N^s})-f(W_N)\bigr)\Bigr]\notag\\
&=\frac{\alpha a}{\sigma}E\bigl[J\bigl(f(W_N+V)-f(W_N)\bigr)\bigr]
-\frac{\alpha a}{\sigma}E\bigl[(1-J)\bigl(f(W_{N^s}-V)-f(W_{N^s})\bigr)\bigr]\notag\\
&=\frac{\alpha a}{\sigma}E\bigl[JVf'(W_N)\bigr]
+\frac{\alpha a}{\sigma}E\bigl[JR_f(W_N,V)\bigr]\notag\\
&\;+\frac{\alpha a}{\sigma}E\bigl[(1-J)Vf'(W_{N^s})\bigr]
-\frac{\alpha a}{\sigma}E\bigl[(1-J)R_f(W_{N^s},-V)\bigr]\,,
\end{align}
where $R_f$ was defined in \eqref{resttaylor}. Note that we have 
\[JV=1_{\{N^s\geq N\}}\frac{1}{\sigma}\sum_{j=N+1}^{N^s}X_j\quad\text{and}\quad  W_N=\frac{\sum_{j=1}^N X_j-\alpha a}{\sigma}\]
and, hence, the random variables $JV$ and $W_N$
are conditionally independent given $N$. Noting also that 
\begin{align*}
E\bigl[JV\,\bigl|\,N\bigr]&=\frac{1}{\sigma}E\biggl[J\sum_{j=N+1}^{N^s}X_j\,\biggl|\,N\biggr]\\
&=\frac{1}{\sigma}E\biggl[JE\Bigl[\sum_{j=N+1}^{N^s}X_j\,\Bigl|\,N,N^s\Bigr]\,\biggl|\,N\biggr]\\
&=\frac{a}{\sigma}E\bigl[JD\,\bigl|\,N\bigr]=\frac{a}{\sigma}E\bigl[JD\,\bigl|\,N\bigr]
\end{align*}
we obtain that 
\begin{align}\label{mpn1}
\frac{\alpha a}{\sigma}E\bigl[JVf'(W_N)\bigr]
&=\frac{\alpha a}{\sigma}E\bigl[E\bigl[JV\,\bigl|\,N\bigr]E\bigl[f'(W_N)\,\bigl|\,N\bigr]\Bigr]\notag\\
&=\frac{\alpha a^2}{\sigma^2}E\Bigl[E\bigl[JD\,\bigl|\,N\bigr]E\bigl[f'(W_N)\,\bigl|\,N\bigr]\Bigr]\notag\\
&=\frac{\alpha a^2}{\sigma^2}E\Bigl[E\bigl[JDf'(W_N)\,\bigl|\,N\bigr]\Bigr]\notag\\
&=\frac{\alpha a^2}{\sigma^2}E\Bigl[JDf'(W_N)\Bigr]\,,
\end{align}
where we have used for the next to last equality that also $D$ and $W_N$
are conditionally independent given $N$. In a similar fashion, using that $W_{N^s}$ and $1_{\{D<0\}}V$  and also $W_{N^s}$ and $D$ are conditionally independent given $N^s$, one can show
\begin{align}\label{mpn2}
\frac{\alpha a}{\sigma}E\bigl[(1-J)Vf'(W_{N^s})\bigr]
=\frac{\alpha a^2}{\sigma^2}E\bigl[(1-J)Df'(W_{N^{s}})\bigr]\,.
\end{align}
Hence, using that 
\[\frac{\alpha a^2}{\sigma^2}E[D]=\frac{\alpha a^2}{\sigma^2}E[N^s-N] =\frac{\alpha a^2}{\sigma^2}\frac{\gamma^2}{\alpha}=\frac{a^2\gamma^2}{\sigma^2}\]
from \eqref{esdec}, \eqref{mp8}, \eqref{mpn1} and \eqref{mpn2}  we obtain
\begin{align}\label{e2}
 E_2&=\frac{\alpha a^2}{\sigma^2}E\Bigl[\bigl(E[D]-D\bigr)f'(W_N)\Bigr]+\frac{\alpha a^2}{\sigma^2}E\Bigl[(1-J)D\bigl(f'(W_N)-f'(W_{N^s})\bigr)\Bigr]\notag\\
 &\;-\frac{\alpha a}{\sigma}E\bigl[JR_f(W_N,V)\bigr]+\frac{\alpha a}{\sigma}E\bigl[(1-J)R_f(W_{N^s},-V)\bigr]\notag\\
 &=:E_{2,1}+E_{2,2}+E_{2,3}+E_{2,4}\,.
\end{align}
Using the conditional independence of $D$ and $W_N$ given $N$ as well as the Cauchy-Schwarz inequality, we can estimate
\begin{align}\label{e21}
\abs{E_{2,1}}&=\frac{\alpha a^2}{\sigma^2}\Babs{E\Bigl[E\bigl[D-E[D]\,\bigr|\,N\bigr]E\bigl[f'(W_N)\,\bigl|\,N\bigr]\Bigr]}\notag\\
&\leq \frac{\alpha a^2}{\sigma^2}\sqrt{\Var\bigl(E[D\,|\,N]\bigr)}\sqrt{E\Bigl[\bigl(E\bigl[f'(W_N)\,\bigl|\,N\bigr]\bigr)^2\Bigr]}\notag\\
&\leq\frac{\alpha a^2}{\sigma^2}\fnorm{f'}\sqrt{\Var\bigl(E[D\,|\,N]\bigr)}\,.
\end{align}
Now we will proceed by first assuming that $h$ is a $1$-Lipschitz function. In this case, we choose the coupling $(M,N)$ used for $E_1$ in such a way that $M\geq N$. By Remark \ref{sbrem} (a) such a construction of $(M,N)$ is always possible
e.g. via the quantile transformation und that it achieves the Wasserstein distance, i.e. 
\begin{equation*}
E\abs{M-N}=E[M-N]=\frac{E[N^2]}{E[N]}-E[N]=\frac{\Var(N)}{E[N]}=\frac{\gamma^2}{\alpha}=d_\W(N,N^s)\,. 
\end{equation*} 
In order to bound $E_1$, we first derive an estimate for $E\abs{W_M-W_N}$. We have 
\begin{align}\label{boundwscond}
 E\bigl[\abs{W_M-W_N}\,\bigl|\,N,M\bigr]&=\frac{1}{\sigma}E\bigl[\abs{S_{M}-S_N}\,\bigl|\,N,M\bigr]\leq\frac{\abs{M-N}}{\sigma}E\abs{X_1}\notag\\
 &\leq\frac{b(M-N)}{\sigma}
\end{align}
and, hence,
\begin{align}\label{bounddws}
 E\abs{W_M-W_N}&=E\Bigl[E\bigl[\abs{W_M-W_N}\,\bigl|\,N,M\bigr]\Bigr]=\frac{1}{\sigma}E\bigl[\abs{S_{M}-S_N}\,\bigl|\,N,M\bigr]\notag\\
 &\leq\frac{b}{\sigma}E[M-N]= \frac{b\gamma^2}{\sigma\alpha}\,.
\end{align}
Then, using \eqref{bounds}, \eqref{bounddws} as well as the fact that the $X_j$ are i.i.d., for $E_1$ we obtain that
\begin{align}
\abs{E_1}&=\frac{c^2\alpha}{\sigma^2}\Bigl|E\Bigl[f'(W_N)-f'\Bigl(W_M+\frac{Y-X_1}{\sigma}\Bigr)\Bigr]\Bigr|\notag\\
&\leq\frac{2c^2\alpha}{\sigma^2}\Bigl(E\abs{W_N-W_M}+\sigma^{-1}E\abs{Y-X_1}\Bigr)\label{e1gen}\\
&\leq\frac{2c^2\alpha}{\sigma^3}\Bigl(\frac{b\gamma^2}{\alpha}+\frac{3}{2c^2}E\babs{X_1-E[X_1]}^3\Bigr)\notag\\
&=\frac{2c^2b\gamma^2}{\sigma^3}+\frac{3\alpha d^3}{\sigma^3}\label{e1w}\,.
\end{align}
Here, we have used the inequality
\begin{equation}\label{zbdiff}
 E\abs{Y-X_1}=E\babs{Y-E[X_1]-\bigl(X_1-E[X_1]\bigr)}\leq\frac{3}{2\Var(X_1)}E\babs{X_1-E[X_1]}^3\,,
\end{equation}
which follows from an analogous one in the zero-bias framework (see \cite{CGS}) via the fact that $Y-E[X_1]$ has the $(X-E[X_1])$ - zero biased distribution.\\
Similarly to \eqref{boundwscond} we obtain 
\begin{equation*}
 E\bigl[\abs{V}\,\bigl|\,N,N^s\bigr]=E\bigl[\abs{W_{N^s}-W_N}\,\bigl|\,N,N^s\bigr]\leq\frac{b\abs{N^s-N}}{\sigma}=\frac{b\abs{D}}{\sigma}
\end{equation*}
which, together with \eqref{bounds} yields that
\begin{align}\label{e22w}
 \abs{E_{2,2}}&=\frac{\alpha a^2}{\sigma^2}\Babs{E\Bigl[(1-J)D\bigl(f'(W_N)-f'(W_{N^s})\bigr)\Bigr]}\leq\frac{2\alpha a^2}{\sigma^2}E\babs{(1-J)D(W_N-W_{N^s})}\notag\\
 &=\frac{2\alpha a^2}{\sigma^2}E\Bigl[(1-J)\abs{D}E\bigl[\abs{V}\,\bigl|\,N,N^s\bigr]\Bigr]\leq\frac{2\alpha a^2b}{\sigma^3}E\bigl[(1-J)D^2\bigr]\,.
\end{align}
We conclude the proof of the Wasserstein bounds by estimating $E_{2,3}$ and $E_{2,4}$. 
Note that by \eqref{fhtaylor} we have 
\[\babs{R_f(W_N,V)}\leq V^2\quad\text{and}\quad \babs{R_f(W_{N^s},-V)}\leq V^2\]
yielding 
\begin{align}\label{e234w}
\abs{E_{2,3}}+\abs{E_{2,4}}&\leq\frac{\alpha \abs{a}}{\sigma} E\Bigl[\bigl(1_{\{D\geq0\}}+1_{\{D<0\}}\bigr)V^2\Bigr]=\frac{\alpha \abs{a}}{\sigma}E\bigl[V^2\bigr]\,.
\end{align}
Observe that
\begin{align}\label{boundvsq1}
 E[V^2]&=\frac{1}{\sigma^2}E\Bigl[\bigl(S_{N^s}-S_{N}\bigr)^2\Bigr]\notag\\
 &=\frac{1}{\sigma^2}\Bigl(\Var\bigl(S_{N^s}-S_{N}\bigr)+\bigl(E\bigl[S_{N^s}-S_{N}\bigr]\bigr)^2\Bigr)
\end{align}
and
\begin{equation}\label{mpn8}
E\bigl[S_{N^s}-S_{N}\bigr]=E\Bigl[E\bigl[S_{N^s}-S_{N}\,\bigl|\,N,N^s\bigr]\Bigr]=aE[D]=\frac{a\gamma^2}{\alpha}\,.
\end{equation}
Further, from the variance decomposition formula we obtain
\begin{align*}
\Var\bigl(S_{N^s}-S_N\bigr)&=E\bigl[\Var\bigl(S_{N^s}-S_N\,\bigl|\,N,N^s\bigr)\bigr]+\Var\bigl(E\bigl[S_{N^s}-S_N,\bigl|\,N,N^s\bigr]\bigr)\notag\\
&=E\bigl[c^2\abs{D}\bigr]+\Var(aD)=c^2E\abs{D}+a^2\Var(D)\,.
\end{align*}
This together with \eqref{boundvsq1} and \eqref{mpn8} yields the bounds
\begin{align}
E[V^2]&=E\bigl[(W_{N^s}-W_N)^2\bigr]=\frac{1}{\sigma^2}\Bigl(c^2E\abs{D}+a^2E[D^2]\Bigr)\label{formvsq}\\
&\leq \frac{b^2}{\sigma^2}E[D^2]\label{boundvsq}\,,
\end{align}
where we have used the fact that $D^2\geq\abs{D}$ and $a^2+c^2=b^2$ to obtain
\[c^2E\abs{D}+a^2E[D^2]\leq b^2 E[D^2]\,.\]
The asserted bound on the Wasserstein distance between $W$ and $Z$ from Theorem \ref{maintheo} now follows from 
\eqref{bounds}, \eqref{esdec}, \eqref{e2}, \eqref{e1w}, \eqref{e22w}, \eqref{e234w} and \eqref{boundvsq}.\\
If $a=0$, then $E_1$ can be bounded more accurately than we did before. 
Indeed, using \eqref{formvsq} with $N^s=M$ and applying the Cauchy-Schwarz inequality give
\begin{align*}
 E\abs{W_M-W_N}&\leq \sqrt{E\bigl[(W_{M}-W_N)^2\bigr]}=\frac{c}{\sigma}\sqrt{E[M-N]}=\frac{c\gamma}{\sqrt{\alpha}\sigma}\,,
\end{align*}
as $c=b$ in this case. Plugging this into \eqref{e1gen}, we obtain  
\begin{align*}
\abs{E_1}&\leq \frac{2c^2\alpha}{\sigma^2}\Bigl(E\abs{W_M-W_N}+\sigma^{-1}E\abs{Y-X_1}\Bigr)\\
&\leq\frac{2c^3\gamma\sqrt{\alpha}}{\sigma^3}+ \frac{2c^2\alpha}{\sigma^3}E\abs{Y-X_1}\\
&\leq\frac{2c^3\gamma\sqrt{\alpha}}{c^3\alpha^{3/2}}+\frac{3\alpha d^3}{c^3\alpha^{3/2}}\\
&=\frac{2\gamma}{\alpha}+\frac{3 d^3}{c^3\sqrt{\alpha}}\,,
\end{align*}
which is the Wasserstein bound claimed in Theorem \ref{meanzero}.\\\\
Next, we proceed to the proof of the Berry-Esseen bounds in Theorems \ref{maintheo} and \ref{meanzero}.
Bounding the quantities $E_1$, $E_{2,2}$, $E_{2,3}$ and $E_{2,4}$ in the case that $h=h_z$ is much more technically involved. 
Also, in this case we do not in general profit from choosing $M$ appearing in $T_1$ in such a way that $M\geq N$.
This is why we let $M=N^s$ for the proof of the Kolmogorov bound in Theorem \ref{maintheo}. Only for the proof of Theorem \ref{meanzero} we will later assume that $M\geq N$. 
We write $f=f_z$ and introduce the notation
\begin{equation*}
 \Vtilde:=W^*-W=W_{N^s}+\sigma^{-1}(Y-X_1)-W_N=V+\sigma^{-1}(Y-X_1)\,.
\end{equation*}
From \eqref{e1} and the fact that $f$ solves the Stein equation \eqref{steineq} for $h=h_z$ we have 
\begin{align}\label{e1dec}
 E_1&=\frac{c^2\alpha}{\sigma^2}E\bigl[f'(W)-f'(W^*)\bigr]\notag\\
 &=\frac{c^2\alpha}{\sigma^2}E\bigl[Wf(W)-W^*f(W^*)\bigr]+\frac{c^2\alpha}{\sigma^2}\bigl(P(W\leq z)-P(W^*\leq z)\bigr)\notag\\
 &=:E_{1,1}+E_{1,2}\,.
\end{align}
In order to bound $E_{1,1}$ we apply \eqref{fzdiff} to obtain
\begin{equation}\label{e11}
 \abs{E_{1,1}}\leq\frac{c^2\alpha}{\sigma^2}E\Bigl[\abs{\Vtilde}\Bigl(\frac{\sqrt{2\pi}}{4}+\abs{W}\Bigr)\Bigr]\,.
\end{equation}
Using \eqref{formvsq}, \eqref{boundvsq} and \eqref{zbdiff} we have
\begin{align}
E\abs{\Vtilde}&\leq E\abs{V}+\sigma^{-1}E\abs{Y-X_1}\leq\sqrt{E[V^2]}+\frac{3 d^3}{2\sigma c^2}\notag\\
&=\frac{1}{\sigma}\sqrt{c^2E\abs{D}+a^2E[D^2]}+\frac{3 d^3}{2\sigma c^2}\label{e1z1}\\
&\leq\frac{b}{\sigma}\sqrt{E[D^2]}+\frac{3 d^3}{2\sigma c^2}\,.\label{e1z2}
\end{align}
Furthermore, using independence of $W$ and $Y$, we have 
\begin{align}\label{e1z3}
E\babs{(Y-X_1)W}&\leq E\babs{(Y-E[X_1])W}+E\abs{(X_1-E[X_1])W}\notag\\
&=E\babs{Y-E[X_1]}E\abs{W}+E\babs{(X_1-E[X_1])W}\notag\\
&\leq\frac{ d^3}{2c^2}\sqrt{E[W^2]}+\sqrt{\Var(X_1)E[W^2]}=\frac{ d^3}{2c^2} +c\,.
\end{align}
Finally, we have
\begin{align}
E\abs{VW}&\leq\sqrt{E[V^2]}\sqrt{E[W^2]}=\frac{1}{\sigma}\sqrt{c^2E\abs{D}+a^2E[D^2]}\label{e1z4}\\
&\leq\frac{b}{\sigma}\sqrt{E[D^2]}\label{e1z5}\,.
\end{align}
From \eqref{e11}, \eqref{e1z1}, \eqref{e1z2}, \eqref{e1z3}, \eqref{e1z4} and \eqref{e1z5} we conclude that 
\begin{align}
\abs{E_{1,1}}&\leq\frac{c^2\alpha}{\sigma^2}\Bigl(\frac{\sqrt{2\pi}}{4\sigma}\sqrt{c^2E\abs{D}+a^2E[D^2]}+\frac{3 d^3\sqrt{2\pi}}{8c^2\sigma}+\frac{ d^3}{2c^2\sigma} +\frac{c}{\sigma}\notag\\
&\;+\frac{1}{\sigma}\sqrt{c^2E\abs{D}+a^2E[D^2]}\Bigr)\notag\\
&=\frac{c^2\alpha(\sqrt{2\pi}+4)}{4\sigma^3}\sqrt{c^2E\abs{D}+a^2E[D^2]}+\frac{ d^3\alpha(3\sqrt{2\pi}+4)}{8\sigma^3}+ \frac{c^3\alpha}{\sigma^3}\label{e1z6}\\
&\leq\frac{(\sqrt{2\pi}+4)bc^2\alpha}{4\sigma^3}\sqrt{E[D^2]}+\frac{ d^3\alpha(3\sqrt{2\pi}+4)}{8\sigma^3}+ \frac{c^3\alpha}{\sigma^3}=:B_1\,.\label{e1z7}
\end{align}
In order to bound $E_{1,2}$ we need the following lemma, which will be proved in Section \ref{Appendix}.
In the following we denote by $C_\K$ the Berry-Esseen constant for sums of i.i.d. random variables with finite third moment. It is known from \cite{Shev11} that
\begin{equation*}
 C_\K\leq 0.4748\,.
\end{equation*}
In particular, $2C_\K\leq1$, which is substituted for $2C_\K$ in the statements of Theorems \ref{maintheo} and \ref{meanzero}. However, we prefer keeping the dependence of the bounds on $C_\K$ explicit within the proof.
\begin{lemma}\label{ce2}
 With the above assumptions and notation we have for all $z\in\R$
\begin{align}
 \babs{P(W^*\leq z)-P(W_{N^s}\leq z)}&\leq\frac{1}{\sqrt{\alpha}}\Bigl(\frac{7}{2}\sqrt{2}+2\Bigr)\frac{ d^3}{c^3}\label{cers1}\quad\text{and}\\
 \babs{P(W_{N^s}\leq z)-P(W\leq z)}&\leq P(N=0)+\frac{b}{c\sqrt{2\pi}}E\bigl[D1_{\{D\geq0\}}N^{-1/2}1_{\{N\geq1\}}\bigr]\notag\\
&\; +\frac{2C_\K d^3}{c^3} E\bigl[1_{\{D\geq0\}}N^{-1/2}1_{\{N\geq1\}}\bigr]\notag\\
&\;+\frac{1}{\sqrt{\alpha}}\Bigl(\frac{b}{c\sqrt{2\pi}}\sqrt{E\bigl[D^21_{\{D<0\}}\bigr]}+\frac{2C_\K d^3}{c^3}\sqrt{P(D<0)}\Bigr)\,.\label{cers2}
\end{align}
If $a=0$ and $D\geq0$, then for all $z\in\R$
\begin{align}
\babs{P(W_{N^s}\leq z)-P(W\leq z)}&\leq P(N=0)+\frac{2C_\K d^3}{c^3} E\bigl[N^{-1/2}1_{\{N\geq1\}}\bigr]\notag\\
&\;+\frac{1}{\sqrt{2\pi}}E\bigl[\sqrt{D}N^{-1/2}1_{\{N\geq1\}}\bigr]\label{cers3}\\
&\leq P(N=0)+\biggl(\frac{2C_\K d^3}{c^3}+\frac{\gamma}{\sqrt{\alpha}\sqrt{2\pi}}\biggr) \sqrt{E\bigl[1_{\{N\geq1\}}N^{-1}\bigr]}\label{cers4}\,.
\end{align}
\end{lemma}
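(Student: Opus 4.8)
The three estimates will all follow from one device: after replacing a single summand, each difference of indicator probabilities is rewritten as the probability that a normalized partial sum of i.i.d. copies of $X_1$ lands in a short random interval, and that probability is then controlled by the Berry-Esseen theorem for i.i.d. sums (with constant $C_\K$) together with the Lipschitz bound $\fnorm{\phi}=(2\pi)^{-1/2}$ for the standard normal distribution function.

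For \eqref{cers1} I would condition on $N^s$. Putting $U_n:=\sigma^{-1}\bigl(\sum_{j=2}^nX_j-\alpha a\bigr)$ (with $U_1:=-\alpha a/\sigma$), on $\{N^s=n\}$ one has $W_{N^s}=U_n+\sigma^{-1}X_1$ and $W^*=U_n+\sigma^{-1}Y$, and $U_{N^s}$ is independent of $X_1$ and of $Y$. Hence $P(W^*\le z)-P(W_{N^s}\le z)=E\bigl[F_{U_{N^s}}(z-\sigma^{-1}Y)\bigr]-E\bigl[F_{U_{N^s}}(z-\sigma^{-1}X_1)\bigr]$, where $F_{U_{N^s}}$ is the distribution function of $U_{N^s}$; since each term depends only on the one-dimensional law of $Y$, respectively $X_1$, I may realize $X_1$ and $Y$ on a common space via any coupling, in particular one with $E\abs{Y-X_1}\le\frac{3d^3}{2c^2}$, which exists by \eqref{zbdiff}. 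Then $\babs{P(W^*\le z)-P(W_{N^s}\le z)}\le E\bigl[P(U_{N^s}\in I)\bigr]$, where $I$ is the interval with endpoints $z-\sigma^{-1}Y$ and $z-\sigma^{-1}X_1$ (length $\sigma^{-1}\abs{Y-X_1}$), independent of $U_{N^s}$. Conditioning again on $N^s$: for $n\ge2$, the Berry-Esseen theorem applied to $\frac{1}{c\sqrt{n-1}}\sum_{j=2}^n(X_j-a)$ gives $P(U_n\in I)\le\frac{\sigma\abs{I}}{c\sqrt{2\pi(n-1)}}+\frac{2C_\K d^3}{c^3\sqrt{n-1}}$ for every interval $I$, while for $n=1$ the variable $U_1$ is constant and $P(U_1\in I)\le1$. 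Taking expectations and using $E\abs{Y-X_1}\le\frac{3d^3}{2c^2}$, the estimate $E\bigl[(N^s-1)^{-1/2}1_{\{N^s\ge2\}}\bigr]^2\le E\bigl[(N^s-1)^{-1}1_{\{N^s\ge2\}}\bigr]=\alpha^{-1}\sum_{k\ge2}\tfrac{k}{k-1}P(N=k)\le 2/\alpha$, the bound $P(N^s=1)=P(N=1)/\alpha\le\alpha^{-1/2}$ (valid since $P(N=1)\le\min\{1,\alpha\}\le\sqrt\alpha$), and $d^3/c^3\ge1$, one arrives at a bound of the form $\alpha^{-1/2}(\mathrm{const})\,d^3/c^3$; chasing the constants (with $2C_\K\le1$) yields \eqref{cers1}.

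For \eqref{cers2} I would decompose $P(W_{N^s}\le z)-P(W\le z)=E\bigl[1_{\{W_N+V\le z\}}-1_{\{W_N\le z\}}\bigr]$ along the partition $\{N=0\}$, $\{N\ge1,\,D\ge0\}$, $\{D<0\}$ (note $\{N=0\}\subseteq\{D\ge1\}$, as $N^s\ge1$). On $\{N=0\}$ the integrand is bounded by $1$ and contributes $P(N=0)$. On $\{N\ge1,\,D\ge0\}$, condition on $N$: given $N=n\ge1$, $W_N=W_n$ is the standardized sum of $n$ i.i.d. summands, while $V1_{\{D\ge0\}}=\sigma^{-1}1_{\{N^s\ge n\}}\sum_{j=n+1}^{N^s}X_j$ is a function of $(N^s,X_{n+1},X_{n+2},\dots)$, hence conditionally independent of $W_n$; bounding the indicator difference by $1_{\{W_n\in I\}}$ with $\abs{I}=\abs{V}$ and invoking Berry-Esseen as above gives the conditional bound $\frac{\sigma\abs{V}}{c\sqrt{2\pi n}}+\frac{2C_\K d^3}{c^3\sqrt n}$; taking expectations and using $E\bigl[\abs{V}1_{\{D\ge0\}}\mid N,N^s\bigr]\le\sigma^{-1}bD1_{\{D\ge0\}}$ produces the terms $\frac{b}{c\sqrt{2\pi}}E\bigl[D1_{\{D\ge0\}}N^{-1/2}1_{\{N\ge1\}}\bigr]$ and $\frac{2C_\K d^3}{c^3}E\bigl[1_{\{D\ge0\}}N^{-1/2}1_{\{N\ge1\}}\bigr]$. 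On $\{D<0\}$, condition on $N^s$ instead (always $\ge1$): given $N^s=n^s$, $W_{N^s}=W_{n^s}$ is a standardized sum of $n^s$ summands and $-V1_{\{D<0\}}$ is a function of $(N,X_{n^s+1},\dots)$, hence conditionally independent of $W_{n^s}$; the same interval/Berry-Esseen estimate, together with $E\bigl[\abs{V}1_{\{D<0\}}\mid N,N^s\bigr]\le\sigma^{-1}b\abs{D}1_{\{D<0\}}$, the Cauchy-Schwarz inequality, and $E\bigl[(N^s)^{-1}\bigr]=\alpha^{-1}P(N\ge1)\le\alpha^{-1}$, yields the $\alpha^{-1/2}$-term. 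Summing the three contributions gives \eqref{cers2}. Bound \eqref{cers3} is the specialization $a=0$ of the $\{N=0\}$ and $\{N\ge1,D\ge0\}$ parts (the event $\{D<0\}$ is excluded), the improvement being that now $E\bigl[V^2\mid N,N^s\bigr]=\sigma^{-2}c^2D$ exactly, so $E\bigl[\abs{V}\mid N,N^s\bigr]\le\sigma^{-1}c\sqrt D$ and the length term becomes $\frac{1}{\sqrt{2\pi}}E\bigl[\sqrt D\,N^{-1/2}1_{\{N\ge1\}}\bigr]$; finally \eqref{cers4} follows from \eqref{cers3} by Cauchy-Schwarz, using $E[D]=E[N^s-N]=\gamma^2/\alpha$ so that $\sqrt{E[D]}=\gamma/\sqrt\alpha$, together with $E\bigl[N^{-1/2}1_{\{N\ge1\}}\bigr]\le\bigl(E[N^{-1}1_{\{N\ge1\}}]\bigr)^{1/2}$.

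The argument is not conceptually hard, so the main obstacle is organizational: one must carefully justify the conditional-independence claims---that the increment ($Y-X_1$ in \eqref{cers1}, $V$ in \eqref{cers2}--\eqref{cers4}) is conditionally independent of the relevant "base" partial sum given the appropriate index---so that the interval-probability estimate is legitimate; one must treat the boundary cases of small index values ($N^s=1$ in \eqref{cers1}, $N=0$ in \eqref{cers2}--\eqref{cers4}) in such a way that they are absorbed into the $\alpha^{-1/2}$ rate rather than destroying it; and one must track the numerical constants through the Berry-Esseen inequality and the bound $\fnorm{\phi}=(2\pi)^{-1/2}$ to reach the explicit coefficients stated in the lemma.
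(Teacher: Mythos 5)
Your proposal is correct, and for \eqref{cers2}--\eqref{cers4} it is essentially the paper's own argument: the same decomposition via Lemma \ref{indlemma} into the events $\{N=0\}$, $\{N\geq1, D\geq0\}$ (conditioning on $N$) and $\{D<0\}$ (conditioning on $N^s$), the same conditional-independence observations, the same conditional Cauchy--Schwarz bounds $E[\abs{V}\,|\,N,N^s]\leq\sigma^{-1}\sqrt{c^2\abs{D}+a^2D^2}\leq\sigma^{-1}b\abs{D}$ (respectively $\sigma^{-1}c\sqrt{D}$ when $a=0$), and the identity $E[(N^s)^{-1}]=P(N\geq1)/\alpha$ together with $E[D]=\gamma^2/\alpha$ for \eqref{cers4}. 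Where you genuinely deviate is \eqref{cers1}: the paper keeps the normalization $c\sqrt{N^s}$ for the sum $\sum_{j=2}^{N^s}(X_j-a)$, which has one summand too few for the classical Berry--Esseen theorem, and therefore invokes the randomized ``leave-one-out'' concentration inequality of Chen--Goldstein--Shao (Lemma \ref{cecgs}), whose constants $\sqrt{2}$ and $2(\sqrt{2}+1)$ produce exactly $\bigl(\tfrac{7}{2}\sqrt{2}+2\bigr)d^3/(c^3\sqrt{\alpha})$. You instead renormalize by $c\sqrt{N^s-1}$, apply the ordinary Berry--Esseen theorem on $\{N^s\geq2\}$, and handle $\{N^s=1\}$ separately via $P(N^s=1)=P(N=1)/\alpha\leq\alpha^{-1/2}$, absorbing that term with the Lyapunov inequality $d^3\geq c^3$ and using $E[(N^s-1)^{-1}1_{\{N^s\geq2\}}]\leq2/\alpha$; with $2C_\K\leq1$ this chase gives a constant of about $1+\tfrac{3}{2\sqrt{\pi}}+\sqrt{2}\approx3.26$, which is smaller than $\tfrac{7}{2}\sqrt{2}+2\approx6.95$ and hence implies \eqref{cers1}. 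So your route buys a slightly better constant at the price of the extra boundary case and the factors $\sqrt{2}$ from $E[(N^s-1)^{-1}1_{\{N^s\geq2\}}]\leq2/\alpha$, while the paper's use of Lemma \ref{cecgs} avoids any case distinction and any appeal to $d^3\geq c^3$; your re-coupling remark for $(X_1,Y)$ is harmless but unnecessary, since in the paper's construction $Y$ is already independent of $X_1$ and \eqref{zbdiff} is stated for exactly that coupling.
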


Applying the triangle inequality to Lemma \ref{ce2} yields the following bounds on $E_{1,2}$: In the most general situation (Theorem \ref{maintheo} and Remark \ref{mtrem} (b)) we have
\begin{align}\label{e12gen}
 \abs{E_{1,2}}&\leq\Bigl(\frac{7}{2}\sqrt{2}+2\Bigr)\frac{\sqrt{\alpha} d^3}{c\sigma^2}+\frac{c^2\alpha}{\sigma^2}P(N=0)+\frac{\alpha bc}{\sigma^2\sqrt{2\pi}}E\bigl[D1_{\{D\geq0\}}N^{-1/2}1_{\{N\geq1\}}\bigr]\notag\\
 &\;+\frac{2C_\K d^3\alpha}{c\sigma^2}E\bigl[1_{\{D\geq0\}}N^{-1/2}1_{\{N\geq1\}}\bigr]+\frac{cb\sqrt{\alpha}}{\sigma^2\sqrt{2\pi}}\sqrt{E\bigl[D^21_{\{D<0\}}\bigr]}\notag\\
&\; +\frac{\alpha C_\K d^3}{c\sigma^2}\sqrt{P(D<0)}=:B_2\,.
\end{align}
If $a=0$ and $D\geq0$, then, keeping in mind that $\sigma^2=\alpha c^2$ in this case, 
\begin{align}
\abs{E_{1,2}}&\leq\Bigl(\frac{7}{2}\sqrt{2}+2\Bigr)\frac{ d^3}{c^3\sqrt{\alpha}}+P(N=0)+ \frac{2C_\K d^3}{c^3}E\bigl[N^{-1/2}1_{\{N\geq1\}}\bigr]\notag\\
&\;+\frac{1}{\sqrt{2\pi}}E\bigl[\sqrt{D}N^{-1/2}1_{\{N\geq1\}}\bigr]\notag\\
&\leq \Bigl(\frac{7}{2}\sqrt{2}+2\Bigr)\frac{ d^3}{c^3\sqrt{\alpha}}+P(N=0)
+\biggl(\frac{2C_\K d^3}{c^3}+\frac{\gamma}{\sqrt{\alpha}\sqrt{2\pi}}\biggr)\sqrt{E\bigl[1_{\{N\geq1\}}N^{-1}\bigr]}\,.\label{e12mz2}
\end{align}
The following lemma, which is also proved in Section \ref{Appendix}, will be needed to bound the quantities $E_{2,2}$, $E_{2,3}$ and $E_{2,4}$ from \eqref{e2}.
\begin{lemma}\label{auxlemma2}
With the above assumptions and notation we have 
\begin{align}
&E\bigl[J\abs{V}1_{\{z-(V\vee0)<W\leq z-(V\wedge0)\}}\bigr]\leq\frac{b}{\sigma}\sqrt{P(N=0)}\sqrt{E[JD^2]}\notag\\
&\;+\frac{b^2}{c\sigma\sqrt{2\pi}}E\bigl[JD^21_{\{N\geq1\}}N^{-1/2}\bigr]+\frac{2C_\K d^3 b}{c^3\sigma} E\bigl[JD1_{\{N\geq1\}}N^{-1/2}\bigr]\,,\label{aux21}\\
&E\bigl[(1-J)\abs{V}1_{\{z+(V\wedge0)<W_{N^s}\leq z+(V\vee0)\}}\bigr]\leq\frac{b^2}{c\sigma\sqrt{2\pi}}E\bigl[(1-J)D^2(N^s)^{-1/2}\bigr]\notag\\
&\;+\frac{2C_\K b d^3}{c^3\sigma\sqrt{\alpha}}\sqrt{E\bigl[(1-J)D^2\bigr]}\quad\text{and}\label{aux22}\\
&E\bigl[(1-J)\abs{D}1_{\{z+(V\wedge0)<W_{N^s}\leq z+(V\vee0)\}}\bigr]\leq\frac{b}{c\sqrt{2\pi}}E\bigl[(1-J)D^2(N^s)^{-1/2}\bigr]\notag\\
&\;+\frac{2C_\K  d^3}{c^3\sqrt{\alpha}}\sqrt{E\bigl[(1-J)D^2\bigr]}\label{aux23} \,.
\end{align}
\end{lemma}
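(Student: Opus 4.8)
The plan is to prove all three estimates in parallel by \emph{conditioning on the pair $(N,N^s)$} and exploiting that, given $(N,N^s)$, the increment $V$ is independent of $W=W_N$ and of $W_{N^s}$. Indeed, on $\{N^s\geq N\}$ one has $V=\sigma^{-1}\sum_{j=N+1}^{N^s}X_j$, which given $(N,N^s)$ is a function of $X_{N+1},\dots,X_{N^s}$ only, whereas $W=W_N=\sigma^{-1}\bigl(\sum_{j=1}^N X_j-\alpha a\bigr)$ is a function of $X_1,\dots,X_N$; since the $X_j$ are i.i.d.\ and independent of $(N,N^s)$, the asserted conditional independence follows, and symmetrically on $\{N^s<N\}$ the variable $V=-\sigma^{-1}\sum_{j=N^s+1}^{N}X_j$ is conditionally independent of $W_{N^s}=\sigma^{-1}\bigl(\sum_{j=1}^{N^s}X_j-\alpha a\bigr)$ given $(N,N^s)$. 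The quantitative input is a one-dimensional interval estimate: for every integer $m\geq 1$ and every $v\in\R$, writing $I(v)$ for either of the intervals of length $\abs v$ occurring in \eqref{aux21}--\eqref{aux23} (namely $(z-(v\vee 0),z-(v\wedge 0)]$ or $(z+(v\wedge 0),z+(v\vee 0)]$), one has, by the classical Berry--Esseen theorem applied to $\frac{S_m-ma}{c\sqrt{m}}$ at both endpoints (whose limiting normal density is bounded by $1/\sqrt{2\pi}$, and rescaling to $W_m$ multiplies the interval length by $\sigma/(c\sqrt{m})$),
\begin{equation*}
P\bigl(W_m\in I(v)\bigr)\leq\frac{\sigma\abs v}{c\sqrt{m}\,\sqrt{2\pi}}+\frac{2C_\K d^3}{c^3\sqrt{m}}\,.
\end{equation*}

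For \eqref{aux21} I would first split according to $\{N=0\}$ and $\{N\geq 1\}$. On $\{N=0\}$ the indicator is bounded by $1$, and since $E\bigl[\abs V\mid N,N^s\bigr]\leq bD/\sigma$ on $\{D\geq 0\}$, this piece is at most $\frac b\sigma E[JD\,1_{\{N=0\}}]\leq\frac b\sigma\sqrt{P(N=0)}\sqrt{E[JD^2]}$ by Cauchy--Schwarz (using $J^2=J$). On $\{N\geq 1\}$ I would condition on $(N,N^s)$, ``freeze'' $V$ (legitimate since the random interval $I(V)$ depends only on $V\vee 0$ and $V\wedge 0$, hence is conditionally independent of $W$), apply the interval estimate with $m=N$ to $W=W_N$, and then use $E\bigl[\abs V\mid N,N^s\bigr]\leq bD/\sigma$ and $E\bigl[V^2\mid N,N^s\bigr]=\sigma^{-2}(Dc^2+D^2a^2)\leq b^2D^2/\sigma^2$ on $\{D\geq 0\}$ (valid because $D\leq D^2$ for nonnegative integers and $a^2+c^2=b^2$). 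Collecting the two resulting terms gives precisely the density term $\frac{b^2}{c\sigma\sqrt{2\pi}}E[JD^2 1_{\{N\geq 1\}}N^{-1/2}]$ and the Berry--Esseen term $\frac{2C_\K d^3 b}{c^3\sigma}E[JD\,1_{\{N\geq 1\}}N^{-1/2}]$, which is \eqref{aux21}.

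Estimates \eqref{aux22} and \eqref{aux23} are handled the same way, now conditioning on $(N,N^s)$ and applying the interval estimate to $W_{N^s}$ with $m=N^s$; here no $\{N^s\geq 1\}$ caveat is needed because $N^s$, having the size-biased law of $N$, puts no mass at $0$. On $\{D<0\}$ one uses $E\bigl[\abs V\mid N,N^s\bigr]\leq b\abs D/\sigma$ and $E\bigl[V^2\mid N,N^s\bigr]\leq b^2D^2/\sigma^2$; the extra factor $\abs V$ in \eqref{aux22} relative to \eqref{aux23} accounts for the extra factor $b/\sigma$ between the two bounds. The Berry--Esseen contributions come out naturally as multiples of $E\bigl[(1-J)\abs D(N^s)^{-1/2}\bigr]$, and I would bound this by Cauchy--Schwarz as $\sqrt{E[(1-J)D^2]}\,\sqrt{E[(N^s)^{-1}]}$, using that by the defining identity \eqref{sbdef} of the size-biased law, $E[(N^s)^{-1}]=\alpha^{-1}E\bigl[N\cdot N^{-1}1_{\{N\geq 1\}}\bigr]=\alpha^{-1}P(N\geq 1)\leq\alpha^{-1}$, which produces the $1/\sqrt{\alpha}$ factors in \eqref{aux22} and \eqref{aux23}. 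The main obstacle is bookkeeping rather than anything deep: one must be careful that the random interval depends on $V$ only through $V\vee 0,V\wedge 0$ so that the ``freezing'' of $V$ inside the conditional expectation is justified, and one has to carry the indicators $J$, $1-J$, $1_{\{N=0\}}$, $1_{\{N\geq 1\}}$ cleanly through the conditioning; the small arithmetic point $D\leq D^2$ (resp.\ $\abs D\leq D^2$) for integer $D$ is what allows the mixed conditional second moment $Dc^2+D^2a^2$ to be absorbed into $b^2D^2$.
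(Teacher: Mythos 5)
Your argument is correct and is essentially the paper's own proof: your ``interval estimate'' is exactly the concentration inequality of Lemma \ref{ce1} (Berry--Esseen plus the anti-concentration bound \eqref{ce1eq1}), and your conditioning on $(N,N^s)$ with the conditional independence of $V$ and $W_N$ (resp.\ $W_{N^s}$), the bounds $E[\abs{V}\mid N,N^s]\leq b\abs{D}/\sigma$, $E[V^2\mid N,N^s]\leq b^2D^2/\sigma^2$, the Cauchy--Schwarz step for the $\{N=0\}$ piece, and $E[(N^s)^{-1}]\leq 1/\alpha$ are precisely the ingredients used there. The only difference is cosmetic (conditioning on the pair $(N,N^s)$ rather than summing over the events $\{N=n,\,D\geq 0\}$), and you additionally write out \eqref{aux22}--\eqref{aux23}, which the paper omits as ``similar and easier.''
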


Next, we derive a bound on $E_{2,2}$. Since $f$ solves the Stein equation \eqref{steineq} for $h=h_z$ we have 
\begin{align}\label{e22}
E_{2,2}&=\frac{\alpha a^2}{\sigma^2}E\bigl[(1-J)D\bigl(W_Nf(W_N)-W_{N^s}f(W_{N^s})\bigr)\bigr]\notag\\
&\;+\frac{\alpha a^2}{\sigma^2}E\bigl[(1-J)D\bigl(1_{\{W_N\leq z\}}-1_{\{W_{N^s}\leq z\}}\bigr)\bigr]\notag\\
&=:E_{2,2,1}+E_{2,2,2}\,.
\end{align}
Using 
\[W_N=W_{N^s}-V\]
and Lemma \ref{indlemma}, we obtain from \eqref{aux23} that
\begin{align}\label{e222}
 \abs{E_{2,2,2}}&\leq\frac{\alpha a^2}{\sigma^2}E\bigl[1_{\{D<0\}}\abs{D}1_{\{z+(V\wedge0)<W_{N^s}\leq z+(V\vee0)\}}\bigr]\notag\\
 &\leq\frac{\alpha a^2b}{\sigma^2c\sqrt{2\pi}}E\bigl[1_{\{D<0\}}D^2(N^s)^{-1/2}\bigr]
+\frac{2C_\K  d^3 a^2\sqrt{\alpha}}{c^3\sigma^2}\sqrt{E\bigl[1_{\{D<0\}}D^2\bigr]}\notag\\
&=:B_4\,.
\end{align}
As to $E_{2,2,1}$, from \eqref{fzdiff} we have 
\begin{align}\label{e221}
 \abs{E_{2,2,1}}&\leq\frac{\alpha a^2}{\sigma^2}E\Bigl[(1-J)\abs{DV}\Bigl(\abs{W_{N^s}}+\frac{\sqrt{2\pi}}{4}\Bigr)\Bigr]
\end{align}
As
\begin{equation}\label{e22z1}
 E\bigl[\abs{V}\,\bigl|\,N,N^s\bigr]\leq\sqrt{E\bigl[V^2\,\bigl|\,N,N^s\bigr]}=\frac{1}{\sigma}\sqrt{c^2\abs{D}+a^2D^2}\leq\frac{b}{\sigma}\abs{D}\,,
\end{equation}
by conditioning, we see 
\begin{align}\label{e22z2}
 E\bigl[(1-J)\abs{DV}]&=E\Bigl[(1-J)\abs{D}E\bigl[\abs{V}\,\bigl|\,N,N^s\bigr]\Bigr]\leq \frac{b}{\sigma}E\bigl[(1-J)D^2]\,.
\end{align}
Now, using the fact that conditionally on $N^s$, the random variables $W_{N^s}$ and $(1-J)\abs{DV}$ are independent, as well as the Cauchy-Schwarz inequality, we conclude that 
\begin{align}\label{e22z3}
 E\babs{(1-J)DVW_{N^s}}&=E\Bigl[E\bigl[\babs{(1-J)DVW_{N^s}}\,\bigl|\,N^s\bigr]\Bigr]\notag\\
 &=E\Bigl[E\bigl[(1-J)\abs{DV}\,\bigl|\,N^s\bigr]E\bigl[\abs{W_{N^s}}\,\bigl|\,N^s\bigr]\Bigr]\notag\\
 &\leq\sqrt{E\Bigl[\bigl(E\bigl[(1-J)\abs{DV}\,\bigl|\,N^s\bigr]\bigr)^2\Bigr]}\sqrt{E\Bigl[\bigl(E\bigl[\abs{W_{N^s}}\,\bigl|\,N^s\bigr]\bigr)^2\Bigr]}\notag\\
 &\leq\frac{b}{\sigma}\sqrt{E\Bigl[\bigl(E\bigl[(1-J)D^2\,\bigl|\,N^s\bigr]\bigr)^2\Bigr]}\sqrt{E\Bigl[W_{N^s}^2\Bigr]}\,,
\end{align}
where we have used the conditional Jensen inequality, \eqref{e22z1} and 
\begin{equation*}
 E\bigl[(1-J)\abs{DV}\,\bigl|\,N^s\bigr]=E\Bigr[(1-J)\abs{D}E\bigl[\abs{V}\,\bigl|\,N,N^s\bigr]\,\Bigl|\,N^s\Bigr]
\end{equation*}
to obtain the last inequality. Using the defining relation \eqref{sbdef} of the size-biased distribution one can easily show that 
\begin{align}\label{secmomwns}
E\Bigl[W_{N^s}^2\Bigr]&=\frac{1}{\sigma^2}E\Bigl[c^2N^s+a^2(N^s-\alpha)^2\Bigr]
=\frac{c^2\beta^2+a^2\bigl(\delta^3-2\alpha\beta^2+\alpha^3\bigr)}{\alpha\sigma^2}\,,
\end{align}
which, together with \eqref{e221}, \eqref{e22z2} and \eqref{e22z3} yields that 
\begin{align}\label{e221b}
\abs{E_{2,2,1}}&\leq\frac{\alpha a^2b\sqrt{2\pi}}{4\sigma^3}E\bigl[1_{\{D<0\}}D^2] \notag\\
&\;+a^2b\frac{c^2\beta^2+a^2\bigl(\delta^3-2\alpha\beta^2+\alpha^3\bigr)}{\sigma^5}\sqrt{E\Bigl[\bigl(E\bigl[1_{\{D<0\}}D^2\,\bigl|\,N^s\bigr]\bigr)^2\Bigr]}\notag\\
&=:B_5\,.
\end{align}

It remains to bound the quantities $E_{2,3}$ and $E_{2,4}$ from \eqref{e2} for $f=f_z$. From \eqref{fztaylor} 
we have 
\begin{align}\label{r1}
\abs{E_{2,3}}&=\frac{\alpha \abs{a}}{\sigma}\Babs{E\bigl[1_{\{D\geq0\}}R_f(W,V)\bigr]}\notag\\
&\leq\frac{\alpha \abs{a}}{2\sigma}E\Bigl[JV^2\Bigl(\abs{W}+\frac{\sqrt{2\pi}}{4}\Bigr)\Bigr]\notag\\
&\;+\frac{\alpha \abs{a}}{\sigma}E\Bigl[J\abs{V}1_{\{z-(V\vee0)<W\leq z-(V\wedge0)\}}\Bigr]=:R_{1,1}+R_{1,2}\,.
\end{align}
Similarly to \eqref{formvsq} we obtain 
\begin{align}\label{boundjvsq}
 E\bigl[JV^2\bigr]&=\frac{1}{\sigma^2}\Bigl(c^2E\bigl[JD\bigr]+a^2E\bigl[JD^2\bigr]\Bigr)\notag\\
 &\leq\frac{b^2}{\sigma^2}E\bigl[JD^2\bigr]
\end{align}
from
\begin{align}\label{r1e1}
 E\bigl[JV^2\,\bigl|\,N,N^s\bigr]&=JE\bigl[V^2\,\bigl|\,N,N^s\bigr]=\frac{J}{\sigma^2}\Bigl(c^2\abs{D}+a^2D^2\Bigr)\notag\\
 &=\frac{1}{\sigma^2}\Bigl(c^2 JD+a^2JD^2\Bigr)\,.
 \end{align}
Also, recall that the random variables 
\[JV^2=\sigma^{-1}1_{\{N^s\geq N\}}\Bigl(\sum_{j=N+1}^{N^s}X_j\Bigr)^2\quad\text{and}\quad W_N=\sigma^{-1}\Bigl(\sum_{j=1}^N X_j-\alpha a\Bigr)\]
are conditionally independent given $N$. Hence, using the Cauchy-Schwarz inequality
\begin{align}\label{r1e2}
 E\bigl[JV^2\abs{W_N}\bigr]&=E\Bigl[E\bigl[JV^2\abs{W_N}\,\bigl|\,N\bigr]\Bigr]=E\Bigl[E\bigl[JV^2\,\bigl|\,N\bigr]E\bigl[\abs{W_N}\,\bigl|\,N\bigr]\Bigr]\notag\\
&\leq\sqrt{E\biggl[\Bigl(E\bigl[JV^2\,\bigl|\,N\bigr]\Bigr)^2\biggr]}\sqrt{E\biggl[\Bigl(E\bigl[\abs{W_N}\,\bigl|\,N\bigr]\Bigr)^2\biggr]}\,.
\end{align}
From \eqref{r1e1} and $D^2\geq\abs{D}$ we conclude that 
\begin{align}\label{r1e3}
 E\bigl[JV^2\,\bigl|\,N\bigr]=\frac{1}{\sigma^2}\Bigl(c^2E\bigl[JD\,\bigl|\,N\bigr]+a^2E\bigl[JD^2\,\bigl|\,N\bigr]\Bigr)\leq\frac{b^2}{\sigma^2}E\bigl[JD^2\,\bigl|\,N\bigr]\,.
\end{align}
Furthermore, by the conditional version of Jensen' s inequality 
we have
\begin{align}\label{r1e4}
 E\biggl[\Bigl(E\bigl[\abs{W_N}\,\bigl|\,N\bigr]\Bigr)^2\biggr]&\leq E\Bigl[E\bigl[W_N^2\,\bigl|\,N\bigr]\Bigr]
=E\bigl[W_N^2\bigr]=1\,.
\end{align}
Thus, from \eqref{r1e2}, \eqref{r1e3} and \eqref{r1e4} we see that 
\begin{equation}\label{r1e5}
 E\bigl[JV^2\abs{W_N}\bigr]\leq\frac{b^2}{\sigma^2}\sqrt{E\Bigl[\bigl(E\bigl[JD^2\,\bigl|\,N\bigr]\bigr)^2\Bigr]}\,.
\end{equation}
Hence, \eqref{r1}, \eqref{boundjvsq} and \eqref{r1e5} yield 
\begin{align}\label{r11}
 R_{1,1}&\leq\frac{\alpha \abs{a}b^2}{2\sigma^3}\sqrt{E\Bigl[\bigl(E\bigl[JD^2\,\bigl|\,N\bigr]\bigr)^2\Bigr]}
+\frac{\alpha \abs{a}b^2\sqrt{2\pi}}{8\sigma^3}E[JD^2]\,.
\end{align}
Finally, from \eqref{r1}, \eqref{r11} and \eqref{aux21} we get
\begin{align}\label{boundr1}
 \abs{E_{2,3}}&\leq\frac{\alpha \abs{a}b^2}{2\sigma^3}\sqrt{E\Bigl[\bigl(E\bigl[1_{\{D\geq0\}}D^2\,\bigl|\,N\bigr]\bigr)^2\Bigr]}
+\frac{\alpha \abs{a}b^2\sqrt{2\pi}}{8\sigma^3}E[1_{\{D\geq0\}}D^2]\notag\\
 &\;+\frac{\alpha \abs{a}b}{\sigma^2}\sqrt{P(N=0)}\sqrt{E[1_{\{D\geq0\}}D^2]}\notag\\
&\;+\frac{\alpha \abs{a}b^2}{c\sigma^2\sqrt{2\pi}}E\bigl[1_{\{D\geq0\}}D^21_{\{N\geq1\}}N^{-1/2}\bigr]+\frac{2C_\K d^3\alpha \abs{a} b}{\sigma^2} E\bigl[1_{\{D\geq0\}}D1_{\{N\geq1\}}N^{-1/2}\bigr]\notag\\
&=:B_6\,.
\end{align}
Similarly, we have
\begin{align}\label{r2}
\abs{E_{2,4}}&=\frac{\alpha \abs{a}}{\sigma}\Babs{E\bigl[1_{\{D<0\}}R_f(W_{N^s},-V)\bigr]}\notag\\
&\leq\frac{\alpha \abs{a}}{2\sigma}E\Bigl[(1-J)V^2\Bigl(W_{N^s}+\frac{\sqrt{2\pi}}{4}\Bigr)\Bigr]\notag\\
&\;+\frac{\alpha \abs{a}}{\sigma}E\Bigl[(1-J)\abs{V}1_{\{z+(V\wedge0)<W_{N^s}\leq z+(V\vee0)\}}\Bigr]=:R_{2,1}+R_{2,2}\,.
\end{align}
Analogously to the above we obtain
\begin{align}
 E\bigl[(1-J)V^2\bigr]&=\frac{1}{\sigma^2}\Bigl(c^2E\bigl[(1-J)\abs{D}\bigr]+a^2E\bigl[(1-J)D^2\bigr]\Bigr)\notag\\
 &\leq\frac{b^2}{\sigma^2}E\bigl[(1-J)D^2\bigr]\quad\text{and}\label{boundnjvsq}\\
E\bigl[(1-J)V^2\,\bigl|\,N^s\bigr]&=\frac{1}{\sigma^2}\Bigl(c^2E\bigl[(1-J)\abs{D}\,\bigl|\,N^s\bigr]+a^2E\bigl[(1-J)D^2\,\bigl|\,N^s\bigr]\Bigr)\notag\\
&\leq\frac{b^2}{\sigma^2}E\bigl[(1-J)D^2\,\bigl|\,N^s\bigr]\notag\,.
\end{align}
Using these as well as the conditional independence of $(1-J)V^2$ and $W_{N^s}$ given $N^s$, one has
\begin{equation}\label{r21}
 E\bigl[(1-J)V^2\abs{W_{N^s}}\bigr]\leq\frac{b^2}{\sigma^2}\sqrt{E\Bigl[\bigl(E\bigl[(1-J)D^2\,\bigl|\,N^s\bigr]\bigr)^2\Bigr]}
\sqrt{E\Bigl[W_{N^s}^2\Bigr]}\,.
\end{equation}
Combining \eqref{secmomwns} and \eqref{r21} we obtain
\begin{align}\label{r22}
E\bigl[(1-J)V^2\abs{W_{N^s}}\bigr]&\leq\frac{b^2}{\sigma^2}\sqrt{E\Bigl[\bigl(E\bigl[(1-J)D^2\,\bigl|\,N^s\bigr]\bigr)^2\Bigr]}\notag\\
&\quad\left(\frac{c^2\beta^2+a^2\bigl(\delta^3-2\alpha\beta^2+\alpha^3\bigr)}{\alpha\sigma^2}\right)^{1/2}\,.
\end{align}
Thus, from \eqref{r2}, \eqref{boundnjvsq}, \eqref{r22} and \eqref{aux22} we conclude
\begin{align}\label{boundr2}
\abs{E_{2,4}}&\leq\frac{\alpha\abs{a}b^2\sqrt{2\pi}}{8\sigma^3} E\bigl[1_{\{D<0\}}D^2\bigr]+\frac{\alpha\abs{a}b^2}{2\sigma^3}\sqrt{E\Bigl[\bigl(E\bigl[1_{\{D<0\}}D^2\,\bigl|\,N^s\bigr]\bigr)^2\Bigr]}\notag\\
&\quad\cdot\left(\frac{c^2\beta^2+a^2\bigl(\delta^3-2\alpha\beta^2+\alpha^3\bigr)}{\alpha\sigma^2}\right)^{1/2}\notag\\
&\;+\frac{\alpha\abs{a}b^2}{\sigma^2c\sqrt{2\pi}}E\bigl[1_{\{D<0\}}D^2(N^s)^{-1/2}\bigr]+\frac{\sqrt{\alpha}\abs{a}2C_\K b d^3}{\sigma^2}\sqrt{E\bigl[1_{\{D<0\}}D^2\bigr]}\notag\\
&=:B_7\,.
\end{align}
The Berry-Esseen bound stated in Remark \ref{mtrem} (b) follows from \eqref{esdec}, \eqref{e1dec}, \eqref{e1z7}, \eqref{e12gen}, \eqref{e2}, \eqref{e21}, \eqref{boundfzp}, \eqref{e22}, \eqref{e222}, \eqref{e221b}, \eqref{boundr1} and 
\eqref{boundr2}. This immediately yields the Berry-Esseen bound presented in Theorem \ref{maintheo} (b) because 
\[B_2=B_4=B_5=B_7=0\]
in this case. In order to obtain the Kolmogorov bound in Theorem \ref{meanzero}, again, we choose $M$ such that $M\geq N$ and use the bounds \eqref{e1z6} and \eqref{e12mz2} instead. The result then follows 
from \eqref{esdec} and \eqref{e1dec}.

\section{Proofs of auxiliary results}\label{Appendix}
Here, we give several rather technical proofs.  We start with the following easy lemma, whose proof is omitted.
\begin{lemma}\label{indlemma}
 For all $x,u,v,z\in\R$ we have 
 \begin{align*}
 1_{\{x+u\leq z\}}-1_{\{x+v\leq z\}}&=1_{\{z-v<x\leq z-u\}}-1_{\{z-u<x\leq z-v\}}\quad\text{and}\\
 \babs{1_{\{x+u\leq z\}}-1_{\{x+v\leq z\}}}&=1_{\{z- u\vee v<x\leq z- u\wedge v\}}\,.
\end{align*}
 \end{lemma}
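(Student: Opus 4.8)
The plan is to reduce everything to the elementary identity $1_{\{x+u\leq z\}}=1_{\{x\leq z-u\}}$ and then to compare the two half-lines $\{x\leq z-u\}$ and $\{x\leq z-v\}$ by a case distinction on the sign of $u-v$.

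First I would rewrite $1_{\{x+u\leq z\}}-1_{\{x+v\leq z\}}=1_{\{x\leq z-u\}}-1_{\{x\leq z-v\}}$. If $u\leq v$, then $z-v\leq z-u$, so $\{x\leq z-v\}\subseteq\{x\leq z-u\}$ and hence $1_{\{x\leq z-u\}}-1_{\{x\leq z-v\}}=1_{\{z-v<x\leq z-u\}}$, while the set $\{z-u<x\leq z-v\}$ is empty and contributes $0$; this is exactly the claimed first identity in this case. If $u>v$, the roles of $u$ and $v$ are interchanged: now $\{x\leq z-u\}\subseteq\{x\leq z-v\}$, so the difference equals $-1_{\{z-u<x\leq z-v\}}$ and $\{z-v<x\leq z-u\}=\emptyset$, again matching the first identity.

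For the second identity I would observe that in either case exactly one of the two indicator sets on the right-hand side of the first identity is nonempty and the other is empty, so these two sets are disjoint; consequently the absolute value of their difference equals the sum of the two indicators, i.e. $1_{\{z-v<x\leq z-u\}}+1_{\{z-u<x\leq z-v\}}$. Finally I would note that when $u\leq v$ this sum is $1_{\{z-v<x\leq z-u\}}=1_{\{z-u\vee v<x\leq z-u\wedge v\}}$, and when $u>v$ it is $1_{\{z-u<x\leq z-v\}}=1_{\{z-u\vee v<x\leq z-u\wedge v\}}$, which is the asserted formula.

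There is essentially no genuine obstacle here; the only point requiring care is keeping the weak-versus-strict inequalities consistent so that the resulting half-open intervals partition correctly and no endpoint is double-counted. This is precisely why the rewriting $1_{\{x+u\leq z\}}=1_{\{x\leq z-u\}}$ (weak inequality on the left, yielding intervals that are strict on the left and weak on the right) should be tracked explicitly through the case distinction.
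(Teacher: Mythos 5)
Your argument is correct: after rewriting $1_{\{x+u\leq z\}}=1_{\{x\leq z-u\}}$, the case distinction on the sign of $u-v$ verifies both identities, and your bookkeeping of the strict/weak inequalities in the half-open intervals is accurate. The paper omits the proof of this lemma as elementary, and your verification is exactly the routine argument one would expect, so there is nothing to compare beyond noting it is sound.
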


\begin{lemma}[Concentration inequality]\label{ce1}
For all real $t<u$ and for all $n\geq1$ we have
\begin{equation*}
 P(t<W_n\leq u)\leq \frac{\sigma(u-t)}{c\sqrt{2\pi}\sqrt{n}}+\frac{2 C_\K d^3}{c^3\sqrt{n}}\,.
\end{equation*}
\end{lemma}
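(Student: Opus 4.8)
The plan is to reduce the statement to the classical Berry--Esseen theorem by an affine standardisation of the partial sum $S_n$. Since the $X_j$ are i.i.d.\ with mean $a$, variance $c^2>0$ and $E\abs{X_1-a}^3=d^3<\infty$, I would set $\tilde S_n:=(S_n-na)/(c\sqrt n)$, which has mean $0$ and variance $1$, and denote by $F_n$ its distribution function. The Berry--Esseen theorem (valid for every $n\geq1$) then yields
\[\fnorm{F_n-\Phi}\leq\frac{C_\K\,E\abs{X_1-a}^3}{c^3\sqrt n}=\frac{C_\K d^3}{c^3\sqrt n}\,.\]

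Next I would rewrite the event $\{t<W_n\leq u\}$ in terms of $\tilde S_n$. From $W_n=(S_n-\alpha a)/\sigma$ and the identity $S_n-\alpha a=c\sqrt n\,\tilde S_n+(n-\alpha)a$ one gets $\{t<W_n\leq u\}=\{t'<\tilde S_n\leq u'\}$, where
\[t':=\frac{\sigma t-(n-\alpha)a}{c\sqrt n}\,,\qquad u':=\frac{\sigma u-(n-\alpha)a}{c\sqrt n}\,,\]
so that in particular $u'-t'=\sigma(u-t)/(c\sqrt n)$. Consequently
\[P(t<W_n\leq u)=F_n(u')-F_n(t')=\bigl(\Phi(u')-\Phi(t')\bigr)+\bigl(F_n(u')-\Phi(u')\bigr)-\bigl(F_n(t')-\Phi(t')\bigr)\,.\]

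Finally I would bound the three summands separately: the last two are each at most $C_\K d^3/(c^3\sqrt n)$ in absolute value by the displayed Berry--Esseen bound, and for the first I would use that the standard normal density satisfies $\phi\leq1/\sqrt{2\pi}$, so that $\Phi(u')-\Phi(t')=\int_{t'}^{u'}\phi(x)\,dx\leq(u'-t')/\sqrt{2\pi}=\sigma(u-t)/(c\sqrt{2\pi}\sqrt n)$. Adding the three contributions gives exactly the asserted bound. There is essentially no obstacle here; the only point requiring care is the affine change of variables, namely that the location shift by $(n-\alpha)a/\sigma$ is harmless because the Berry--Esseen estimate is uniform in the point at which the distribution functions are compared, while the interval length is scaled precisely by the factor $\sigma/(c\sqrt n)$.
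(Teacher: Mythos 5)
Your proposal is correct and follows essentially the same route as the paper: the paper's auxiliary inequality $P(r<X\leq s)\leq (s-r)/\sqrt{2\pi}+2d_\K(X,Z)$ applied to the standardized sum $(S_n-na)/(c\sqrt{n})$ is exactly your three-term decomposition into the normal increment plus two Berry--Esseen errors, after the same affine change of variables.
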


\begin{proof}
The proof uses the Berry-Esseen Theorem for sums of i.i.d. random variables with finite third moment as well as the following fact, whose proof is straightforward: For each real-valued random variable $X$ and for all real $r<s$ we have the 
bound
\begin{equation}\label{ce1eq1}
 P(r<X\leq s)\leq\frac{s-r}{\sqrt{2\pi}}+2d_\K(X,Z)\,.
\end{equation}
A similar result was used in \cite{PekRol11} in the framework of exponential approximation. Now, for given $t<u$ and $n\geq 1$ by \eqref{ce1eq1} and the Berry-Esseen Theorem we have 
\begin{align*}
 P(t<W_n\leq u)&=P\biggl(\frac{\sigma t+a(\alpha-n)}{c\sqrt{n}}<\frac{S_n-na}{c\sqrt{n}}\leq \frac{\sigma u+a(\alpha-n)}{c\sqrt{n}}\biggr)\notag\\
 &\leq\frac{\sigma(u-t)}{c\sqrt{2\pi}\sqrt{n}}+\frac{2C_\K d^3}{c^3\sqrt{n}}\,.
\end{align*}
\end{proof}

\begin{remark}
It is actually not strictly necessary to apply the Berry-Esseen Theorem in order to prove Lemma \ref{ce1}: Using known concentration results for sums of independent random variables like 
Proposition 3.1 from \cite{CGS}, for instance, would yield a comparable result, albeit with worse constants. 
\end{remark}

In order to prove Lemma \ref{ce2} we cite the following concentration inequality from \cite{CGS}:
\begin{lemma}\label{cecgs}
Let $Y_1,\dotsc,Y_n$ be independent mean zero random variables such that 
\[\sum_{j=1}^n E[Y_j^2]=1\quad\text{and}\quad\zeta:=\sum_{j=1}^n E\abs{Y_j}^3<\infty\,,\]
then with $S^{(i)}:=\sum_{j\not= i}Y_j$ one has for all real $r<s$ and all $i=1,\dotsc,n$ that
\begin{equation*}
P(r\leq S^{(i)}\leq s)\leq\sqrt{2}(s-r)+2(\sqrt{2}+1)\zeta\,. 
\end{equation*}
\end{lemma}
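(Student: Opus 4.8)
The plan is to deduce this inequality, which is quoted from \cite{CGS}, from the Berry-Esseen theorem together with the elementary estimate \eqref{ce1eq1}, in the same spirit as the proof of Lemma \ref{ce1}; a preliminary case distinction on the size of $E[Y_i^2]$ will dispose of the degenerate situations (including $n=1$, where $S^{(i)}=0$ and $\hat\sigma$ below would vanish).

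First I would handle the case $E[Y_i^2]>1/2$ by hand. By Jensen's inequality $E\abs{Y_i}^3\geq\bigl(E[Y_i^2]\bigr)^{3/2}>2^{-3/2}$, whence $2(\sqrt2+1)\zeta\geq 2(\sqrt2+1)E\abs{Y_i}^3>2(\sqrt2+1)2^{-3/2}=1+1/\sqrt2>1\geq P(r\leq S^{(i)}\leq s)$, so the asserted bound is trivially true. Since $n=1$ forces $E[Y_1^2]=1>1/2$, this also settles $n=1$, and from now on one may assume $n\geq2$ and $E[Y_i^2]\leq1/2$.

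In this main case put $\hat\sigma^2:=\Var\bigl(S^{(i)}\bigr)=\sum_{j\neq i}E[Y_j^2]=1-E[Y_i^2]\geq1/2$ and $\hat\zeta:=\sum_{j\neq i}E\abs{Y_j}^3\leq\zeta$; note $\hat\sigma^2>0$. Since $S^{(i)}/\hat\sigma$ is a normalized sum of $n-1\geq1$ independent, centered random variables, the Berry-Esseen theorem (in its version for not necessarily identically distributed summands, with an absolute constant $C_0$; in the i.i.d.\ case $C_0=C_\K\leq0.4748$ by \cite{Shev11}, and $C_0<0.6$ in general) gives $d_\K\bigl(S^{(i)}/\hat\sigma,Z\bigr)\leq C_0\hat\zeta\hat\sigma^{-3}$. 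Applying \eqref{ce1eq1} to $X=S^{(i)}/\hat\sigma$ on the interval $\bigl((r-\eta)/\hat\sigma,\,s/\hat\sigma\bigr]$ and letting $\eta\downarrow0$ then yields $P\bigl(r\leq S^{(i)}\leq s\bigr)\leq (s-r)\bigl(\hat\sigma\sqrt{2\pi}\bigr)^{-1}+2C_0\hat\zeta\hat\sigma^{-3}$, and substituting $\hat\sigma^{-1}\leq\sqrt2$, $\hat\zeta\leq\zeta$ and the numerical inequalities $1/\sqrt{\pi}\leq\sqrt2$ and $4\sqrt2\,C_0\leq2(\sqrt2+1)$ produces exactly $\sqrt2(s-r)+2(\sqrt2+1)\zeta$.

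There is no real obstacle along this route; the only points needing a little care are citing the Berry-Esseen constant in the non-i.i.d.\ setting and checking the final numerical inequalities. For completeness I would also recall that \cite{CGS} proves this by Stein's method directly: with the clipped linear test function $f(w)=\bigl((w-\tfrac{r+s}{2})\wedge\tfrac{s-r}{2}\bigr)\vee\bigl(-\tfrac{s-r}{2}\bigr)$ one bounds $E\bigl[S^{(i)}f(S^{(i)})\bigr]$ above by $\tfrac{s-r}{2}E\abs{S^{(i)}}\leq\tfrac{s-r}{2}$, and below, after a leave-one-more-out expansion using $E\bigl[Y_jf(S^{(i)}-Y_j)\bigr]=0$ and the fundamental theorem of calculus, by a multiple of $P(r\leq S^{(i)}\leq s)$ minus boundary terms $P\bigl(S^{(i)}\in[r,r+\abs{Y_j}]\bigr)$; the recursive control of these boundary terms is the technical heart of that argument, with the third-moment sum $\zeta$ absorbing all the errors.
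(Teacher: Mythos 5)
Your argument is correct, but it does not follow the paper's route: the paper gives no proof of this lemma at all, simply citing it as a known concentration inequality from \cite{CGS}, where it is established by the Stein-method argument you sketch at the end (the clipped linear test function, the leave-one-out expansion using $E\bigl[Y_jf(S^{(i)}-Y_j)\bigr]=0$, and recursive control of the boundary terms). Your derivation instead combines the Berry--Esseen theorem for non-identically distributed summands with the elementary estimate \eqref{ce1eq1}, exactly in the spirit of the paper's proof of Lemma \ref{ce1}; the case split at $E[Y_i^2]>1/2$ correctly disposes of the degenerate situations via $E\abs{Y_i}^3\geq\bigl(E[Y_i^2]\bigr)^{3/2}$, and in the main case the numerics check out: one needs $4\sqrt{2}\,C_0\leq 2(\sqrt{2}+1)$, i.e. $C_0\leq(2+\sqrt{2})/4\approx0.85$, which holds comfortably for any of the known Berry--Esseen constants in the non-i.i.d.\ setting (even the classical value $0.7915$ suffices, so the exact choice of $C_0$ is uncritical), and the closed-interval endpoint is handled properly by the limiting argument $\eta\downarrow0$. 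The trade-off between the two routes is the one the paper itself points out in the remark following Lemma \ref{ce1}, but in reverse: your proof is short and piggybacks on machinery already present in the paper, at the cost of invoking the Berry--Esseen theorem with an explicit constant for non-identically distributed summands, whereas the proof in \cite{CGS} is self-contained within Stein's method and needs no external quantitative CLT, which is why concentration inequalities of this type are usually proved that way. Either way the constants $\sqrt{2}$ and $2(\sqrt{2}+1)$ of the stated lemma are recovered, so your proposal is a valid independent verification of the quoted result.
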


\begin{proof}[Proof of Lemma \ref{ce2}]
We first prove \eqref{cers1}. Define 
\begin{equation*}
W_{N^s}^{(1)}:=W_{N^s}-\sigma^{-1}X_1=\frac{1}{\sigma}\Bigl(\sum_{j=2}^{N^s} X_j-\alpha a\Bigr) 
\end{equation*}
such that 
\begin{equation*}
 W_{N^s}=W_{N^s}^{(1)}+\sigma^{-1}X_1\quad\text{and}\quad W^*=W_{N^s}^{(1)}+\sigma^{-1}Y\,.
\end{equation*}
Then, using Lemma \ref{indlemma} we have 
\begin{align*}
&\:\babs{P(W^*\leq z)-P(W_{N^s}\leq z)}\notag\\
&=\babs{P(W_{N^s}^{(1)}+\sigma^{-1}Y\leq z)-P(W_{N^s}^{(1)}+\sigma^{-1}X_1\leq z)}\notag\\
&\leq P\bigl(z-\sigma^{-1} (X_1\vee Y)<W_{N^s}^{(1)}\leq z-\sigma^{-1} (X_1\wedge Y)\bigr)\notag\\
&=E\Biggl[P\biggl(\frac{\sigma z- (X_1\vee Y)+a(\alpha-N^s+1)}{c\sqrt{N^s}}<\sum_{j=2}^{N^s}\Bigl(\frac{X_j-a}{c\sqrt{N^s}}\Bigr)\notag\\ 
&\hspace{3cm}\leq \frac{\sigma z- (X_1\wedge Y)+a(\alpha-N^s+1)}{c\sqrt{N^s}}\,\biggl|\,N^s\biggr)\Biggr]\,.
\end{align*}
Now note that conditionally on $N^s$ the random variables $W_{N^s}^{(1)}$ and $(X_1,Y)$ are independent and that the statement of Lemma \ref{cecgs} may be applied to the random variable in the middle term of the above conditional probabilty 
giving the bound 
\begin{align}\label{ce22}
&\:\babs{P(W^*\leq z)-P(W_{N^s}\leq z)}
\leq E\biggl[\frac{\sqrt{2}\abs{Y-X_1}}{c\sqrt{N^s}}+\frac{2(\sqrt{2}+1) d^3}{c^3\sqrt{N^s}}\biggr]\,.
\end{align}
Noting that $(X_1,Y)$ and $N^s$ are independent and using \eqref{zbdiff} again, we obtain 
\begin{equation}\label{ce23}
 E\biggl[\frac{\abs{Y-X_1}}{\sqrt{N^s}}\biggr]\leq\frac{3}{2c^2} d^3 E\bigl[(N^s)^{-1/2}\bigr]\leq \frac{3 d^3}{2c^2\sqrt{\alpha}}\,, 
\end{equation}
as
\begin{equation}\label{nsexp1}
 E\bigl[(N^s)^{-1/2}\bigr]=\frac{E[\sqrt{N}]}{E[N]}\leq\frac{\sqrt{E[N]}}{E[N]}=\frac{1}{\sqrt{\alpha}}
\end{equation}
by \eqref{sbdef} and Jensen's inequality. From \eqref{ce22}, \eqref{ce23} and \eqref{nsexp1} the bound \eqref{cers1} follows.\\
Next we prove \eqref{cers2}.
Using Lemma \ref{indlemma} we obtain 
\begin{align}\label{ce24}
\babs{P(W_{N^s}\leq z)-P(W\leq z)}&=\babs{E\bigl[J(1_{\{W+V\leq z\}}-1_{\{W\leq z\}})\bigr]\notag\\
&\;-E\bigl[(1-J)(1_{\{W_{N^s}-V\leq z\}}-1_{\{W_{N^s}\leq z\}})\bigr]}\notag\\
&\leq E\bigl[J1_{\{z-(V\vee0)<W\leq z-(V\wedge0)\}}\bigr]\notag\\
&\;+E\bigl[(1-J)1_{\{z+(V\wedge0)<W_{N^s}\leq z+(V\vee0)\}}\bigr]\notag\\
&=:A_1+A_2\,.
\end{align}
To bound $A_1$ we write 
\begin{align}\label{ce25}
 A_1&=\sum_{n=0}^\infty E\bigl[J1_{\{N=n\}}1_{\{z-(V\vee0)<W\leq z-(V\wedge0)\}}\bigr]\notag\\
 &=\sum_{n=0}^\infty P\bigl(z-(V\vee0)<W\leq z-(V\wedge0)\,\bigl|\,D\geq0,N=n\bigr)\cdot P\bigl(D\geq0,N=n\bigr)\,.
\end{align}
Now note that conditionally on the event that $D\geq0$ and $N=n$ the random variables $W$ and $V$ are independent and 
\begin{equation*}
 \calL(W\,|\,D\geq0,N=n)=\calL(W_n)\,.
\end{equation*}
Thus, using Lemma \ref{ce1} we have for all $n\geq1$:
\begin{align}\label{ce26}
 &\:P\bigl(z-(V\vee0)<W\leq z-(V\wedge0)\,\bigl|\,D\geq0,N=n\bigr)\notag\\
 &=P\bigl(z-(V\vee0)<W_n\leq z-(V\wedge0)\,\bigl|\,D\geq0,N=n\bigr)\notag\\
 &\leq E\biggl[\frac{\sigma\abs{V}}{c\sqrt{2\pi}\sqrt{n}}+\frac{2C_\K d^3}{c^3\sqrt{n}}\,\biggl|\,D\geq0,N=n\biggr]
\end{align}
From \eqref{ce25} and \eqref{ce26} we thus have 
\begin{align}\label{ce27}
 A_1&\leq P(N=0)+\sum_{n=1}^\infty E\biggl[\frac{\sigma\abs{V}}{c\sqrt{2\pi}\sqrt{n}}+\frac{2C_\K d^3}{c^3\sqrt{n}}\,\biggl|\,D\geq0,N=n\biggr]P\bigl(D\geq0,N=n\bigr)\notag\\
 &= P(N=0)+\sum_{n=1}^\infty E\biggl[1_{\{D\geq0,N=n\}}\biggl(\frac{\sigma\abs{V}}{c\sqrt{2\pi}\sqrt{n}}+\frac{2C_\K d^3}{c^3\sqrt{n}}\biggr)\biggr]\notag\\
 &=P(N=0)+E\biggl[J1_{\{N\geq1\}}\biggl(\frac{\sigma\abs{V}}{c\sqrt{2\pi}\sqrt{N}}+\frac{2C_\K d^3}{c^3\sqrt{N}}\biggr)\biggr]
\end{align}
Now note that 
\begin{align}
 E\bigl[J1_{\{N\geq1\}}\abs{V}N^{-1/2}\bigr]&=E\Bigl[J1_{\{N\geq1\}}N^{-1/2}E\bigl[\abs{V}\,\bigl|\,N,N^s\bigr]\Bigr]\notag\\
 &\leq E\Bigl[J1_{\{N\geq1\}}N^{-1/2}\sqrt{E\bigl[V^2\,\bigl|\,N,N^s\bigr]}\Bigr]\notag\\
 &=\frac{1}{\sigma}E\Bigl[J1_{\{N\geq1\}}N^{-1/2}\sqrt{c^2D+a^2D^2}\Bigr]\label{ce28mz}\\
 &\leq\frac{b}{\sigma}E\bigl[JD1_{\{N\geq1\}}N^{-1/2}\bigr]\label{ce28}\,.
\end{align}
It remains to bound $A_2$. We may assume that $P(D<0)>0$ since otherwise $A_2=0$. Noting that $N^s\geq1$ almost surely, similarly to \eqref{ce25} we obtain
\begin{align*}
A_2&=\sum_{m=1}^\infty P\bigl(z+(V\wedge0)<W_{N^s}\leq z+(V\vee0)\,\bigl|\,D<0,N^s=m\bigr)\notag\\
&\qquad\cdot P\bigl(D<0,N^s=m\bigr)\,.
\end{align*}
Now, using the fact that conditionally on the event $\{N^s=m\}\cap\{D<0\}$ the random variables $W_{N^s}$ and $V$ are independent and 
\begin{equation*}
\calL(W_{N^s}|N^s=m,D<0)=\calL(W_m)
\end{equation*}
in the same manner as \eqref{ce27} we find
\begin{equation}\label{ce211}
A_2\leq E\biggl[(1-J)\biggl(\frac{\sigma\abs{V}}{c\sqrt{2\pi}\sqrt{N^s}}+\frac{2C_\K d^3}{c^3\sqrt{N^s}}\biggr)\biggr]\,.
\end{equation}
Using \eqref{sbdef} we have 
\begin{equation}\label{nsexp2}
E\bigl[\bigl(N^s\bigr)^{-1}\bigr]=\frac{1}{E[N]}=\frac{1}{\alpha}\,.
\end{equation}

Thus, from the Cauchy-Schwarz inequality and \eqref{nsexp2} we obtain
\begin{align}
E\Bigl[(1-J)\frac{\abs{V}}{\sqrt{N^s}}\Bigr]&\leq\sqrt{E\bigl[\bigl(N^s\bigr)^{-1}\bigr]}\sqrt{E\bigl[(1-J)V^2\bigr]}\notag\\
&=\frac{1}{\sigma\sqrt{\alpha}}\sqrt{c^2 E\bigl[\abs{D}(1-J)\bigr]+a^2E\bigl[D^2(1-J)\bigr]}\notag\\
&\leq\frac{b}{\sigma\sqrt{\alpha}}\sqrt{E\bigl[D^2(1-J)\bigr]}\label{ce213}\,.
\end{align}
Similarly, we have
\begin{align}\label{ce14}
E\Bigl[\frac{1-J}{\sqrt{N^s}}\Bigr]&\leq\sqrt{P(D<0)}\sqrt{E\bigl[\bigl(N^s\bigr)^{-1}\bigr]}
=\frac{\sqrt{P(D<0)}}{\sqrt{\alpha}}\,.
\end{align}
Thus, from \eqref{ce27}, \eqref{ce28}, \eqref{ce211}, \eqref{ce213} and \eqref{ce14} we see that $A_1+A_2$ is bounded from above 
by the right hand side of \eqref{cers2}. Using \eqref{ce27} and \eqref{ce28mz} instead gives the bounds \eqref{cers3} and \eqref{cers4}. \\
\end{proof}

\begin{proof}[Proof of Lemma \ref{auxlemma2}]
We only prove \eqref{aux21}, the proofs of \eqref{aux22} and \eqref{aux23} being similar and easier. 
By the definition of conditional expectation given an event, we have 
\begin{align}\label{al1}
 &\quad E\bigl[J\abs{V}1_{\{z-(V\vee0)<W_N\leq z-(V\wedge0)\}}\bigr]\notag\\
 &=\sum_{n=0}^\infty E\bigl[1_{\{N=n,D\geq0\}}\abs{V}1_{\{z-(V\vee0)<W_n\leq z-(V\wedge0)\}}\bigr]=E\bigl[1_{\{N=0\}}J\abs{V}\bigl]\notag\\
 &\quad+\sum_{n=1}^\infty E\bigl[\abs{V}1_{\{z-(V\vee0)<W_n\leq z-(V\wedge0)\}}\,\bigl|\,N=n,D\geq0\bigr]\cdot P(N=n,D\geq0)\,.
\end{align}
Now, for $n\geq1$, using the fact that the random variables $W_N$ and $V$ are conditionally independent given the event $\{D\geq0\}\cap\{N=n\}$, from Lemma \ref{ce1} we infer that
\begin{align}\label{al2}
 &\quad E\bigl[\abs{V}1_{\{z-(V\vee0)<W_n\leq z-(V\wedge0)\}}\,\bigl|\,N=n,D\geq0\bigr]\notag\\
 &=E\Bigl[\abs{V}\Bigl(\frac{\sigma\abs{V}}{c\sqrt{2\pi}\sqrt{n}}+\frac{2 C_\K d^3}{c^3\sqrt{n}}\Bigr)\,\Bigl|\,N=n,D\geq0\Bigr]
\end{align}
Combining \eqref{al1} and \eqref{al2} we get
\begin{align}\label{al3}
 &\quad E\bigl[J\abs{V}1_{\{z-(V\vee0)<W_N\leq z-(V\wedge0)\}}\bigr] \leq E\bigl[1_{\{N=0\}}J\abs{V}\bigl]\notag\\
 &\quad+\sum_{n=1}^\infty E\Bigl[\abs{V}\Bigl(\frac{\sigma\abs{V}}{c\sqrt{2\pi}\sqrt{n}}+\frac{2 C_\K d^3}{c^3\sqrt{n}}\Bigr)\,\Bigl|\,N=n,D\geq0\Bigr]\cdot P(N=n,D\geq0)\notag\\
 &=E\bigl[1_{\{N=0\}}J\abs{V}\bigl]+E\Bigl[1_{\{N\geq1\}}J\abs{V}\Bigl(\frac{\sigma\abs{V}}{c\sqrt{2\pi}\sqrt{N}}+\frac{2 C_\K d^3}{c^3\sqrt{N}}\Bigr)\Bigr]\,.
\end{align}
Using Cauchy-Schwarz as well as
\begin{equation*}
 E\bigl[JV^2\bigr]=E\Bigl[JE\bigl[V^2\,\bigl|\,N,N^s\bigr]\Bigr]\leq\frac{b^2}{\sigma^2}E\bigl[JD^2\bigr]
\end{equation*}
we obtain 
\begin{equation}\label{al4}
 E\bigl[1_{\{N=0\}}J\abs{V}\bigl]\leq\frac{b}{\sigma}\sqrt{P(N=0)}\sqrt{E\bigl[JD^2\bigr]}\,.
\end{equation}
Analogously to \eqref{ce28} one can show that 
\begin{equation}\label{al5}
E\bigl[1_{\{N\geq1\}}N^{-1/2}JV^2\bigr]\leq \frac{b^2}{\sigma^2}E\bigl[1_{\{N\geq1\}}N^{-1/2}JD^2\bigr]\,.
\end{equation}
Hence, bound \eqref{aux21} follows from \eqref{al3}, \eqref{al4}, \eqref{ce28} and \eqref{al5}.\\ 
\end{proof}

\section*{Acknowledgements}
The author would like to thank an anonymous referee for useful comments and suggestions concerning the presentation of this work.

\normalem
\bibliography{rs15v1}{}
\bibliographystyle{alpha}

\end{document}